\documentclass[12pt]{amsart}

\usepackage[hmargin=0.8in,height=8.6in]{geometry}
\usepackage{amssymb,amsthm, times}
\usepackage{delarray,verbatim}
\usepackage{ifpdf}
\ifpdf
\usepackage[pdftex]{graphicx}
\DeclareGraphicsRule{*}{mps}{*}{} \else
\usepackage[dvips]{graphicx}
\DeclareGraphicsRule{*}{eps}{*}{} \fi

\usepackage{bm}

\linespread{1.20}

\usepackage{ifpdf}
\usepackage{color}
\definecolor{webgreen}{rgb}{0,.5,0}
\definecolor{webbrown}{rgb}{.8,0,0}
\definecolor{emphcolor}{rgb}{0.95,0.95,0.95}

\usepackage{hyperref}
\hypersetup{%
%          draft,   %to suppress all hypertext options
          colorlinks=true,
          linkcolor=webbrown,
          filecolor=webbrown,
          citecolor=webgreen,
          breaklinks=true}
\ifpdf \hypersetup{pdftex,
%             pdftitle={Decision Making with Poisson process},
%             pdfauthor={Semih Sezer},
            pdfstartview=FitH, %%Fit, FitB, FitH
            bookmarksopen=true,
            bookmarksnumbered=true
} \else \hypersetup{dvips} \fi

\linespread{1.2}

\newcommand{\ud}{{\rm d}}

\numberwithin{equation}{section}

\newtheorem{theorem}{Theorem}[section]
\newtheorem{proposition}{Proposition}[section]
\newtheorem{corollary}{Corollary}[section]
\newtheorem{remark}{Remark}[section]
\newtheorem{lemma}{Lemma}[section]

\newtheorem{claim}{Claim}[section]
\numberwithin{remark}{section} \numberwithin{proposition}{section}
\numberwithin{corollary}{section}
\newcommand {\R}{\mathbb{R}}

\newcommand {\p}{\mathbb{P}}

\newcommand {\E}{\mathbb{E}}

\newcommand{\diff}{{\rm d}}

\newcommand{\lev}{L\'{e}vy }

\newcommand{\e}{\mathbb{E}}

\title[Refracted-reflected Spectrally Negative L\'evy processes]{On the refracted-reflected Spectrally Negative L\'evy processes}
\thanks{This version: \today. }
\author[J. L. P\'erez]{Jos\'e-Luis P\'erez}
\address[J. L. P\'erez]{Department of Probability and Statistics, Centro de Investigaci\'on en Matem\'aticas A.C. Calle Jalisco s/n. C.P. 36240, Guanajuato, Mexico.}
\email{jluis.garmendia@cimat.mx }
\author[K. Yamazaki]{Kazutoshi Yamazaki}
\address[K. Yamazaki]{Department of Mathematics,
Faculty of Engineering Science, Kansai University, 3-3-35 Yamate-cho, Suita-shi, Osaka 564-8680, Japan.}
\email{kyamazak@kansai-u.ac.jp}
\date{}
\begin{document}

\begin{abstract}
We study  a combination of the refracted and reflected \lev processes. Given a spectrally negative \lev process and two boundaries, it is reflected at the lower boundary while, 
whenever it is above the upper boundary, a linear drift at a constant rate is subtracted from the increments of the process. Using the scale functions, we compute the resolvent measure, the Laplace transform of the occupation times as well as other fluctuation identities that will be useful in applied probability including insurance, queues, and inventory management.

\noindent \small{\textbf{Key words:}  
 \lev processes; fluctuation theory; scale functions; insurance risk.  \\
\noindent  AMS 2010 Subject Classifications: 60G51, 91B30, 90B22}\\
\end{abstract}

\maketitle

\section{Introduction}

In this paper, we study what we call the \emph{refracted-reflected} spectrally negative \lev process. Its dynamics can be understood as follows: given a spectrally negative \lev process and two boundaries, we reflect the path of the process at the lower boundary while, whenever it is above the upper boundary, a linear drift at a constant rate is subtracted from the increments of the process.
%Given a spectrally negative \lev process and two boundaries, it is reflected at the lower boundary while, whenever it is above the upper boundary, a linear drift at a constant rate is subtracted from the increments of the process.

This process can be seen as a combination of a L\'evy process reflected at the lower boundary and the refracted \lev process, introduced by Kyprianou and Loeffen \cite{KL}, with the upper boundary being the refraction trigger level. 
%the process that is reflected at the lower boundary and the refracted \lev process of Kyprianou and Loeffen in \cite{KL} with the upper boundary being the refraction trigger level.  
The former is well-studied and is known to be expressed as the difference between the underlying \lev process and its running infimum process.  The latter moves like the original process below a fixed level while it behaves like a drift-changed process above it. Various fluctuation identities have been developed for reflected and refracted L\'evy processes via the use of scale functions. We refer the reader to \cite{APP2007,P2004,P2007} and \cite{KL, KPP} for a review on the study of the former and the latter, respectively.

Our study of the refracted-reflected L\'evy process is motivated by its potential applications in applied probability, as exemplified by insurance mathematics and queueing theory.

In insurance, the classical Cram\'er-Lundberg model uses a compound Poisson process with negative jumps as the surplus of an insurance company. Nevertheless, very recent studies motivated by insurance risk have seen preference in working with a general spectrally negative \lev process, partly thanks to the development of the fluctuation theory of L\'evy processes and scale functions. See for example \cite{APP2007,F1998,HPSV2004a,HPSV2004b,KKM2004,KK2006,KP2007,R2014,RZ2007,SV2007}. 
%and is now being replaced with a more general spectrally negative \lev process thanks to the development of the fluctuation theory and scale functions (see TODO).  
In particular, in \emph{de Finetti's optimal dividend problem}, an insurance company aims to maximize the expected net present value (NPV) of the total
	dividends paid until ruin.  It is shown, under a certain condition (see \cite{Loeffen}), that the barrier strategy is optimal and the resulting controlled surplus process becomes the \lev process reflected at an upper boundary; see, among others, \cite{APP2007}  and \cite{Loeffen}.

To this classical setting, there are two existing extensions. First, in the \emph{bail-out} model, it is assumed that the capital must be injected to prevent the surplus process from going below zero.  In this setting, Avram et al. \cite{APP2007} showed that it is optimal to reflect at zero and at some upper boundary, with the optimally controlled process being the doubly reflected \lev process of \cite{Pistorius_2003}. Second, under the restriction that the rate at which the dividends are paid is bounded and, instead,  absolutely continuous, Kyprianou et al.\ \cite{KLP} showed that, if the L\'evy measure has a completely monotone density,  it is optimal to pay dividends at the maximal rate as long as the surplus is above some fixed level.
%when the rate of dividend must be bounded and is absolutely continuous with respect to the \lev measure, Kyprianou et al.\ \cite{KLP} showed that it is optimal to refract the process in the sense that the maximal rate of dividend is paid as long as the surplus is above some level; 
The optimally controlled process becomes then the refracted \lev process.  
Naturally, it is of great interest to think of a joint problem with both capital injection and an absolutely continuous assumption; a refracted-reflected \lev process is an obvious candidate for the optimally controlled process. %Toward this end, it is important to study the fluctuation identities of this process.

In queues and inventory management, a \lev process is also a main tool in modeling. As an important characteristic of queues and inventories, they have a lower bound at zero, and hence it is common to model them as those reflected at zero.  As a more realistic model, \emph{queues with abandonments} take into account the well-studied phenomenon that a customer is not patient enough to line up if the queue is too long; consequently the rate of  increments of a queue decreases when the current level is high. On the other hand in inventory management, the rate of replenishment must be decreased when the  inventory level is high; this is due to the fact that there is a limited capacity of inventory and it is necessary to reduce future unsalable stock.
% because there is a limited capacity of inventory and it is necessary to reduce potential unsalable stock. 
  In these settings, refracted-reflected L\'evy processes are again a natural choice to model these processes.% \blue{In these settings, refracted-reflected L\'evy processes are again natural models \red{a process is not a model, right?' I guess we need to rephrase}.[To avoid using processes two times.]}%refracted-reflected process is a natural option to model these processes.

In this paper, we construct and study the refracted-reflected process when the underlying process is a spectrally negative \lev process. Our objective is to compute several fluctuation identities including the resolvent measure, the expected NPV of capital injection  (in the insurance context described above) and the Laplace transform of the occupation time above and below the level of refraction.

Given that this type of processes constitute an extension of the refracted case, we apply and adopt several techniques used in \cite{KL}.  We shall first consider the bounded variation case, and then by an approximation scheme we study the unbounded variation case. In order to avoid complicated expressions in terms of the \lev measure, we obtain several formulae to simplify the expressions of the obtained fluctuation identities.

Our challenges are mainly caused by the negative jumps.  There are essentially three regions that determine the movement of the process: (1) the refraction region above the upper boundary, (2) the reflection region below the lower boundary, and (3) the waiting region between these. Due to negative jumps, the process can potentially jump from the refraction region to any of the other remaining regions.
% it can potentially jump from refraction region to the other two regions.  
We shall show, nevertheless, that the fluctuation identities can be obtained and can be simplified by the formulae discussed above.

%for a review  on the reflected and refracted \lev processes. 

The rest of the paper is organized as follows.  Section \ref{section_process}  gives a construction of the refracted-reflected process. Section  \ref{section_scale_functions} reviews the theory of scale functions and develops new simplifying formulae that will be useful in the sequel and are potentially beneficial in future work. 
%the scale function and develop new simplifying formula that is used in this paper and is potentially beneficial in other work. 
Section \ref{section_resolvents} computes the resolvent measure and, as special cases, we obtain the one-sided exit identity as well as the expected NPV of dividends in the insurance context.  In Section \ref{section_capital_injection},  we obtain the expected NPV of capital injection.
% Section \ref{section_capital_injection} obtains the net present value of capital injection.  
 Finally, in Section \ref{section_occupation_time}, we study the occupation time of the process above and below the refraction level.

\section{Refracted-reflected spectrally one-sided \lev processes} \label{section_process}
\subsection{Spectrally negative \lev processes and their reflected/refracted processes}
Let $X=(X_t; t\geq 0)$ be a L\'evy process defined on a  probability space $(\Omega, \mathcal{F}, \p)$.  For $x\in \R$, we denote by $\p_x$ the law of $X$ when it starts at $x$ and write for convenience  $\p$ in place of $\p_0$. Accordingly, we shall write $\e_x$ and $\e$ for the associated expectation operators. %\red{[added this because it is used later] 
We let $(\mathcal{F}_t)_{t \geq 0}$ be the filtration generated by $X$. In this paper, we shall assume throughout that $X$ is \textit{spectrally negative},   meaning here that it has no positive jumps and that it is not the negative of a subordinator. This allows us to define the Laplace exponent $\psi(\theta):[0,\infty) \to \R$, i.e.
\[
\e\big[{\rm e}^{\theta X_t}\big]=:{\rm e}^{\psi(\theta)t}, \qquad t, \theta\ge 0,
\]
given by the \emph{L\'evy-Khintchine formula}
\begin{equation}%\label{lk}
\psi(\theta):=\gamma\theta+\frac{\sigma^2}{2}\theta^2+\int_{(-\infty,0)}\big({\rm e}^{\theta x}-1-\theta x\mathbf{1}_{\{x>-1\}}\big)\Pi(\ud x), \quad \theta \geq 0,\notag
\end{equation}
where $\gamma\in \R$, $\sigma\ge 0$, and $\Pi$ is a measure on $(-\infty,0)$ called the L\'evy measure of $X$ that satisfies
\[
\int_{(-\infty,0)}(1\land x^2)\Pi(\ud x)<\infty.
\]

It is well-known that $X$ has paths of bounded variation if and only if $\sigma=0$ and $\int_{(-1, 0)} x\Pi(\mathrm{d}x)$ is finite. In this case $X$ can be written as
\begin{equation}
X_t=ct-S_t, \,\,\qquad t\geq 0,\notag
%\label{BVSNLP}
\end{equation}
where 
\begin{align}
c:=\gamma-\int_{(-1,0)} x\Pi(\mathrm{d}x) \label{def_drift_finite_var}
\end{align}
 and $(S_t; t\geq0)$ is a driftless subordinator. Note that  necessarily $c>0$, since we have ruled out the case that $X$ has monotone paths; its Laplace exponent is given by
\begin{equation*}
\psi(\theta) = c \theta+\int_{(-\infty,0)}\big( {\rm e}^{\theta x}-1\big)\Pi(\ud x), \quad \theta \geq 0.
\end{equation*}

%\subsection{Reflected \lev processes}
The \emph{\lev process reflected at the lower boundary} $0$ is a strong Markov process written concisely by
\begin{align}
U_t := X_t+\sup_{0 \leq s\leq t}(-X_s)\vee0,\qquad t\geq0. \label{reflected_levy}
\end{align}
The supremum term pushes the process upward whenever it attempts to down-cross the level $0$; as a result the process only takes values on $[0, \infty)$. 
% \blue{Kazu: Do we remove this line? We are not using it now. Similarly, the \lev process reflected at the upper level $\{ X_t-\sup_{s\leq t}(X_t)\vee0, t \geq 0 \}$ is a strong Markov process that takes values on $(-\infty, 0]$.}

The \emph{refracted \lev process} is a variant of the reflected \lev process, first introduced by Kyprianou and Loeffen \cite{KL}.
%In \cite{KL}, they study a spectrally negative L\'evy process when its paths are perturbed in a simple way. 
Informally speaking, a linear drift at rate $\delta>0$ is subtracted from the increments of  the underlying \lev process $X$ whenever it exceeds a pre-specified positive level $b>0$. More formally, it is the unique strong solution to the stochastic differential equation given by
\begin{equation}\label{SDE}
\ud A_t=\ud X_t-\delta\mathbf{1}_{\{A_t>b\}}\ud t, \qquad t\geq 0.
\end{equation}
In deriving the fluctuation identities, it is important that the \emph{drift-changed process}
\begin{align}
Y_t := X_t - \delta t, \quad t \geq 0, \label{def_Y}
\end{align}
is again a spectrally negative \lev process that is not the negative of a subordinator.  Hence,  in \cite{KL} and \cite{KLP}, the standing assumption (with $c$ defined as in \eqref{def_drift_finite_var}) 
%In order to work with the above process we make the following
%assumption
\begin{equation}
\mathrm{({\bf H})}\qquad\delta< c,\qquad\text{if $X$ has paths of bounded variation}, \notag
\end{equation}
is imposed.
%According to Kyprianou and Loeffen \cite{KL}, this ensures that a strong solution to (\ref{SDE}) exists and the path of $U$ is not monotone.

The reader is referred to Bertoin \cite{B} and Kyprianou \cite{K} for a complete introduction to the theory of L\'evy processes and their reflected processes.  For refracted \lev processes, see \cite{KL}, \cite{KLP}, \cite{KPP}, and \cite{R2014}.

%\subsection{Refracted \lev processes}

\subsection{Refracted-reflected \lev processes}\label{RRLPD}
For the rest of the paper, we fix $b > 0$ and $\delta \geq 0$ such that condition $\mathrm{({\bf H})}$ above holds.
We define the \emph{refracted-reflected \lev process} as follows.
%For $b>0$, we construct the path of a \lev process which is reflected at the lower barrier $0$ and refracted at the upper barrier $b$. 
%The process moves on $[0,b)$ as a reflected \lev process. 
While the process is above $b$ a linear drift at rate $\delta$ is subtracted from the increments of process. On the other hand, when it attempts to down-cross $0$, it is pushed upward so that it will not go below $0$.

%Then the refracted-reflected \lev process is given as the solution to the following stochastic differential equation
%\begin{equation}\label{DRLP}
%dV_t=dU_t-\delta1_{\{V_t>b\}} dt.\notag
%\end{equation}

%\red{[Does this SDE make sense? It probably needs to be modified because the process does not move like $U_t$  below $b$? The computation below seems ok though.  ]}
%\blue{[How do you think it should be modified? Because from the SDE whenever the process $V$ is below $b$ then $U=V$, that is it behaves like a reflected L\'evy process?]}. \red{[I think there is no concise way of defining.  We can take a look at how Pistorius defines the doubly reflected \lev process.]}

%We formally define the process below by the recursive algorithm below. %Its existence and uniqueness is then clear because the 
The process can be formally constructed by the recursive algorithm given below:

\begin{center}
\line(1,0){300}
\end{center}
 \textbf{Construction of the refracted-reflected \lev process $V$ under $\p_x$}
\begin{description}
\item[Step 0] Set $V_{0-}=x$.  If $x \geq 0$, then set $\underline{\tau} := 0$ and go to \textbf{Step 1}.  Otherwise, set $\overline{\tau} := 0$ and go to \textbf{Step 2}.
\item[Step 1] Let $\{ \widetilde{A}_t; t \geq \underline{\tau} \}$ be the refracted \lev process (with refraction level $b$) that starts at the time $\underline{\tau}$ at the level $x$, and $\overline{\tau} := \inf \{ t > \underline{\tau}: \widetilde{A}_t < 0\}$.
Set $V_t=\widetilde{A}_t$ for all $\underline{\tau} \leq t < \overline{\tau}$. Then go to \textbf{Step 2}.
\item[Step 2] Let $\{ \widetilde{U}_t; t \geq \overline{\tau}\}$ be the \lev process reflected at the lower boundary $0$ that starts at time $\overline{\tau}$ at $0$, and $\underline{\tau} := \inf \{ t > \overline{\tau}: \widetilde{U}_t > b\}$. Set $V_t=\widetilde{U}_t$ for all $\overline{\tau} \leq t < \underline{\tau}$ and $x = b$. Then go to  \textbf{Step 1}.\end{description}
\begin{center}
\line(1,0){300}
\end{center}

%\red{JL: Changed the paragraph this way, because the representation below for $R$ is valid only for the bounded var case.}
Let us define the nondecreasing and right-continuous processes $R_t$ and $L_t$ that represent, respectively, the cumulative amounts of modification up to $t$ that pushes the process upward when it attempts to go below $0$ and that pushes downward  when it is above $b$.  Then, we have a decomposition
\begin{align*}
V_t = X_t + R_t - L_t, \quad t \geq 0,
\end{align*}
where we can write 
\begin{align*}
L_t = \delta \int_0^t 1_{\{ V_s > b \}} \ud s, \quad t \geq 0.
\end{align*}
In particular, \emph{for the case of bounded variation,} %\blue{I do not understand the referee we are saying that the following identity only holds for the bounded variation case} \red{[I guess the referee missed this.  We can stress it by making it italic? and separate the sentence from the above for $L$?]}
\begin{align*}
R_t = \sum_{t\geq0: V_{t-} + \Delta X_t < 0} |V_{t-} + \Delta X_t | \quad t \geq 0.
\end{align*}
Here and for the rest of the paper, we define $\Delta \xi_t := \xi_t - \xi_{t-}$, $t \geq 0$, for any right-continuous process $\xi$.

%Both $R$ and $L$ are nondecreasing processes. 

\subsection{Applications} \label{subsection_dividends}
%\red{[say more about the meanings of these processes using insurance examples.]}
 As discussed in the introduction, the process $V$ can model the surplus of a dividend-paying company under the bail-out setting: they receive an injection of capital so as to prevent the process from going below the default level $0$, while they also pay dividends at a fixed rate $\delta > 0$ whenever the surplus is above the level $b$.  Under this scenario, it is clear from the construction above that $L$ is the cumulative amount of dividend payments whereas $R$ is that of injected capital.

As we have reviewed in the introduction, $V$ can also model queues with abandonments as well as inventories. In the former, $L$ models the number of customers who decide not to line up because it is too long.  In the latter, $R$ models the lost sales due to the lack of stock.

% In this setting, $\mathbb{E}_x (\int_0^\infty e^{-qt}\ud L_t )$ and $\mathbb{E}_x (\int_0^\infty e^{-qt}\ud R_t )$ model, respectively, the NPV of reduction of order and that of the lost sales due to the lack of stock.

Motivated by these applications,  it is thus of great interest to compute the expected NPV's:
 \begin{align} \label{net_present_value}
 \mathbb{E}_x \Big(\int_0^\infty e^{-qt}\ud L_t \Big)= \delta \mathbb{E}_x  \Big(\int_0^\infty e^{-qt}1_{\{V_s>b\}}\ud s \Big) \quad \text{ and } \quad \mathbb{E}_x \Big(\int_{[0, \infty)} e^{-qt}\ud R_t \Big).
 \end{align}
 We compute these values as corollaries to the fluctuation identities of $V$; see Sections \ref{section_resolvents} and \ref{section_capital_injection}.

In addition, it is worth studying further the length of the time $\{ s\geq 0: V_s > b \}$.  In particular, in the queues with abandonments, it is important to compute the distribution of the occupation times $\int 1_{\{ V_s > b \}} \diff s$ and  $\int 1_{\{ V_s < b \}} \diff s$ where the former is the length of the busy period where customers may decide not to line up and the latter is that of the non-busy period; we obtain their Laplace transforms in Section \ref{section_occupation_time}.

\subsection{Approximation of the case of unbounded variation} As has been done in \cite{KL}, our approach first obtains identities for the case in which $X$ is of bounded variation and then extends the results to the unbounded variation case.  In order to carry this out, we need to show that the refracted-reflected \lev process $V$ associated with the \lev process $X$ of unbounded variation can be approximated by those of bounded variation. 

Following Definition 11 of \cite{KL}, given a stochastic process $(\xi_s; s \geq 0)$, a sequence of processes $\{(\xi_s^{(n)})_{s \geq 0};n\geq1\}$ is strongly approximating for $\xi$, if $\lim_{n \uparrow \infty}\sup_{0 \leq s \leq t} |\xi_s - \xi^{(n)}_{s}| =0$ for any $t > 0$ a.s.  As is well-known (see Definition 11 of \cite{KL} and page 210 of \cite{B}), for any spectrally negative \lev process $X$, there exists a strongly approximating sequence $X^{(n)}$ with paths of bounded variation.  We shall obtain a similar result for the refracted-reflected \lev process.

\begin{proposition} \label{prop_approximation}
% Recall that for any spectrally one-sided \lev process $X$ of unbounded variation, there exists a sequence $X^{(n)}$ of bounded variation that converge to $X$ uniformly in $t$ on compacts.  Using the same sequence, the same can be said for the convergence of $U^{(m)} \rightarrow U$ and $A^{(m)} \rightarrow A$.
Suppose $X$ is of unbounded variation and $(X^{(n)}; n \geq 1)$ is a strongly approximating sequence for $X$. In addition, define $V$ and  $V^{(n)}$ as the refracted-reflected processes associated with  $X$ and $X^{(n)}$, respectively. Then $V^{(n)}$ is a strongly approximating sequence of $V$. 
%\red{[JL: I guess we should say explicitly \lev.  It may hold more generally, but we are using the fact that reflected process is its difference with the running infimum (which may not hold if it is not \lev). ]} \blue{[Kazu: The referee talks about the transform, so if the associated transform means taking reflection as the diffeence with running infimum then it holds?] } \red{[I don't know.  But taking the difference may not be equivalent to being reflected depending on the process, and then the process may go below zero or may be bounded from below by a positive value; this may be a problem (in the proof, the hitting times at zero is used).]}\blue{Ok so I guess is easier to say explicitly L\'evy and to the referee that we are not sure it holds in general?}
%Then, for any $t > 0$, we have $\sup_{0 \leq s \leq t} |V^{(n)}_s - V_s| \rightarrow 0$ a.s.
\end{proposition}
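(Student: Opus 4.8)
The plan is to bypass the recursive construction and work instead with the global decomposition $V_t = X_t + R_t - L_t$ recorded in Section \ref{section_process}, rewriting it as $V = \Gamma(X - L)$, where $\Gamma$ is the Skorokhod reflection map at $0$, $\Gamma(f)_t = f_t - \inf_{0\le s\le t}(f_s\wedge 0)$, and $L_t = \delta\int_0^t \mathbf{1}_{\{V_s>b\}}\,\ud s$; the analogous identities hold for $V^{(n)}$ with $X,R,L$ replaced by $X^{(n)},R^{(n)},L^{(n)}$. Since $\Gamma$ is Lipschitz in the supremum norm, the whole statement reduces to a single pathwise stability estimate, and the goal is to prove
\[
\sup_{0\le s\le t}\big|V_s - V^{(n)}_s\big| \;\le\; 2\sup_{0\le s\le t}\big|X_s - X^{(n)}_s\big|, \qquad t>0,\ \text{a.s.}
\]
The right-hand side tends to $0$ as $n\uparrow\infty$ by the strong approximation of $X$ by $X^{(n)}$, which gives the claim at once. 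I would first verify that the construction of Section \ref{section_process} indeed yields $V = \Gamma(X-L)$: on a refracted excursion one has $V>0$, so $\Gamma$ adds nothing and $V$ picks up drift $\delta\,\ud s$ exactly where $V>b$, while the reflected pieces contribute the regulator $R$ at the level $0$.

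The estimate itself I would obtain from the observation that \emph{both} correction terms are dissipative for the difference $D := V - V^{(n)}$. Write $D_t = (X_t - X^{(n)}_t) + \phi_t + \rho_t$ with $\phi_t := -\delta\int_0^t(\mathbf{1}_{\{V_s>b\}} - \mathbf{1}_{\{V^{(n)}_s>b\}})\,\ud s$ and $\rho_t := R_t - R^{(n)}_t$. Fix $t$ and a time $\tau\le t$ with $D_\tau>0$, and set $\sigma := \sup\{s\le\tau : D_s\le 0\}$, which is well defined since $D_0=0$; then $D_s>0$ on $(\sigma,\tau]$ and $D_{\sigma-}\wedge D_\sigma\le 0$. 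On $(\sigma,\tau]$ we have $V_s > V^{(n)}_s \ge 0$, hence $V_s>0$, so $R$ is flat there and $\rho_\tau-\rho_\sigma\le 0$; likewise $\mathbf{1}_{\{V_s>b\}}\ge \mathbf{1}_{\{V^{(n)}_s>b\}}$, so $\phi_\tau-\phi_\sigma\le 0$. Consequently $D_\tau \le D_{\sigma-} + (X_\tau - X^{(n)}_\tau) - (X_{\sigma-} - X^{(n)}_{\sigma-}) \le 2\sup_{s\le t}|X_s - X^{(n)}_s|$, and a symmetric argument bounds $-D_\tau$. This single mechanism simultaneously recovers the stability of the reflection map and the refracted-process approximation of \cite{KL} as the extreme cases ``no refraction'' ($\delta=0$) and ``no reflection''.

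The ingredients I would invoke are the supremum-norm Lipschitz property of $\Gamma$, the representation $V=\Gamma(X-L)$, and the matching initial condition $D_0=0$. The main obstacle is making the dissipativity comparison rigorous across jump times, since $X$, $X^{(n)}$, $R$ and $R^{(n)}$ may all jump: one must check that a positive jump of $D$ at the start $\sigma$ of an excursion above $0$ cannot come through the regulator. This is where the structure of the Skorokhod problem is essential — a jump of $R$ forces $V$ to $0$ and hence $D_\sigma\le 0$, so such a jump cannot initiate an excursion of $D$ above $0$; the only admissible upward jump is through $X-X^{(n)}$, which is already accounted for in the bound. A secondary point, needed only to ensure that $V$ and $V^{(n)}$ are well defined, is that the switching times of the construction do not accumulate on compact intervals; this follows from the strong Markov property, since each completed cycle requires an excursion of the reflected process from $0$ up to $b$, and these occur at a finite rate.
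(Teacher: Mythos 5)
Your argument is correct, but it is genuinely different from the one in the paper, and in fact quantitatively sharper. The paper's proof (Appendix A.1) partitions $[0,t]$ by an alternating sequence of switching times $\underline{T}_k,\overline{T}_k$ built from $V$ crossing $b/2$ and hitting $0$, applies the known $2$-Lipschitz estimates for the reflection map and for the refraction map (Lemma 12 of \cite{KL}) separately on each interval, and propagates the error by induction; this forces a careful check that, for $n$ large, $V^{(n)}$ undergoes the same type of dynamics as $V$ on each interval, and yields only the path-dependent constant $4(2^{K(\omega)}-1)$ with $K(\omega)$ the (a.s.\ finite) number of switches. Your excursion/dissipativity argument for $D=V-V^{(n)}$ treats both corrections at once: on a maximal interval $(\sigma,\tau]$ where $D>0$ one has $V>V^{(n)}\ge 0$, so $\ud R$ is supported off $(\sigma,\tau]$ while $R^{(n)}$ only increases, and $1_{\{V_s>b\}}\ge 1_{\{V^{(n)}_s>b\}}$ so the refraction difference is nonpositive; together with $D_{\sigma-}\wedge D_\sigma\le 0$ and the observation that a jump of $R$ at $\sigma$ would force $D_\sigma\le 0$, this gives the uniform bound $\sup_{s\le t}|D_s|\le 2\sup_{s\le t}|X_s-X^{(n)}_s|$, which is stronger than what the paper proves and does not require tracking the number of switches. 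Two points you should make explicit to close the argument: (a) the identification $V=\Gamma(X-L)$, i.e.\ that the regulator $R$ produced by the recursive construction satisfies $\int_0^\infty 1_{\{V_s>0\}}\,\ud R_s=0$ (in the unbounded variation case $V$ may creep through $0$ at the switch from refraction to reflection, so $R$ need not jump there, but the support property still holds); and (b) the non-accumulation of switching times, which is needed for the construction of $V$ itself and which the paper also only asserts. Note also that the supremum-norm Lipschitz property of $\Gamma$ that you invoke at the outset is not by itself what makes the proof work (since $L$ depends on $V$); the operative mechanism is precisely the monotone/dissipative structure you exploit in the excursion estimate.
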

\begin{proof}
See Appendix \ref{proof_prop_approximation}.
\end{proof}

\section{Review of scale functions and some new identities}  \label{section_scale_functions}In this section, we review the scale function of spectrally negative \lev processes and develop some new identities that will be used to simplify the expression of the fluctuation identities in our later arguments.

%As it has been studied in \cite{kyprianou2010refracted}, the NPV \eqref{v_b_def}  can be expressed in terms of  the scale functions of the two spectrally negative \lev processes $X$ and $Y$.
Fix $q \geq 0$. Following the same notations as in \cite{KL}, we use $W^{(q)}$ and $\mathbb{W}^{(q)}$ for the scale functions of $X$ and $Y$ (see \eqref{def_Y} for the latter), respectively.  Namely,  these are the mappings from $\R$ to $[0, \infty)$ that take value zero on the negative half-line, while on the positive half-line they are strictly increasing functions that are defined by their Laplace transforms:
\begin{align} \label{scale_function_laplace}
\begin{split}
\int_0^\infty  \mathrm{e}^{-\theta x} W^{(q)}(x) \diff x &= \frac 1 {\psi(\theta)-q}, \quad \theta > \Phi(q), \\
\int_0^\infty  \mathrm{e}^{-\theta x} \mathbb{W}^{(q)}(x) \diff x &= \frac 1 {\psi_Y(\theta) -q}, \quad \theta > \varphi(q),
\end{split}
\end{align}
where $\psi_Y(\theta) := \psi(\theta) - \delta \theta$, $\theta \geq 0$, is the Laplace exponent for $Y$ and
\begin{align}
\begin{split}
\Phi(q) := \sup \{ \lambda \geq 0: \psi(\lambda) = q\} \quad \textrm{and} \quad \varphi(q) := \sup \{ \lambda \geq 0: \psi_Y(\lambda) = q\}. \notag
\end{split}
%\label{def_varphi}
\end{align}
In particular, when $q=0$, we shall drop the superscript.
By the strict  convexity of $\psi$, we derive the inequality $\varphi(q) > \Phi(q) > 0$ for $q > 0$ and   $\varphi(0) \geq \Phi(0) \geq 0$.

We also define, for $x \in \R$,
\begin{align*}
\overline{W}^{(q)}(x) &:=  \int_0^x W^{(q)}(y) \diff y, \\
Z^{(q)}(x) &:= 1 + q \overline{W}^{(q)}(x),  \\
\overline{Z}^{(q)}(x) &:= \int_0^x Z^{(q)} (z) \diff z = x + q \int_0^x \int_0^z W^{(q)} (w) \diff w \diff z.
\end{align*}
Noting that $W^{(q)}(x) = 0$ for $-\infty < x < 0$, we have
\begin{align}
\overline{W}^{(q)}(x) = 0, \quad Z^{(q)}(x) = 1  \quad \textrm{and} \quad \overline{Z}^{(q)}(x) = x, \quad x \leq 0.  \label{z_below_zero}
\end{align}
In addition, we define $\overline{\mathbb{W}}^{(q)}$, $\mathbb{Z}^{(q)}$ and $\overline{\mathbb{Z}}^{(q)}$ analogously for $Y$.  The scale functions of $X$ and $Y$ are related, for $x \in \R$ and 
$p, q \geq 0$,  by the following equalities%\begin{align} \label{diff_scale_functions_X_Y}
%\overline{W}^{(q)}(x)-\overline{\mathbb{W}}^{(q)}(x)=\delta\int_0^x\mathbb{W}^{(q)}(x-u)W^{(q)}(u)\ud u, \quad x \in \R. 
%\end{align}
%see the proof of the proof of Theorem 4 in \cite{KL}.
%
%after noting that
\begin{align}\label{RLqp}
&\int_0^x\mathbb{W}^{(p)}(x-y)\big[\delta W^{(q)}(y)-(q-p)\overline{W}^{(q)}(y)\big]\ud y=\overline{\mathbb{W}}^{(p)}(x)-\overline{W}^{(q)}(x), \quad \\
&\int_0^x\mathbb{W}^{(p)}(x-y)\big[\delta Z^{(q)}(y)-(q-p)\overline{Z}^{(q)}(y)\big]\ud y=\overline{\mathbb{Z}}^{(p)}(x)-\overline{Z}^{(q)}(x)+\delta\overline{\mathbb{W}}^{(p)}(x),\label{RLqp2}
\end{align}
%\red{[moved the second line here]}
%that can be obtained by performing an integration of identity (\ref{RLqp}).
which can be proven by showing that the Laplace transforms on both sides are equal.

Regarding their asymptotic values as $x \downarrow 0$ we have, as in Lemma 3.1 of \cite{KKR},
\begin{align*}%\label{eq:Wqp0}
\begin{split}
W^{(q)} (0) &= \left\{ \begin{array}{ll} 0 & \textrm{if $X$ is of unbounded
variation,} \\ c^{-1} & \textrm{if $X$ is of bounded variation,}
\end{array} \right.  \\ \mathbb{W}^{(q)} (0) &= \left\{ \begin{array}{ll} 0 & \textrm{if $Y$ is of unbounded
variation,} \\ (c-\delta)^{-1} & \textrm{if $Y$ is of bounded variation,}
\end{array} \right. 
%\\
%W^{(q)'} (0+) &:= \lim_{x \downarrow 0}W^{(q)'} (x) =
%\left\{ \begin{array}{ll}  \frac 2 {\sigma^2} & \textrm{if }\sigma > 0, \\
%\infty & \textrm{if }\sigma = 0 \; \textrm{and} \; \nu(-\infty,0) = \infty, \\
%\frac {q + \nu(-\infty, 0)} {c^2} &  \textrm{if }\sigma = 0 \; \textrm{and} \; \nu(-\infty, 0) < \infty.
%\end{array} \right.
\end{split}
\end{align*}
and, as in Lemma 3.3 of \cite{KKR}, %\red{given $\psi'(\Phi(q)) \neq 0$ and $\psi_Y'(\varphi(q)) \neq 0$, respectively,}
\begin{align}
\begin{split}
e^{-\Phi(q) x}W^{(q)} (x) \nearrow \psi'(\Phi(q))^{-1} \quad \textrm{and} \quad e^{-\varphi(q) x}\mathbb{W}^{(q)} (x) \nearrow \psi_Y'(\varphi(q))^{-1}, \quad \textrm{as } x \rightarrow \infty,
\end{split}
\label{W_q_limit}
\end{align}
%\blue{Kazu: Instead of your red comment how about this: where the right hand side above is understood in the limiting sense $\lim_{q\downarrow0} q/\Phi(q)=
%0\vee(1/\psi'(0))$, and  $\lim_{q\downarrow0} q/\varphi(q)=
%0\vee(1/\psi'_Y(0))$ when $q = 0$.}  
where in the case $\psi'(0+) = 0$ or $\psi'_Y(0+) = 0$, the right hand side, when $q=0$,  is understood to be infinity.

The increments of the refracted-reflected \lev process can be decomposed into those of $Y$ and $U$ that are defined in \eqref{def_Y} and \eqref{reflected_levy}, respectively.  Here, we summarize a few known identities of these processes in terms of the scale function that will be used later in the paper.

%The most well-known application of the scale function can be found in the two-sided exit problem. 
For the drift-changed process $Y$, let us define the first down- and up-crossing times, respectively, by
\begin{align}
\label{first_passage_time}
\tau_a^- := \inf \left\{ t > 0: Y_t < a \right\} \quad \textrm{and} \quad \tau_a^+ := \inf \left\{ t > 0: Y_t >  a \right\}, \quad a \in \R;
\end{align}
here and throughout, let $\inf \varnothing = \infty$.
Then, for any $a > b$ and $x \leq a$,
\begin{align}
\begin{split}
\E_x \left( e^{-q \tau_a^+} 1_{\left\{ \tau_a^+ < \tau_b^- \right\}}\right) &= \frac {\mathbb{W}^{(q)}(x-b)}  {\mathbb{W}^{(q)}(a-b)}, \\
 \E_x \left( e^{-q \tau_b^-} 1_{\left\{ \tau_a^+ > \tau_b^- \right\}}\right) &= \mathbb{Z}^{(q)}(x-b) -  \mathbb{Z}^{(q)}(a-b) \frac {\mathbb{W}^{(q)}(x-b)}  {\mathbb{W}^{(q)}(a-b)}.
% \E_x \left[ e^{-q \tau_0^-} \right] &= Z^{(q)}(x) -  \frac q {\Phi(q)} W^{(q)}(x).
\end{split}
 \label{laplace_in_terms_of_z}
\end{align}
In addition, the \emph{$q$-resolvent measure} is known to have a density written as 
\begin{align} \label{resolvent_density}
\E_x \Big( \int_0^{\tau_{b}^- \wedge \tau^+_a} e^{-qt} 1_{\left\{ Y_t \in \diff y \right\}} \diff t\Big) &= \Big[ \frac {\mathbb{W}^{(q)}(x-b) \mathbb{W}^{(q)} (a-y)} {\mathbb{W}^{(q)}(a-b)} -\mathbb{W}^{(q)} (x-y) \Big] \diff y,  \quad b < x < a;
\end{align}
see Theorem 8.7 of \cite{K}.

It is known that a spectrally negative \lev process creeps downwards (i.e.\ $\p_x \{ Y_{\tau_b^-}= b, \tau_b^- < \infty \} > 0$ for $x > b$) if and only if $\sigma > 0$ (see Exercise 7.6 of \cite{K}). Hence, \emph{for the case of bounded variation}, the above identity \eqref{resolvent_density} together with the compensation formula (see Theorem 4.4 of \cite{K}) gives, for any $b < a$ and a nonnegative measurable function $l$,
\begin{multline} \label{undershoot_expectation}
\E_x\left(e^{-q\tau_b^-}l(Y_{\tau_b^-})1_{\{\tau_b^-<\tau_a^+\}}\right)\\
=\int_0^{a-b}\int_{(-\infty,-y)} l(y+u+b)\left\{\frac{\mathbb{W}^{(q)}(x-b) \mathbb{W}^{(q)}(a-b-y)}{ \mathbb{W}^{(q)}(a-b)}-\mathbb{W}^{(q)}(x-b-y)\right\}\Pi(\ud u)\ud y, \quad x \in \R. 
\end{multline}

Similar identities can also be computed for the reflected \lev process $U$ as in  \eqref{reflected_levy}. Let 
\begin{align}
\kappa^+_b:=\inf\{t>0:U_t>b\}. \label{def_kappa_time}
\end{align}
By Theorem 1 (i) of \cite{P2004}, for any Borel set $B$,
\begin{align}\label{rsr}
%U^{(q)}(x,c;B):=
\e_x\bigg(\int_0^{\kappa_b^+}e^{-qt}1_{\{ U_t \in B \}}\ud t \bigg)=\frac{Z^{(q)}(x)}{Z^{(q)}(b)}\rho^{(q)}(b;B)-\rho^{(q)}(x;B), \quad x \leq b,
\end{align} 
where
\[
\rho^{(q)}(x;B) :=\int_0^xW^{(q)}(x-y)1_{\{y\in B\}}\ud y.
\]
In particular,
\begin{align} \label{upcrossing_time_reflected}
\e_x\big(e^{-q\kappa_b^+}\big)=\frac{Z^{(q)}(x)}{Z^{(q)}(b)}, \quad x \leq b.
\end{align}
In addition, if we define, for $t \geq 0$, $\tilde{R}_t := \sup_{s\leq t}(-X_s)\vee0$ so that $U_t = X_t + \tilde{R}_t$, then
\begin{align}
\mathbb{E}_x\Big(\int_{[0,\kappa_b^+]}e^{-qt}\ud \tilde{R}_t\Big)&=- \Big( \overline{Z}^{(q)}(x) + \frac {\psi'(0+)} q\Big) +\Big( \overline{Z}^{(q)}(b) + \frac {\psi'(0+)} q \Big) \frac{Z^{(q)}(x)} {Z^{(q)}(b)}, \quad x \leq b; \label{capital_injection_identity_SN}
\end{align}
see page 167 in the proof of Theorem 1 of \cite{APP2007}. It is noted that \eqref{upcrossing_time_reflected} and \eqref{capital_injection_identity_SN} hold even for $x < 0$ by \eqref{z_below_zero}.

%\red{By Pistorius, ``on exit and ergodicity of the spectrally one-side \lev,
%\begin{align*}
%\hat{U}^q(x, \ud y ) &= (Z^{(q)}(x) W^{(q)}(a-y)/Z^{(q)}(a) - W^{(q)} (x-y)) \ud y, \\
%U^q(x, \ud y ) &= W^{(q)} (a-x) W^{(q)} (\ud y) / W^{(q)}(a+) - W^{(q)}(y-x) \ud y,
%\end{align*}
%where $W^{(q)} (\ud y) = W^{(q)}(0) \delta_0 (\ud y) + W^{(q)'} (y+) \ud y$.
%}

\begin{remark} \label{remark_strongly_approximating}Suppose $(X^{(n)}; n \geq 1)$ is a strongly approximating sequence for $X$ and $(W^{(q)}_n; n \geq 1)$ and $(\mathbb{W}^{(q)}_n; n \geq 1)$ are the corresponding scale functions of $X^{(n)}$ and  $Y^{(n)} :=( X^{(n)}_t - \delta t, t \geq 0)$ respectively. Then, by the continuity theorem, as is discussed in Lemma 20 of \cite{KL}, $W^{(q)}_n(x)$ (resp.\ $\mathbb{W}^{(q)}_n(x)$) converges to $W^{(q)}(x)$ (resp.\ $\mathbb{W}^{(q)}(x)$) for every $x \in \R$. In addition, as recently shown in Remark 3.2 of \cite{PY_astin}, (if $X$ is of unbounded variation) $W^{(q)\prime}_n(x+)$ (resp.\ $\mathbb{W}^{(q)\prime}_n(x+)$) converges to $W^{(q)\prime}(x)$ (resp.\ $\mathbb{W}^{(q)\prime}(x)$) for all $x > 0$. 
% and $W^{(q)\prime}_n(x)$ (resp.\ $\mathbb{W}^{(q)\prime}_n(x)$) converges to $W^{(q)\prime}(x)$ (resp.\ $\mathbb{W}^{(q)\prime}(x)$) for a.e.\ $x \in \R$.  %Proposition \ref{prop_approximation} ensures, on condition that the limit can be interchanged over the expectation operator, that the results can be extended for the case of unbounded variation. 
This observation together with Proposition \ref{prop_approximation} is  used to extend the result from the case of bounded variation to that of unbounded variation.
\end{remark}

We conclude this section with some new identities that will simplify the expressions of our results and  will help us avoid the use of the \lev measure. In particular, the first item below is a generalization of (4.17) in \cite{KL}. 
\begin{lemma} \label{lemma_useful_identity}
Suppose $X$ is of bounded variation. For any $p,q\geq0$ and  $v \leq b \leq x$, we have 
\begin{itemize}
\item[(i)]
\begin{align} \label{useful_identity_W}
\begin{split}
\int_{0}^{\infty}&\int_{(-\infty,-y)}W^{(q)}(y+u+b-v)\mathbb{W}^{(p)}(x-b-y)\Pi(\ud u)\ud y \\
%&=(c-\delta) {W^{(q)}(b-v)} \mathbb{W}^{(p)}(x-b)-W^{(q)}(x-v)-\delta\int_b^x\mathbb{W}^{(p)}(x-y)W^{(q)\prime}(y-v)\ud y \\
%&+(q-p)\int_b^x\mathbb{W}^{(p)}(y-b)W^{(q)}(x-y+b-v)\ud y \\
&=(c-\delta){W^{(q)}(b-v)} \mathbb{W}^{(p)}(x-b)-W^{(q)}(x-v)-\delta\int_b^x\mathbb{W}^{(p)}(x-y)W^{(q)\prime}(y-v)\ud y \\
&+(q-p)\int_b^x \mathbb{W}^{(p)} (x-z) W^{(q)} (z-v) \ud z,
\end{split}
\end{align}
\item[(ii)]
%\red{[because
%\begin{align*}
%\int_b^x\mathbb{W}^{(p)}(y-b)W^{(q)}(x-y+b-a)\ud y = \int_b^x \mathbb{W}^{(q)} (x-z) W^{(q)} (z-a) \ud z,
%\end{align*}]}
%\par (ii) For any $p,q\geq0$ and $v < b < x$ we have that
\begin{align} \label{lemma_useful_identity_c}
\begin{split}
\int_{0}^{\infty}\int_{(-\infty,-y)} &Z^{(q)}(y+u+b-v)\mathbb{W}^{(p)}(x-b-y)\Pi(\ud u)\ud y \\
&= (c-\delta) Z^{(q)}(b-v) \mathbb{W}^{(p)}(x-b)-Z^{(q)}(x-v)-(p-q)\overline{\mathbb{W}}^{(p)}(x-b) \\
&+q\int_b^x\mathbb{W}^{(p)}(x-y)\left((q-p)\overline{W}^{(q)}(y-v)-\delta W^{(q)}(y-v)\right)\ud y, \end{split}
\end{align}
%(iii) For any $p,q\geq0$ and $v < b < x$ we have that
%\begin{align}\label{lemma_useful_identity_b}
%\begin{split}
%\int_0^{\infty}&\int_{-\infty}^{-y}\overline{Z}^{(q)}(y+u+b-a)\mathbb{W}^{(p)}(x-b-y)\Pi(\ud u)\ud y \\
%&=\red{(c-\delta)}\overline{Z}^{(q)}(b-a) \mathbb{W}^{(q)}(x-b)+\overline{Z}^{(q)}(x-b)-\overline{Z}^{(q)}(x-a)-\delta\int_b^x\mathbb{W}^{(q)}(x-y)Z^{(q)}(y-a)\ud y \\
%&+(q-p)\int_b^x\mathbb{W}^{(p)}(x-y)\overline{Z}^{(q)}(y-a) \ud y \\
%&+\psi'_Y(0+)\overline{\mathbb{W}}^{(q)}(x-b)-\overline{\mathbb{Z}}^{(q)}(x-b)+\delta\int_b^x\mathbb{W}^{(q)}(x-y)Z^{(q)}(y-b)\ud y, \\
%&-(q-p)\int_b^x\mathbb{W}^{(p)}(x-y)\overline{Z}^{(q)}(y-b) \ud y.
%\end{split}
%\end{align}
\item[(iii)]
\begin{align} \label{lemma_useful_identity_Z_bar}
\begin{split}
\int_0^{\infty}&\int_{(-\infty,-y)}\overline{Z}^{(q)}(y+u+b-v)\mathbb{W}^{(p)}(x-b-y)\Pi(\ud u)\ud y \\
&=(c-\delta)\overline{Z}^{(q)}(b-v) \mathbb{W}^{(p)}(x-b)-\overline{Z}^{(q)}(x-v)-\delta\int_b^x\mathbb{W}^{(p)}(x-y)Z^{(q)}(y-v)\ud y \\
&+(q-p)\int_b^x\mathbb{W}^{(p)}(x-y)\overline{Z}^{(q)}(y-v) \ud y +\psi'(0+)\overline{\mathbb{W}}^{(p)}(x-b).
\end{split}
\end{align}
\end{itemize}
\end{lemma}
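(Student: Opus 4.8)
The plan is to prove all three identities by the Laplace-transform method already used for \eqref{RLqp} and \eqref{RLqp2}. I fix $v \le b$, regard each side as a function of $x \ge b$, or equivalently of $\ell := x-b \ge 0$ with $m := b-v \ge 0$ held fixed, and show that the Laplace transforms in $\ell$ of the two sides agree. The decisive structural observation is that the left-hand side of (i) is a convolution against $\mathbb{W}^{(p)}$: writing $H(y) := \int_{(-\infty,-y)} W^{(q)}(y+u+m)\,\Pi(\ud u)$, the left-hand side equals $\int_0^\infty H(y)\,\mathbb{W}^{(p)}(\ell-y)\,\ud y = (H*\mathbb{W}^{(p)})(\ell)$, since $\mathbb{W}^{(p)}$ vanishes on the negative half-line. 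Hence its transform factorises as $\widehat H(\theta)/(\psi_Y(\theta)-p)$ by \eqref{scale_function_laplace}. The same factorisation occurs for the two convolution terms on the right, so the whole of (i) reduces to an equality between $\widehat H$ and an explicit rational combination of $\theta$, $\psi$ and $\psi_Y$.

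The crux is to compute $\widehat H(\theta)$ and to watch the \lev measure disappear. First I would apply Fubini to write $\widehat H(\theta) = \int_{(-\infty,0)} e^{\theta(u+m)}\big(\int_{(u+m)^+}^{m} e^{-\theta s}W^{(q)}(s)\,\ud s\big)\Pi(\ud u)$, and then invoke the bounded-variation \lev--Khintchine representation in the form $\int_{(-\infty,0)}(e^{\theta x}-1)\Pi(\ud x) = \psi(\theta)-c\theta$, equivalently $\int_0^\infty e^{-\theta y}\overline\Pi(y)\,\ud y = c - \psi(\theta)/\theta$ with $\overline\Pi(y) := \Pi((-\infty,-y))$. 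This is precisely the step that turns all $\Pi$-dependence into $\psi$, after which $\widehat H$ becomes rational in $\theta,\psi,\psi_Y$, the boundary values $W^{(q)}(0)=c^{-1}$ and $\mathbb{W}^{(p)}(0)=(c-\delta)^{-1}$ entering through the endpoints of the inner integral. I would then assemble the transform of the right-hand side of (i) from $\widehat{W^{(q)}}=1/(\psi-q)$, $\widehat{\mathbb{W}^{(p)}}=1/(\psi_Y-p)$, $\widehat{\overline W^{(q)}}=1/(\theta(\psi-q))$ and the product/convolution rules, simplify with the help of \eqref{RLqp}, and check it coincides with $\widehat H(\theta)/(\psi_Y(\theta)-p)$. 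The factor $(c-\delta)$ multiplying $W^{(q)}(b-v)\mathbb{W}^{(p)}(x-b)$ and the drift term carrying $\delta W^{(q)\prime}$ arise exactly as the boundary residues of this rational simplification.

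For (ii) and (iii) I would run the identical scheme with $Z^{(q)}$ and $\overline Z^{(q)}$ replacing $W^{(q)}$, now using $\widehat{Z^{(q)}}=\psi/(\theta(\psi-q))$ and $\widehat{\overline Z^{(q)}}=\psi/(\theta^2(\psi-q))$ together with the companion relation \eqref{RLqp2}; the extra terms $-(p-q)\overline{\mathbb{W}}^{(p)}(x-b)$ in (ii) and $\psi'(0+)\overline{\mathbb{W}}^{(p)}(x-b)$ in (iii) are produced by the constant and linear parts of $Z^{(q)}=1+q\overline W^{(q)}$ and $\overline Z^{(q)}$, which after transforming contribute a single convolution of $\mathbb{W}^{(p)}$ against $\overline\Pi$, again evaluated through $\int_0^\infty e^{-\theta y}\overline\Pi(y)\,\ud y = (c-\delta)-\psi_Y(\theta)/\theta$. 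A convenient shortcut for the bulk of the algebra is the pair of differential relations in $v$, namely $\partial_v(\text{LHS of (ii)}) = -q\,(\text{LHS of (i)})$ and $\partial_v(\text{LHS of (iii)}) = -(\text{LHS of (ii)})$, which one checks also hold termwise on the right using $Z^{(q)\prime}=qW^{(q)}$ and $\overline Z^{(q)\prime}=Z^{(q)}$; these reduce (ii) and (iii) to (i) up to a $v$-independent function of $x$, so that only the boundary value at $v=b$ needs the direct transform computation described above.

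The main obstacle I anticipate is the bookkeeping in the computation of $\widehat H$: one must justify the interchange of integration despite a possibly infinite $\Pi$ near the origin---controlled in the bounded-variation regime by $\int_{(-1,0)}|x|\,\Pi(\ud x)<\infty$ and the consequent $\int_0^1\overline\Pi(y)\,\ud y<\infty$---carefully separate the two regimes $u+m\ge 0$ and $u+m<0$ inside the inner integral, and then verify the \emph{complete} cancellation of the \lev-measure contributions so that the final expression is manifestly $\Pi$-free. By contrast, once (i) is established, the reductions of (ii) and (iii) via \eqref{RLqp2} and the $v$-derivative relations are comparatively routine.
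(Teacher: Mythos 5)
Your plan is correct, and the key computations do close: the left-hand sides are genuine convolutions against $\mathbb{W}^{(p)}$ in $x-b$, the transform of $y\mapsto\int_{(-\infty,-y)}W^{(q)}(y+u+b-v)\Pi(\ud u)$ reduces to $\psi$-expressions via $\int_0^\infty e^{-\theta y}\overline{\Pi}(y)\,\ud y=c-\psi(\theta)/\theta$, and I have checked that your termwise $v$-derivative relations hold and that the $v=b$ boundary case of (ii) collapses to the polynomial identity $(c\theta-\psi)(\psi-q)=(c-\delta)\theta(\psi-q)-\psi(\psi_Y-p)-(p-q)(\psi-q)+q(q-p)-q\delta\theta$, which is true. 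Your route is, however, genuinely different from the paper's. The paper proves (i) and (ii) probabilistically: via the compensation-formula identity \eqref{undershoot_expectation}, each double $\Pi$-integral is recognized as the two-sided exit functional $\E_x\big(e^{-p\tau_b^-}f(Y_{\tau_b^-})1_{\{\tau_b^-<\tau_a^+\}}\big)$ with $f=W^{(q)}$ or $Z^{(q)}$, and is then equated with the $\Pi$-free expression for the same expectation supplied by Lemma 1 of Renaud \cite{R2014}; part (iii) is obtained in the appendix by integrating (ii) over the shift variable (the integrated form of your $\partial_v$ relation) and evaluating the leftover linear term $\int_0^\infty\int_{(-\infty,-y)}(y+u)\mathbb{W}^{(p)}(x-b-y)\Pi(\ud u)\ud y$ by Lemma 3.1 of \cite{BKY} rather than by a transform computation. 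What your approach buys is self-containedness --- nothing is needed beyond the L\'evy--Khintchine formula and the defining transforms \eqref{scale_function_laplace}, and the mechanism by which $\Pi$ cancels is made explicit --- at the price of the heavier analytic bookkeeping you anticipate (Fubini with a possibly infinite $\Pi$, the case split at $u+m=0$, and justifying differentiation under the integral in $v$, which you could sidestep by integrating in $v$ from $v$ to $b$ exactly as the paper does). The paper's route is shorter but outsources the hard cancellation to the two cited lemmas.
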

\begin{proof}
%\red{[delete?]See Appendix \ref{proof_lemma_useful_identit}.}  
The formulae (i) and (ii) can be derived directly using \eqref{undershoot_expectation} and Lemma  1 of Renaud \cite{R2014}, which obtains \eqref{cc0_general} below and the same expression with $Z^{(p+q)}$ replaced with $W^{(p+q)}$. For the proof of (iii), see Appendix \ref{proof_lemma_useful_identit}. 
\end{proof}

These expressions are to be frequently used in later arguments.  In particular, 
by \eqref{undershoot_expectation} and (\ref{lemma_useful_identity_c}), as obtained in Lemma  1 of Renaud \cite{R2014}, for $x \geq b$, and $p,p+q \geq 0$,
\begin{align} \label{cc0_general}
\begin{split}
&\mathbb{E}_x\left(e^{-p\tau_b^-}Z^{(p+q)}(Y_{\tau_b^-})1_{\{\tau_b^-<\tau_a^+\}}\right) \\ &=
\int_0^{a-b}\int_{(-\infty,-y)} Z^{(p+q)}(y+u+b)\left\{\frac{\mathbb{W}^{(p)}(x-b)\mathbb{W}^{(p)}(a-b-y)}{\mathbb{W}^{(p)}(a-b)}-\mathbb{W}^{(p)}(x-b-y)\right\}\Pi(\ud u)\ud y\\
%&=\frac{\mathbb{W}^{(p)}(x-b)}{\mathbb{W}^{(p)}(a-b)}\Bigg((c-\delta) Z^{(p+q)}(b) \mathbb{W}^{(p)}(a-b)-Z^{(p+q)}(a)+q\overline{\mathbb{W}}^{(p)}(a-b)\\
%&+(p+q)\int_b^a\mathbb{W}^{(p)}(a-y)\left(q\overline{W}^{(p+q)}(y)-\delta W^{(p+q)}(y)\right)\ud y\\
%&-\Bigg((c-\delta) Z^{(p+q)}(b) \mathbb{W}^{(p)}(x-b)-Z^{(p+q)}(x)+q\overline{\mathbb{W}}^{(p)}(x-b)\\
%&+(p+q)\int_b^x\mathbb{W}^{(p)}(x-y)\left(q\overline{W}^{(p+q)}(y)-\delta W^{(p+q)}(y)\right)\ud y\Bigg)\\
&=\mathcal{R}^{(p,q)}(x)-\mathcal{R}^{(p,q)}(a)\frac{\mathbb{W}^{(p)}(x-b)}{\mathbb{W}^{(p)}(a-b)},
\end{split}
\end{align}
where 
\begin{multline} \label{mathcal_R_def}
\mathcal{R}^{(p,q)}(x):=Z^{(p+q)}(x)-q\overline{\mathbb{W}}^{(p)}(x-b)\\-(p+q)\int_b^x\mathbb{W}^{(p)}(x-y)\left(q\overline{W}^{(p+q)}(y)-\delta W^{(p+q)}(y)\right)\ud y, \quad x \in \R, \; p, p+q \geq 0.
\end{multline}
As its special case, we have %\blue{Kazu: Added the middle line in response to the referee's comment in pg.12 of needing (3.15).} \red{[ok. so referencing (3.19) is enough in page 12?]} \blue{[I believe that the referee wanted us to reference (3.15) because of the middle line I added in the proof in p 12. But I believe its easier this way than using (3.15).]} \red{ok.}
\begin{align}
\label{cc0}
\begin{split}
\mathbb{E}_x&\left(e^{-q\tau_b^-}Z^{(q)}(Y_{\tau_b^-})1_{\{\tau_b^-<\tau_a^+\}}\right)\\
&=
\int_0^{a-b}\int_{(-\infty,-y)} Z^{(q)}(y+u+b)\left\{\frac{\mathbb{W}^{(q)}(x-b)\mathbb{W}^{(q)}(a-b-y)}{\mathbb{W}^{(q)}(a-b)}-\mathbb{W}^{(q)}(x-b-y)\right\}\Pi(\ud u)\ud y\\ 
%=\mathbb{E}_{x-b}\left(e^{-q\tau_0^-}Z^{(q)}(Y_{\tau_0^-}+b)1_{\{\tau_0^-<\tau_{a-b}^+\}}\right) \\
%&=\int_0^{\infty}\int_{-\infty}^{-y}Z^{(q)}(u+y+b)\left\{\frac{\mathbb{W}^{(q)}(x-b)\mathbb{W}^{(q)}(a-b-y)}{\mathbb{W}^{(q)}(a-b)}-\mathbb{W}^{(q)}(x-b-y)\right\}\Pi(\ud u)\ud y \\
%&=\frac{\mathbb{W}^{(q)}(x-b)}{\mathbb{W}^{(q)}(a-b)}\left((c-\delta)Z^{(q)}(b) \mathbb{W}^{(q)}(a-b)-Z^{(q)}(a)-q\delta\int_b^a\mathbb{W}^{(q)}(x-y)W^{(q)}(y)\ud y\right) \\
%&-\left((c-\delta) Z^{(q)}(b)\mathbb{W}^{(q)}(x-b)-Z^{(q)}(x)-q\delta\int_b^x\mathbb{W}^{(q)}(x-y)W^{(q)}(y)\ud y\right) \\
&=r^{(q)}(x) -\frac{\mathbb{W}^{(q)}(x-b)}{\mathbb{W}^{(q)}(a-b)} r^{(q)}(a)
\end{split}
\end{align}
where 
\begin{align*}
r^{(q)}(x)&:= \mathcal{R}^{(q,0)}(x) = Z^{(q)}(x)+q\delta\int_b^x\mathbb{W}^{(q)}(x-y)W^{(q)}(y)\ud y, \quad x \in \R, \; q \geq 0.
\end{align*}

Note that similar expressions to \eqref{cc0_general} and \eqref{cc0}  with $Z$ replaced with $W$ and $\overline{Z}$ can be computed using \eqref{useful_identity_W} and \eqref{lemma_useful_identity_Z_bar}, respectively.

%\begin{remark}
%\red{
%In the next result we have a similar identity to the one in Lemma \ref{lemma_useful_identity} but swapping the roles between $\mathbb{W}$ and $W$.}
%\blue{Kazu: Remove this remark? It will appear in the SP case.} \red{I see. OK.  Let us remove it.}
%\end{remark}

\section{Resolvent Measures} \label{section_resolvents}

%\subsection{Computation of the Resolvents.}
%(-\infty,-y)
%\red{[I am wondering if it is possible to compute the total costs for reflecting at $a$ (which is needed for the dividend problem with capital injection).  It is probably possible if we can compute the joint Laplace transform of the downcrossing time and the (original) refracted \lev process $\E_x [\exp (- q T_a^- + v V_{T_a^-})]$. Has anyone done it?  Probably it is easy to compute though using the scale function under change of measure]}
%\blue{[Yeah I have the computation I will add it asap, the detail is that it involves the function $\bar{Z}$ therefore I need a certain identity like (\ref{idenmain}) but for $\bar{Z}$ like as you suggested can be obtained by integrating (\ref{idenmain}). But it is a little messy.]}  \red{[That would be great.]}
%Let us denote the process $Y=\{Y_t:t\geq0\}$ given by $Y_t=X_t-\delta t$ and its law be denoted by $\bold{P}_x$ when issued from $x$. 

%\red{[Instead of defining the stopping times for three cases, how don't we define only $T$ for $U$ and say that $V_t = Y_t$ for all $0 \leq t \leq T_b^-$ and $V_t = U_t$ for all $0 \leq t \leq T_b^+$?]} \blue{[I am not sure Kazu, because when showing the identities I think its important to have $T_c^+$ for $V$ to avoid confusion?]} \red{[Oops.  I mean we only define $T$ for $V$? I do not know which is better, but we should reduce notations as much as possible?]}

%\red{[I defined $\kappa$ and $\tau$ in the previous section. So I guess we only need to define $T$ here?]}
%\blue{Ok.}
In this section, we study the resolvent measure and, as its byproducts, we also obtain the Laplace transform of the up-crossing time and the expected NPV of $L$ as defined in \eqref{net_present_value}.  %We fix $b >0$ for the rest of the paper.
Let us define the following set of stopping times %\red{JL: Maybe we should change from $c$ to $a$; $c$ is already used for the drift coefficient; in KL they use $a$.}
\[
T^+_a:=\inf\{t>0:V_t>a\} \quad \textrm{and} \quad T ^-_a:=\inf\{t>0:V_t<a\}, \quad a >0.
\]

%We use the scale functions $W$, $\mathbb{W}$,  for $X$, and $Y_t$, respectively.
%\subsection{Resolvents}

Our derivation of the results relies on the following remark on the connection with the drift-changed process $Y$ and the reflected process $U$.
\begin{remark} \label{remark_connection_Y_U} Recall the hitting times $\tau_b^-$ of $Y$  and $\kappa_b^+$ of $U$ as in \eqref{first_passage_time} and \eqref{def_kappa_time}, respectively.  Almost surely, under $\p_x$ for any $x \in \R$, we have $T_b^+ =\kappa_b^+$ and $V_t = U_t$ on $0 \leq t \leq T_b^+$; similarly, we have $T_b^- =\tau_b^-$ and $V_t = Y_t$  on $0 \leq t < T_b^-$ and $V_{T_b^--} + \Delta X_{T_b^-} = Y_{T_b^-}$ on $\{ T_b^- < \infty\}$.
\end{remark}

%Fix $x \leq a$. We compute the resolvent measure
%\begin{align*}
%R^{(q)}(x,a;B):=\e_x\bigg(\int_0^{T_a^+}e^{-qt}1_{\{ V_t \in B \}}\ud t\bigg), 
%\end{align*}
%%\red{[Should we avoid using $V^{(q)}$ because it may be confused with $V$? Say $R^{(q)}$ (which is now available?)]}\blue{Kazu: Agreed I have done the change.}
%for any Borel set $B$ on $[0,a]$.  

% Because the case $a \leq b$ reduces to the reflected \lev case \eqref{rsr}, we shall focus on the case $a > b$.

\begin{theorem}[Resolvent]\label{resol}
%For any $q\geq0$ and $0 \leq b\leq a$ and $x \leq a$, the resolvent measure admits a density \red{JL:  Actually, sorry.  I think we need to change it back from density to the measure (otherwise I do not know if the convergence for the unbounded var case to work).  Also, it may be safer to change to $q > 0$?}
%\blue{Kazu: Yeah I agree, but for the case $q=0$ we can do the following, we can prove that for $q>0$
%\[
%\E_x\left(\int_0^{T_a^+}e^{-qt}1_{\{V_t\in dz\}}dt\right)=w^{(q)}(a,z)\frac{r^{(q)}(x)}{r^{(q)}(a)} - w^{(q)}(x,z).
%\]
%And then by Monotone Convergence
%\[
%\E_x\left(\int_0^{T_a^+}1_{\{V_t\in dz\}}dt\right)=\lim_{q\to 0}\E_x\left(\int_0^{T_a^+}e^{-qt}1_{\{V_t\in dz\}}dt\right)=w^{(0)}(a,b,z)\frac{r^{(0)}(x, b)}{r^{(0)}(a, b)} - w^{(0)}(x,b,z).
%\]
%Right?} \red{[Yes, all the results hold for $q =0$ by monotone convergence with the understanding that it can happen that $\infty=\infty$. I do not see the point of repeating this argument each time and so, why don't we make a standing remark that all the results hold for $q=0$, and in each theorem we just prove for $q > 0$?]}
%\blue{Yeah I think it is easier to put it along the remark 2.1?}
For $q \geq 0$, $x \leq a$ and a Borel set $B$ on $[0,a]$,
\begin{align}\label{resol1}
\begin{split}
\e_x\bigg(\int_0^{T_a^+}e^{-qt}1_{\{ V_t \in B \}}\ud t\bigg) = \int_{B} \Big( w^{(q)}(a,z)\frac{r^{(q)}(x)}{r^{(q)}(a)} - w^{(q)}(x,z) \Big) \diff z,
%\\
%&-\left(W^{(q)}(x-z) 1_{\{ 0 < z < b \}} + \mathbb{W}^{(q)}(x-z) 1_{\{ b < z \}} +\delta 1_{\{0 < z < b\}} \int_{b}^x\mathbb{W}^{(q)}(x-u)W^{(q)\prime}(u-z)\ud u\right),
\end{split}
\end{align}
where, for all $0 \leq z \leq a$, %\red{JL: combined the $1_{\{ 0 < z < b \}}$ term below.}
\begin{align*}
w^{(q)}(x, z) %&:= W^{(q)}(x-z) 1_{\{ 0 < z < b \}} + \mathbb{W}^{(q)}(x-z) 1_{\{ b < z \}} +\delta 1_{\{0 < z < b\}} \int_{b}^x\mathbb{W}^{(q)}(x-u)W^{(q)\prime}(u-z)\ud u \\
&:= 1_{\{ 0 < z < b \}} \Big(W^{(q)}(x-z) +\delta \int_{b}^x\mathbb{W}^{(q)}(x-y)W^{(q)\prime}(y-z)\ud y \Big)+ 1_{\{ b < z < x\}} \mathbb{W}^{(q)}(x-z).
%r(x,b)&:=Z^{(q)}(x)+q\delta\int_b^x\mathbb{W}^{(q)}(x-y)W^{(q)}(y)\ud y, \quad x, b \in \R.
\end{align*}
%\red{JL: It is still correct, but I guess it is better to write the last indicator $1_{\{ b < z < x\}}$?}
\end{theorem}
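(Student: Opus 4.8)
The plan is to establish \eqref{resol1} first for $X$ of bounded variation and then to remove this restriction by the approximation scheme of Proposition \ref{prop_approximation}. Write $G(x)$ for the left-hand side of \eqref{resol1} and set $B':=B\cap(0,b)$. The entire computation is organized around the level $b$, the idea being to use Remark \ref{remark_connection_Y_U} to replace $V$ by the reflected process $U$ below $b$ and by the drift-changed process $Y$ above $b$, and to reduce everything to the single unknown $G(b)$. For $x\le b$, Remark \ref{remark_connection_Y_U} gives $V\equiv U$ on $[0,T_b^+]=[0,\kappa_b^+]$ with $V_{T_b^+}=b$ (the upward passage is continuous since $X$ is spectrally negative). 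Splitting the occupation integral at $\kappa_b^+$ and applying the strong Markov property, I get $G(x)=\e_x(\int_0^{\kappa_b^+}e^{-qt}1_{\{U_t\in B\}}\ud t)+\e_x(e^{-q\kappa_b^+})\,G(b)$; inserting the reflected resolvent \eqref{rsr} and the up-crossing transform \eqref{upcrossing_time_reflected} writes $G(x)$, for $x\le b$, as an explicit function plus $\frac{Z^{(q)}(x)}{Z^{(q)}(b)}G(b)$, linear in $G(b)$. By \eqref{z_below_zero} this remains valid for $x<0$, which is important because the landing points in the next step may be negative.

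For $b<x\le a$, Remark \ref{remark_connection_Y_U} identifies $V$ with $Y$ up to $T_b^-\wedge T_a^+=\tau_b^-\wedge\tau_a^+$, with $V_{T_b^-}=Y_{\tau_b^-}\le b$. Splitting at this time and using the strong Markov property yields $G(x)=\e_x(\int_0^{\tau_b^-\wedge\tau_a^+}e^{-qt}1_{\{Y_t\in B\}}\ud t)+\e_x(e^{-q\tau_b^-}G(Y_{\tau_b^-})1_{\{\tau_b^-<\tau_a^+\}})$. The first term is read off directly from the two-sided $Y$-resolvent \eqref{resolvent_density}, and accounts for the $1_{\{b<z<x\}}\mathbb{W}^{(q)}(x-z)$ branch of $w^{(q)}$. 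Into the second term I substitute the Step~1 expression for $G$ evaluated at the landing point $Y_{\tau_b^-}\le b$; this produces two pieces, namely $\e_x(e^{-q\tau_b^-}Z^{(q)}(Y_{\tau_b^-})1_{\{\tau_b^-<\tau_a^+\}})$, which is exactly \eqref{cc0} and carries the factor $r^{(q)}$, and the undershoot expectation $\e_x(e^{-q\tau_b^-}\rho^{(q)}(Y_{\tau_b^-};B')1_{\{\tau_b^-<\tau_a^+\}})$.

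The treatment of this last term, forced on us by the negative jumps that may carry $Y$ from above $b$ to anywhere below $b$ (possibly below $0$), is the crux and the step I expect to be the main obstacle. By Fubini it equals $\int_{B'}\e_x(e^{-q\tau_b^-}W^{(q)}(Y_{\tau_b^-}-z)1_{\{\tau_b^-<\tau_a^+\}})\ud z$, and for each fixed $z\le b$ the inner expectation is evaluated by the undershoot formula \eqref{undershoot_expectation} with $l=W^{(q)}(\cdot-z)$. The resulting double integral against $\Pi$ is precisely the object handled by Lemma \ref{lemma_useful_identity}(i), applied with $p=q$ and $v=z$, once with upper argument $x$ and once with $a$ (the $y$-ranges truncate to $[0,a-b]$ because $\mathbb{W}^{(q)}$ vanishes on the negative half-line). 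This is exactly where the new identities of Section \ref{section_scale_functions} pay off: they convert the Lévy-measure integral into the clean combination $W^{(q)}(x-z)+\delta\int_b^x\mathbb{W}^{(q)}(x-y)W^{(q)\prime}(y-z)\ud y$, i.e.\ the $1_{\{0<z<b\}}$ branch of $w^{(q)}(x,z)$, thereby avoiding any explicit appearance of $\Pi$ in the final answer.

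Combining the above expresses $G(x)$, for $b<x\le a$, as an explicit function plus a term proportional to the remaining unknown $G(b)$. Letting $x\downarrow b$, using continuity of $G$ together with $r^{(q)}(b)=Z^{(q)}(b)$ and $\mathbb{W}^{(q)}(0)=(c-\delta)^{-1}$, gives a single linear equation for $G(b)$ whose solution is proportional to $1/r^{(q)}(a)$; this is the origin of the factor $r^{(q)}(a)$ in the denominator of \eqref{resol1}. Back-substituting into the Step~1 and Step~2 formulas and collecting the $1_{\{0<z<b\}}$ and $1_{\{b<z<x\}}$ contributions into $w^{(q)}$ yields \eqref{resol1} for all $x\le a$ in the bounded variation case. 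Finally, for $X$ of unbounded variation I would take a strongly approximating sequence $X^{(n)}$ of bounded variation (Proposition \ref{prop_approximation}), apply the proven identity to each $V^{(n)}$, and pass to the limit: the left-hand sides converge via the a.s.\ uniform convergence $V^{(n)}\to V$ and of the first-passage times, while the right-hand sides converge by the convergence of the scale functions and, crucially, of their derivatives stated in Remark \ref{remark_strongly_approximating} (needed for the $\delta\int_b^x\mathbb{W}^{(q)}W^{(q)\prime}$ term). These limiting arguments are routine given the cited tools.
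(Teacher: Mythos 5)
Your bounded-variation argument is, step for step, the paper's own proof: the same split at the level $b$ via Remark \ref{remark_connection_Y_U}, the same use of \eqref{rsr} and \eqref{upcrossing_time_reflected} below $b$, of \eqref{resolvent_density} and \eqref{undershoot_expectation} above $b$, the reduction of the undershoot term to Lemma \ref{lemma_useful_identity}(i)--(ii) (which is exactly how the paper produces $w^{(q)}$ and $r^{(q)}$ without the L\'evy measure), and the determination of $G(b)$ by evaluating the $x\geq b$ equation at $x=b$ using $\mathbb{W}^{(q)}(0)=(c-\delta)^{-1}$. That part is correct and complete in outline.

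The gap is in the final paragraph, where you declare the passage to unbounded variation ``routine given the cited tools.'' It is not, and the paper spends a nontrivial portion of the proof on precisely the two points you skip. First, the a.s.\ uniform convergence $V^{(n)}\to V$ from Proposition \ref{prop_approximation} does \emph{not} by itself give $T^+_{a,n}\to T^+_a$: first-passage times are not continuous functionals of paths, and one must prove $T^+_{a-}=T^+_a$ a.s.; the paper does this by bounding $T^+_a-T^+_{a-\varepsilon}$ using the fact that $V$ evolves as the spectrally negative process $Y$ on $(b,a)$, the subordinator property of $s\mapsto\tau_s^+$, and the regularity of $Y$ at $b$ from above. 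Second, even with $V^{(n)}_t\to V_t$, the integrand $1_{\{V^{(n)}_t\in B\}}$ need not converge to $1_{\{V_t\in B\}}$ unless $\p_x(V_t\in\partial B)=0$ for a.e.\ $t$ (and similarly one needs $\p_x(\sup_{s\leq t}V_s=a)=0$ to control the upper limit of integration). The paper establishes the former in a dedicated lemma (Lemma \ref{lemma_mass_zero}) by decomposing the time axis into the alternating reflection/refraction excursion intervals $[S_b^-(n),S_b^+(n+1))$ and $[S_b^+(n),S_b^-(n))$ and invoking the known atomlessness of the resolvents of $U$ and of the refracted process $A$ on each piece. Without these two ingredients your dominated-convergence step does not go through, so the unbounded-variation case remains unproved as written.
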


%{\color{blue} Agreed: 
%\[
%\bold{E}_{x-c}\left(\int_0^{\tau_{-c}^+\wedge\tau_0^-}e^{-qt}1_{\{Y_t + c\in B\cap [c,0]\}}dt\right).
%\]
%}
\begin{proof}
We shall prove the result for $q > 0$ because the case $q = 0$ can be obtained by monotone convergence and the continuity of the scale function in $q$ as in Lemma 8.3 of \cite{K}.

(i) Suppose $X$ is of bounded variation, and
let $f^{(q)}(x,a;B)$ be the left hand side of \eqref{resol1}.  %Due to the reflection at the lower barrier $0$, it is clear that $V^{(q)}(x,a;B)= V^{(q)}(0,a;B)$ for any $x < 0$.  

For $x< b$, using the strong Markov property, Remark \ref{remark_connection_Y_U}, \eqref{rsr}, and \eqref{upcrossing_time_reflected}, 
\begin{align}\label{iden6}
f^{(q)}(x,a;B)&=\e_x\bigg(\int_0^{\kappa_b^+}e^{-qt}1_{\{U_t\in B\}}\ud t\bigg)+\e_x\big(e^{-q\kappa_b^+}\big) f^{(q)}(b,a;B)\notag\\
&=\frac{Z^{(q)}(x)}{Z^{(q)}(b)} [\rho^{(q)}(b;B)+ f^{(q)}(b,a;B)] -\rho^{(q)}(x;B).\end{align}
%\red{[I guess we can avoid using the shorthand notation $\varphi_0$?  Not so frequently used?]}\blue{Kazu: I am not sure, it helps avoiding writing huge identities at the end, of the Theorem [I added what you suggested.]}
%\red{[Actually $\varphi$ is used for $\varphi(q)$.  How about $\rho$?]}\blue{sure}
On the other hand, for $x \geq b$, again by the strong Markov property and Remark \ref{remark_connection_Y_U}, together with \eqref{resolvent_density} and \eqref{undershoot_expectation}, 
% \red{[below maybe we can just use $\E$ instead of $\bold{E}$?]}
\begin{align}\label{potentiala}
\begin{split}
f^{(q)}&(x,a;B)=\E_x\Big(\int_0^{\tau_a^+\wedge\tau_b^-}e^{-qt}1_{\{Y_t\in B\}}\ud t\Big)+\E_x\left(e^{-q\tau_b^-}f^{(q)}(Y_{\tau_b^-},a;B)1_{\{\tau_b^-<\tau_a^+\}}\right)\\
&=\int_{b}^{a}1_{\{y\in B\}}\left\{\frac{\mathbb{W}^{(q)}(x-b)\mathbb{W}^{(q)}(a-y)}{\mathbb{W}^{(q)}(a-b)}-\mathbb{W}^{(q)}(x-y)\right\}\ud y \\
&+\int_0^{a-b}\int_{(-\infty,-y)} f^{(q)}(y+u+b,a;B)\left\{\frac{\mathbb{W}^{(q)}(x-b)\mathbb{W}^{(q)}(a-b-y)}{\mathbb{W}^{(q)}(a-b)}-\mathbb{W}^{(q)}(x-b-y)\right\}\Pi(\ud u) \ud y.
\end{split}
\end{align}
Here, by setting $x=b$, dividing both sides by $\mathbb{W}^{(q)}(0)/\mathbb{W}^{(q)}(a-b)= [(c-\delta) \mathbb{W}^{(q)}(a-b)]^{-1}$, and substituting (\ref{iden6}), 
\begin{align} \label{recursion_W_time_v}
\begin{split}
&(c-\delta) {\mathbb{W}^{(q)}(a-b)} f^{(q)}(b,a;B)\\ 
%&=\int_{0}^{a-b}1_{\{y+b\in B\}}  {\mathbb{W}^{(q)}(a-b-y)} \ud y +\int_0^{a-b}\int_{-\infty}^{-y}V^{(q)}(y+u+b,a;B) \mathbb{W}^{(q)}(a-b-y) \Pi(\ud u) \ud y \\
&=\int_{b}^{a}1_{\{y\in B\}} \mathbb{W}^{(q)}(a-y)  \ud y \\
&+\int_0^{a-b}\int_{(-\infty,-y)}\Big(\frac{Z^{(q)}(y+u+b)}{Z^{(q)}(b)} [\rho^{(q)}(b;B) + f^{(q)}(b,a; B)]-\rho^{(q)}(y+u+b;B) \Big) {\mathbb{W}^{(q)}(a-b-y)}\Pi(\ud u) \ud y.
\end{split}
\end{align}
In particular, by Fubini's theorem and \eqref{useful_identity_W},
\begin{align} \label{varphi_time_W_formula}
\begin{split}
&\int_0^{a-b} \int_{(-\infty,-y)}\rho^{(q)}(y+u+b;B)\mathbb{W}^{(q)}(a-b-y)  \Pi (\ud u) \ud y\\
&=\int_0^{\infty}\int_{(-\infty,-y)}\int_0^{\infty}W^{(q)}(y+u+b-z)\mathbb{W}^{(q)}(a-b-y)1_{\{z\in B\}}\ud z\Pi(\ud u)\ud y\\
%&=\int_0^{\infty}\int_{-\infty}^{-y}\int_a^bW^{(q)}(y+u+b-z)\mathbb{W}^{(q)}(c-b-y)1_{\{z\in B\}} \ud z\Pi(\ud u)\ud y\notag\\
&=\int_0^b 1_{\{z\in B\}}\int_0^{\infty}\int_{(-\infty,-y)}W^{(q)}(y+u+b-z)\mathbb{W}^{(q)}(a-b-y)\Pi(\ud u)\ud y \ud z \\
&=\int_0^b 1_{\{z\in B\}}\left((c-\delta) W^{(q)}(b-z)\mathbb{W}^{(q)}(a-b)-W^{(q)}(a-z)-\delta\int_{b}^{a}\mathbb{W}^{(q)}(a-y)W^{(q)\prime}(y-z)\ud y\right) \ud z.
\end{split}
\end{align}
By substituting \eqref{lemma_useful_identity_c} and \eqref{varphi_time_W_formula} in \eqref{recursion_W_time_v} and solving for $f^{(q)}(b,a;B)$, we obtain
\begin{align}\label{potentialb}
f^{(q)}(b,a;B)&=Z^{(q)}(b)\frac{\int_{B} w^{(q)}(a,z) \ud z}{r^{(q)}(a)}-\rho^{(q)}(b;B).
\end{align}
%where for $x>b$
%\[
%\phi_b(x;B)=\int_b^x 1_{\{z\in B\}} \mathbb{W}^{(q)}(x-z)\ud z, \qquad\text{and}\qquad \varphi_0(y,x;B)=\int_0^y 1_{\{z\in B\}} W^{(q)}(x-z)\ud z.
%\] 
%\red{[remove this short-hand notation?]}
%Substituting (\ref{potentialb}) in (\ref{potentiala}), the claim holds. 

Substituting (\ref{potentialb}) in (\ref{iden6}), the claim holds for $x < b$. 
\par For $x > b$, the equation (\ref{potentiala}) gives
\begin{align} \label{f_above_b}
\begin{split}
f^{(q)}(x,a;B)
&=\int_b^a 1_{\{y\in B\}}\left\{\frac{\mathbb{W}^{(q)}(x-b)\mathbb{W}^{(q)}(a-y)}{\mathbb{W}^{(q)}(a-b)}-\mathbb{W}^{(q)}(x-y)\right\}\ud y \\
&+\int_0^{a-b}\int_{(-\infty,-y)}\int_B \Big[w^{(q)}(a,z)\frac{Z^{(q)}(y+u+b)}{r^{(q)}(a)} - W^{(q)}(y+u+b-z)\Big] \diff z \\ &\qquad \times \left\{\frac{\mathbb{W}^{(q)}(x-b)\mathbb{W}^{(q)}(a-b-y)}{\mathbb{W}^{(q)}(a-b)}-\mathbb{W}^{(q)}(x-b-y)\right\}\Pi(\ud u) \ud y 
\end{split}
\end{align}
where  the second integral equals, by  \eqref{useful_identity_W}, \eqref{cc0}, and \eqref{varphi_time_W_formula},
\begin{align*}
%&\int_{0}^{a-b}1_{\{y+b\in B\}}\left\{\frac{\mathbb{W}^{(q)}(x-b)\mathbb{W}^{(q)}(a-b-y)}{\mathbb{W}^{(q)}(a-b)}-\mathbb{W}^{(q)}(x-b-y)\right\}\ud y \\
&\frac{\int_B w^{(q)}(a,z)\diff z}{r^{(q)}(a)}\int_0^{a-b}\int_{(-\infty,-y)}Z^{(q)}(y+u+b)\left\{\frac{\mathbb{W}^{(q)}(x-b)\mathbb{W}^{(q)}(a-b-y)}{\mathbb{W}^{(q)}(a-b)}-\mathbb{W}^{(q)}(x-b-y)\right\}\Pi(\ud u) \ud y\notag\\
&- \int_0^{a-b}\int_{(-\infty,-y)}\rho^{(q)}(y+u+b;B)\left\{\frac{\mathbb{W}^{(q)}(x-b)\mathbb{W}^{(q)}(a-b-y)}{\mathbb{W}^{(q)}(a-b)}-\mathbb{W}^{(q)}(x-b-y)\right\}\Pi(\ud u) \ud y\notag\\
%&=\int_{0}^{a-b}1_{\{y+b\in B\}}\left\{\frac{\mathbb{W}^{(q)}(x-b)\mathbb{W}^{(q)}(a-b-y)}{\mathbb{W}^{(q)}(a-b)}-\mathbb{W}^{(q)}(x-b-y)\right\}\ud y \\
&= \frac{\int_B w^{(q)}(a,z)\diff z}{r^{(q)}(a)}\left(r^{(q)}(x)-\frac{\mathbb{W}^{(q)}(x-b)}{\mathbb{W}^{(q)}(a-b)}r^{(q)}(a)\right)\notag\\
&+ \frac{\mathbb{W}^{(q)}(x-b)}{\mathbb{W}^{(q)}(a-b)} \int_0^b 1_{\{ z \in B\}}\left(W^{(q)}(a-z)+\delta\int_{b}^{a}\mathbb{W}^{(q)}(a-y)W^{(q)\prime}(y-z)\diff y\right) \diff z \notag\\
&- \int_0^b 1_{\{z \in B\}}\left(W^{(q)}(x-z)+\delta\int_{b}^{x}\mathbb{W}^{(q)}(x-y)W^{(q)\prime}(y-z)\diff y\right)\diff z.
%&=\int_B w^{(q)}(a,z)\diff z\frac{r^{(q)}(x)}{r^{(q)}(a)}-\int_B w^{(q)}(x,z)\diff z.
\end{align*}
Substituting these identities in \eqref{f_above_b} and after some computations, we obtain that  the term involving ${\mathbb{W}^{(q)}(x-b)} / {\mathbb{W}^{(q)}(a-b)}$ vanishes, and the claim follows.

(ii) We now extend the result to the case of unbounded variation. 
 Let $V^{(n)}$ be the refracted-reflected process associated to $X^{(n)}$ and $T_{a,n}^+$ its first passage time above $a$,  of a strongly approximating sequence (of bounded variation) for $X$ as in Proposition \ref{prop_approximation}.

First, it can easily be shown that $T_{a,n}^+$ and $T_a^+$ are both finite a.s.\ because these are bounded from above by the upcrossing times at $a$ for the  reflected process of the drift-changed process $Y$ (without refraction) as in \eqref{reflected_levy} with $X$ replaced with $Y$ (which are known to be finite a.s.); this will be confirmed in  Corollary \ref{corollary_one_sided} below.
%\blue{Kazu: I have a small question: If we refract then we delete a small drift, which means that the time to go above $a$ should be greater than the one for the process without refraction? Do you mean bounding them by those of the reflected process at $0$ of $Y$?}

Now in order to show $T_{a,n}^+ \xrightarrow{n \uparrow \infty} T_a^+$  holds a.s, as in page 212 of \cite{B}, it suffices to show
\begin{align}
T_{a-}^+ = T_{a}^+ \quad a.s. \label{T_a_cont}
\end{align}
Indeed, for $0 < \varepsilon < a-b$, because when the process takes values in $(b,a)$ then $V$ is a spectrally negative \lev process $Y$, we have
\begin{align}
0 \leq T_{a}^+ - T_{a-\varepsilon}^+ \leq (\tau_a^+ 1_{\{ \tau_b^- > \tau_a^+\}}) \circ \theta_{T^+_{a-\varepsilon}} +  (1_{\{ \tau_b^- < \tau_a^+ \}}   \circ \theta_{T^+_{a-\varepsilon}}) T_a^+,\label{diff_T_a}
\end{align}
%\red{[JL: I added parenthesis etc to say more precisely what I was thinking and changed some from $T$ to $\tau$]}
where $\theta$ is the time-shift operator. The process  $\{ \tau_s^+; s \geq a- \varepsilon \}$ is a subordinator (that is a.s.\ continuous at time $a$) and hence 
$(\tau_a^+ 1_{\{ \tau_b^- > \tau_a^+\}}) \circ \theta_{T^+_{a-\varepsilon}}$ vanishes in the limit as $\varepsilon \downarrow 0$. On the other hand, 
%as shown in Corollary \ref{corollary_one_sided} below, (i) directly implies $T_{a,n}^+ < \infty$ a.s.\ and $V^{(n)}$ is a strongly approximating sequence of $V$, and hence $T^+_a < \infty$ a.s. Moreover
 by the regularity of the spectrally negative \lev process $Y$, we have $1_{\{ \tau_b^- < \tau_a^+ \}}  \circ \theta_{T^+_{a-\varepsilon}} \rightarrow 0$ as $\varepsilon \downarrow 0$, showing (together with the finiteness of $T_a^+$) that the second term on the right hand side of \eqref{diff_T_a} vanishes as well. In sum,  we have \eqref{T_a_cont} and the convergence $T_{a,n}^+ \xrightarrow{n \uparrow \infty} T_a^+$ holds.

Now, by Remark  \ref{remark_strongly_approximating} and dominated convergence, it is sufficient to show that $\p_x ( V_t \in \partial B)$ and $\p_x( \sup_{0\leq s \leq t}V_s = a )$ vanish for Lebesgue a.e.\ $t > 0$; similar results have been  obtained in the proof of Theorem 6 of  \cite{KL} for the refracted \lev process.
For the former, we give the following result.
 \begin{lemma}  \label{lemma_mass_zero}We have $\p_x ( V_t = y)=0$  for $y \in [0, \infty)$ and Lebesgue a.e.\ $t > 0$.
\end{lemma}
\begin{proof}  We will show that the result holds for the case $x < b$; the case $x \geq b$ can be shown similarly.
\par Let us define a sequence of stopping times $0 =: S_b^-(0) < S_b^+(1) < S_b^- (1) < S_b^+(2) < S_b^-(2) < \cdots < S_b^+(n) < S_b^- (n) < \cdots$ as follows: 
\begin{align*}
%S_b^+(1)&=\inf\{t\geq0:V_t\geq b\}, \quad S_b^-(1)=\inf\{t\geq S_b^+(1):V_t=0\}, \\
S_b^+(n)&:=\inf\{t > S_b^-(n-1):V_t > b\} \text{ and } S_b^-(n):=\inf\{t > S_b^+(n):V_t =0\}, \quad n \geq 1.
\end{align*}
Then we have
\begin{align*}
\mathbb{P}_x(V_t=y)&= \p_x \left(V_t=y, t\in[0,S_b^+(1))\right) \\
&+\sum_{n\geq 1}\Big[ \mathbb{P}_x\left(V_t=y, t\in[S_b^+(n),S_b^-(n))\right) + \mathbb{P}_x\left(V_t=y, t\in[S_b^-(n),S_b^+(n+1))\right) \Big].\end{align*}
By Remark \ref{remark_connection_Y_U} and \eqref{rsr}, we have $\p_x \left(V_t=y, t\in[0,S_b^+(1))\right) = \p_x \left(U_t=y, t \in[0,\kappa_b^+)\right) = 0$ for a.e. $t > 0$.
In addition,
\begin{align*}
\mathbb{P}_x\left(V_t=y, t\in[S_b^+(n),S_b^-(n))\right) = \mathbb{P}_x\left[ \p_x (V_t=y, t\in[S_b^+(n),S_b^-(n)) | S_b^+(n)  ) \right]. \end{align*}
With $\tilde{A}_{t}$ being a refracted process starting at $b$ that is independent of $\mathcal{F}_{S_b^+(n)}$,
\begin{align*}
\p_x (V_t=y, t\in[S_b^+(n),S_b^-(n)) |S_b^+(n)  )  &\leq \p_x (\tilde{A}_{t -S_b^+(n)}=y, t\in[S_b^+(n),\infty) | S_b^+(n)  ) \\&= \p_b ( A_{t-s} = y ) |_{s =S_b^+(n)}.
\end{align*}
Hence,
\begin{align*}
\mathbb{P}_x\left(V_t=y, t\in[S_b^+(n),S_b^-(n))\right) \leq \int_0^t \p_x ( S_{b}^+(n) \in \diff s ) \p_b (A_{t-s} = y) = 0.\end{align*}
Similarly, we have $\mathbb{P}_x\left(V_t=y, t\in[S_b^-(n),S_b^+(n+1))\right) = 0$. Summing up these, we have that $\p_x (V_t = y) = 0$ for a.e. $t > 0$.
\end{proof}

By this lemma, $\p_x \{ V_t \in \partial B\} = 0$ for a.e.\ $t > 0$. The proof of $\p_x\{ \sup_{0 \leq s \leq t}V_s= a \} = 0$ can be similarly done using the fact that $\p \{ \sup_{0 \leq s \leq t}A_s = a\}=0$ for a.e. $t > 0$ as proved in Theorem 6 of \cite{KL}.

%\begin{align}\label{potentialc}
%V^{(q)}(x,c;B)&=\left(\phi_b(c;B)+\varphi_a(b,c;B)+\delta\int_{B\cap[a,b]}\int_{b}^c\mathbb{W}^{(q)}(c-u)W^{(q)\prime}(u-z)dudz\right)\notag\\
%&\times\left\{\frac{Z^{(q)}(x-a)+\delta\int_b^x\mathbb{W}^{(q)}(c-y)Z^{(q)\prime}(y-a)dy}{Z^{(q)}(c-a)+\delta\int_b^c\mathbb{W}^{(q)}(c-y)Z^{(q)\prime}(y-a)dy}\right\}\notag\\
%&-\left(\phi_b(x;B)+\varphi_a(x;B)+\delta\int_{B\cap[a,b]}\int_{b}^x\mathbb{W}^{(q)}(x-u)W^{(q)\prime}(u-z)dudz\right),
%\end{align}
\end{proof}

\begin{remark}
In particular, when $\delta = 0$ or $a=b$ in Theorem \ref{resol},  we recover \eqref{rsr}.
%therefore in the case $\delta=0$ (\ref{resol1}) becomes:
%\[
%V^{(q)}(x,c;B)=\varphi_a(c;B)\frac{Z^{(q)}(x)}{Z^{(q)}(c)}-\varphi_0(x;B).
%\]
%This matches Theorem 1 of \cite{APP2007}.
\end{remark}
 %\begin{align*}
%R^{(q)}(x,B):=\e_x\bigg(\int_0^{\infty}e^{-qt}1_{\{ V_t \in B \}}\ud t\bigg), 
%\end{align*}
%for any $ x \geq 0$ and any Borel set $B$ on $[0,\infty)$. 
By taking $a$ to $\infty$ in the previous result,  we shall obtain the following.
\begin{corollary}\label{corollaryresol1}
Fix $x \in \R$ and any Borel set $B$ on $[0,\infty)$.

(i) For $q > 0$,
\begin{align*} %\label{resolvent_infty}
\e_x\bigg(\int_0^{\infty}e^{-qt}1_{\{ V_t \in B \}}\ud t\bigg) &= \int_B \Big( \frac { e^{-\varphi(q)z}1_{\{ b < z \}} +\delta 1_{\{ 0 < z < b \}}\int_{b}^{\infty}e^{-\varphi(q)u}W^{(q)\prime}(u-z)\ud u  }  {\delta q \int_b^\infty e^{-\varphi(q)y}W^{(q)}(y)\ud y}{r^{(q)}(x)} - w^{(q)}(x,z) \Big) \diff z. %\left(\mathbb{W}^{(q)}(x-z) 1_{\{ b < z \}} +W^{(q)}(x-z) 1_{\{ 0 < z < b \}}+\delta 1_{\{0 < z < b\}} \int_{b}^x\mathbb{W}^{(q)}(x-u)W^{(q)\prime}(u-z)\ud u\right).
\end{align*}
(ii) For $q = 0$ and $\psi_Y'(0+) > 0$ (or $Y_t \xrightarrow{t \uparrow \infty} \infty$ a.s.), then
\begin{align*}
\e_x\bigg(\int_0^{\infty} 1_{\{ V_t \in B \}}\ud t\bigg) &= \int_B \Big( \frac { 1_{\{ b < z \}} + 1_{\{ 0 < z < b \}} \big( 1- \delta^{-1}W (b-z)   \big)  }  {\psi_Y'(0+)} - w^{(0)}(x,z) \Big)\diff z. 
\end{align*}
For $q = 0$ and  $\psi_Y'(0+) \leq 0$, it becomes infinity given $Leb(B) > 0$.
\end{corollary}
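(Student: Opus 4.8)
The plan is to let $a \uparrow \infty$ in Theorem \ref{resol} and to identify the limit of each side. First I would check that $T_a^+ \uparrow \infty$ $\px$-a.s.\ as $a \uparrow \infty$. Indeed $a \mapsto T_a^+$ is nondecreasing, and since $V$ has c\`adl\`ag paths it is bounded on every compact time interval; hence for each fixed $t$ we have $a > \sup_{0 \leq s \leq t} V_s$ for all $a$ large enough, which forces $T_a^+ \geq t$. Applying \eqref{resol1} to $B \cap [0,a]$, the integrand $e^{-qt} 1_{\{ t < T_a^+ \}} 1_{\{ V_t \in B \cap [0,a] \}}$ then increases pointwise to $e^{-qt} 1_{\{ V_t \in B \}}$, and monotone convergence shows that the left-hand side increases to $\ex ( \int_0^\infty e^{-qt} 1_{\{ V_t \in B \}} \diff t )$, the quantity we want (finite for $q>0$, being bounded by $1/q$).

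Since the two sides of \eqref{resol1} agree for each $a$, the right-hand side converges to the same limit, and the task becomes to move the limit inside the $z$-integral and to compute the pointwise limit of the occupation density $g_a(z) := w^{(q)}(a,z) r^{(q)}(x)/r^{(q)}(a) - w^{(q)}(x,z)$. For the interchange I would use monotonicity: the probabilistic interpretation of $g_a$ gives $\int_B g_{a'}(z) \diff z \geq \int_B g_a(z) \diff z$ for all $a' > a$ and all Borel $B \subseteq [0,a]$, whence $g_a(z)$ is nondecreasing in $a$ for a.e.\ $z$, so monotone convergence applies. For the pointwise limit I would invoke the asymptotics \eqref{W_q_limit}: factoring $e^{-\varphi(q)a}$ out of both $r^{(q)}(a)$ and $w^{(q)}(a,z)$, the strict inequality $\varphi(q) > \Phi(q)$ (valid for $q>0$) kills every term carrying the subdominant rate $e^{\Phi(q)\cdot}$, while dominated convergence in the remaining $y$-integrals (whose integrands decay like $e^{-(\varphi(q)-\Phi(q))y}$) produces the numerator and denominator displayed in part (i); the common factor $\psi_Y'(\varphi(q))^{-1}$ cancels.

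For part (ii) I would pass to the limit $q \downarrow 0$ in the formula of part (i), justified by continuity of the scale functions in $q$ (Lemma 8.3 of \cite{K}) together with monotone convergence in $t$. The key simplification of the denominator uses the Laplace identity $\int_0^\infty e^{-\varphi(q)y} W^{(q)}(y)\diff y = 1/(\delta\varphi(q))$, which follows from \eqref{scale_function_laplace} and the relation $\psi(\varphi(q)) - q = \delta\varphi(q)$; thus $\delta q \int_b^\infty e^{-\varphi(q)y} W^{(q)}(y)\diff y = q/\varphi(q) - \delta q \int_0^b e^{-\varphi(q)y} W^{(q)}(y)\diff y \to \psi_Y'(0+)$, because $q/\varphi(q) \to \psi_Y'(0+)$ when $\psi_Y'(0+) > 0$ (so that $\varphi(0)=0$). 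The numerator is then reduced by an integration by parts in $\int_b^\infty e^{-\varphi(q)u} W^{(q)\prime}(u-z)\diff u$, whose boundary term supplies the $W(b-z)$ appearing in the stated formula, while $r^{(q)}(x) \to Z^{(0)}(x) = 1$. When $\psi_Y'(0+) \leq 0$ instead $q/\varphi(q) \to 0$, so the denominator of the density tends to $0$ while its numerator stays bounded away from $0$; hence the density diverges and the undiscounted resolvent is $+\infty$ on any $B$ of positive Lebesgue measure, reflecting the non-transience of $V$ in this regime.

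The hard part will be the second paragraph: simultaneously justifying the interchange of $\lim_{a\uparrow\infty}$ with the $z$-integral and carrying out the asymptotic analysis of $g_a(z)$, in particular isolating the dominant rate $e^{\varphi(q)a}$ and controlling the $y$-integrals uniformly in $a$. The monotonicity of $g_a$ in $a$ is precisely what makes the interchange clean; without it one would be forced to produce an explicit $a$-uniform dominating function.
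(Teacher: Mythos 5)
Your proposal follows the paper's proof essentially verbatim: let $a \uparrow \infty$ in Theorem \ref{resol}, identify $\lim_{a}w^{(q)}(a,z)/r^{(q)}(a)$ via the asymptotics \eqref{W_q_limit} together with dominated convergence in the residual $y$-integrals (the paper normalizes by $\mathbb{W}^{(q)}(a)$ rather than factoring out $e^{\varphi(q)a}$, which is the same computation), and then obtain (ii) by letting $q \downarrow 0$ using the identity $\psi(\varphi(q))-q=\delta\varphi(q)$ for the denominator and the Laplace transform of $W^{(q)\prime}$ for the numerator, exactly as in the paper. The only material you add is the explicit justification (monotonicity in $a$ of the occupation density and of $T_a^+$) for interchanging $\lim_{a\uparrow\infty}$ with the $z$-integral and for the convergence of the left-hand side, steps the paper leaves implicit.
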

\begin{proof}
(i) We have
\begin{align*}
\frac {w^{(q)}(a, z)} {r^{(q)}(a)} &=  \frac {\mathbb{W}^{(q)}(a)^{-1} w^{(q)}(a, z)} {\mathbb{W}^{(q)}(a)^{-1} r^{(q)}(a)} \xrightarrow{a \uparrow \infty}  \frac { e^{-\varphi(q)z}1_{\{ b < z \}} +\delta 1_{\{ 0 < z < b \}}\int_{b}^{\infty}e^{-\varphi(q)u}W^{(q)\prime}(u-z)\ud u  }  {\delta q \int_b^\infty e^{-\varphi(q)y}W^{(q)}(y)\ud y}.\end{align*}
Here the convergence follows by dominated convergence thanks to the bound $\mathbb{W}^{(q)}(a-y)/\mathbb{W}^{(q)}(a-b) \leq  e^{-\varphi(q)(y-b)}$ by \eqref{W_q_limit}, and from Exercise 8.5 (i) in \cite{K} and by \eqref{W_q_limit}, that for all $z \in \R$, 
\[
\lim_{a\to\infty}\frac{Z^{(q)}(a)}{W^{(q)}(a)}=\frac{q}{\Phi(q)}, \quad\lim_{a\to\infty}\frac{\mathbb{W}^{(q)}(a-z)}{\mathbb{W}^{(q)}(a)}=e^{-\varphi(q)z}, \quad
\lim_{a\to\infty}\frac{W^{(q)}(a-z)}{\mathbb{W}^{(q)}(a)}=\lim_{a\to\infty}\frac{Z^{(q)}(a-z)}{\mathbb{W}^{(q)}(a)}=0.
\]
%Therefore we have the following limit
%\[
%\lim_{c\to\infty}\frac{Z^{(q)}(c-a)}{\mathbb{W}^{(q)}(c)}=0.
%\]
%By using the computed limits in (\ref{resol1}) the result follows.

(ii) By monotone convergence we shall take $q \rightarrow 0$ in (i). We have
\begin{align*}
\delta q \int_b^\infty e^{-\varphi(q)y}W^{(q)}(y)\ud y = \delta q \Bigg(  \frac 1 {\psi(\varphi(q))-q }- \int_0^b e^{-\varphi(q)y}W^{(q)}(y)\ud y \Bigg) = \frac q {\varphi(q)} - \delta q \int_0^b e^{-\varphi(q)y}W^{(q)}(y)\ud y. 
\end{align*}
When $\psi_Y'(0+) \geq 0$, then $\varphi(0) = 0$ and hence $\delta q \int_b^\infty e^{-\varphi(q)y}W^{(q)}(y)\ud y \xrightarrow{q \downarrow 0} \psi_Y'(0+)$;
when $\psi_Y'(0+) < 0$, then $\varphi(0) > 0$ and $\delta q \int_b^\infty e^{-\varphi(q)y}W^{(q)}(y)\ud y \xrightarrow{q \downarrow 0} 0$.
On the other hand, using (8.20) of \cite{K},
\begin{align*}
\int_{b}^{\infty}e^{-\varphi(q)u}W^{(q)\prime}(u-z)\ud u &= e^{-\varphi(q) z}\int_{b-z}^{\infty}e^{-\varphi(q)y}W^{(q)\prime}(y)\ud y \\ &= e^{-\varphi(q) z} \Big( \frac {\varphi(q)} {\psi(\varphi(q))-q} - W^{(q)} (0)  - \int^{b-z}_0e^{-\varphi(q)y}W^{(q)\prime}(y)\ud y \Big) 
\\ &= e^{-\varphi(q) z} \Big( \delta^{-1} - W^{(q)} (0)  - \int^{b-z}_0e^{-\varphi(q)y}W^{(q)\prime}(y)\ud y \Big) \\
&\xrightarrow{q \downarrow 0} e^{-\varphi(0) z} \Big( \delta^{-1} - W (0)  - \int^{b-z}_0e^{-\varphi(0)y}W^{\prime}(y)\ud y \Big),
\end{align*}
which simplifies to $\delta^{-1} - W (b-z)$ when $\varphi(0) = 0$.
Combining these, we have the result for $q = 0$.
\end{proof}

%Following the proof of the last theorem we  obtain the following corollary. 
Another corollary can be obtained by setting $B = [0,a]$ in \eqref{resol1}.
\begin{corollary}[One-sided exit] \label{corollary_one_sided}
For any $q\geq0$ and $x \leq a$, we have 
\begin{align}
\mathbb{E}_x\left(e^{-qT_a^+} \right)=\frac{r^{(q)}(x)}{r^{(q)}(a)}.\notag
\end{align}
In particular, $T_a^+ < \infty$ $\p_x$-a.s.
\end{corollary}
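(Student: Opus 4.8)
The plan is to read off this identity as the special case $B=[0,a]$ of the resolvent formula \eqref{resol1}. With that choice the left-hand side collapses: since $V$ is reflected at $0$ it stays nonnegative, and by the very definition of $T_a^+$ we have $V_t\le a$ for every $t<T_a^+$, so $1_{\{V_t\in[0,a]\}}=1$ identically on $[0,T_a^+)$. Consequently, for $q>0$,
\[
\e_x\Big(\int_0^{T_a^+}e^{-qt}\,\ud t\Big)=\e_x\Big(\frac{1-e^{-qT_a^+}}{q}\Big)=\frac{1-\e_x\big(e^{-qT_a^+}\big)}{q},
\]
with the usual convention $e^{-q\infty}=0$ on $\{T_a^+=\infty\}$; note this step needs no a priori finiteness of $T_a^+$.

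The heart of the matter is then to show that the right-hand side of \eqref{resol1}, namely $\tfrac{r^{(q)}(x)}{r^{(q)}(a)}\int_0^a w^{(q)}(a,z)\,\ud z-\int_0^a w^{(q)}(x,z)\,\ud z$, equals $\big(1-r^{(q)}(x)/r^{(q)}(a)\big)/q$. Since $w^{(q)}(x,z)=0$ for $z>x$, everything reduces to the single integral identity
\[
\int_0^a w^{(q)}(x,z)\,\ud z=\frac{r^{(q)}(x)-1}{q},\qquad x\le a,
\]
after which the two displayed expressions match, the factor $q$ cancels, and solving for $\e_x(e^{-qT_a^+})$ yields $r^{(q)}(x)/r^{(q)}(a)$.

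To establish this integral identity I would integrate $w^{(q)}(x,z)$ term by term. For $x>b$, the $1_{\{0<z<b\}}W^{(q)}(x-z)$ piece gives $\overline{W}^{(q)}(x)-\overline{W}^{(q)}(x-b)$, the $1_{\{b<z<x\}}\mathbb{W}^{(q)}(x-z)$ piece gives $\overline{\mathbb{W}}^{(q)}(x-b)$, and, after a Fubini interchange, the $\delta$-term becomes $\delta\int_b^x\mathbb{W}^{(q)}(x-y)\big[W^{(q)}(y)-W^{(q)}(y-b)\big]\,\ud y$. Comparing with $r^{(q)}(x)=Z^{(q)}(x)+q\delta\int_b^x\mathbb{W}^{(q)}(x-y)W^{(q)}(y)\,\ud y$, the bulk of the terms cancel and, after the shift $y\mapsto y-b$, the identity reduces to
\[
\overline{\mathbb{W}}^{(q)}(x-b)-\overline{W}^{(q)}(x-b)=\delta\int_0^{x-b}\mathbb{W}^{(q)}(x-b-y)W^{(q)}(y)\,\ud y,
\]
which is exactly \eqref{RLqp} at $p=q$ and argument $x-b$. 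For $x\le b$ only the $W^{(q)}(x-z)$ term survives and the identity collapses to $\overline{W}^{(q)}(x)=(Z^{(q)}(x)-1)/q$, true by the definition of $Z^{(q)}$.

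Finally, the ``in particular'' assertion together with the $q=0$ case is obtained by letting $q\downarrow 0$: the left side converges to $\p_x(T_a^+<\infty)$ by monotone convergence, while $r^{(q)}(x)/r^{(q)}(a)\to r^{(0)}(x)/r^{(0)}(a)=1$ by the continuity of the scale functions in $q$ (Lemma 8.3 of \cite{K}), since $r^{(0)}\equiv 1$; hence $\p_x(T_a^+<\infty)=1$. I expect the only genuine obstacle to be the integral identity above, but this is purely mechanical once one recognizes it as the $p=q$ specialization of \eqref{RLqp}; the only care required is the Fubini interchange and the bookkeeping of scale-function arguments.
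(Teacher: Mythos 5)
Your proposal is correct and follows essentially the same route as the paper: specialize Theorem \ref{resol} to $B=[0,a]$, establish $\int_0^a w^{(q)}(x,z)\,\ud z=(r^{(q)}(x)-1)/q$ via the $p=q$ case of \eqref{RLqp}, and convert the resolvent into the Laplace transform of $T_a^+$, with the $q\downarrow 0$ limit and $r^{(0)}\equiv 1$ giving a.s.\ finiteness. No gaps.
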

\begin{proof}
(i) By \eqref{RLqp}, we get
%And by integrating from $a$ to $b$ we get that
\begin{align*}
\delta\int_b^a\int_0^b\mathbb{W}^{(q)}(a-u)W^{(q)\prime}(u-z)\ud z \ud u &= -\delta\int_b^a\mathbb{W}^{(q)}(a-u)W^{(q)}(u-b)\ud u+\delta\int_b^a\mathbb{W}^{(q)}(a-u)W^{(q)}(u)\ud u \\
&= -\overline{\mathbb{W}}^{(q)}(a-b) + \overline{W}^{(q)}(a-b) +\delta\int_b^a\mathbb{W}^{(q)}(a-u)W^{(q)}(u)\ud u. 
\end{align*}
Hence,
\begin{align*}
\int_0^a w^{(q)}(a, z) \ud z 
%&:= W^{(q)}(x-z) 1_{\{ 0 < z < b \}} + \mathbb{W}^{(q)}(x-z) 1_{\{ b < z \}} +\delta 1_{\{0 < z < b\}} \int_{b}^x\mathbb{W}^{(q)}(x-u)W^{(q)\prime}(u-z)\ud u =
%%r(x,b)&:=Z^{(q)}(x)+q\delta\int_b^x\mathbb{W}^{(q)}(x-y)W^{(q)}(y)\ud y, \quad x, b \in \R.
%%\end{align*}
%%In order to prove the corollary we note the following
%%\begin{align}
%\phi_b(a;\R)&+\varphi_0(b,a;\R)+\delta\int_0^b \int_{b}^a\mathbb{W}^{(q)}(a-u)W^{(q)\prime}(u-z)\ud u \ud z\notag\\
&=\overline{W}^{(q)}(a) + \delta \int_b^a\mathbb{W}^{(q)}(a-u)W^{(q)}(u)\ud u = \frac {r^{(q)}(a)-1} q.
\end{align*}
Therefore, substituting the above expression in \eqref{resol1} with $B = [0,a]$, we get
\begin{multline*}
\e_x\bigg(\int_0^{T_a^+}e^{-qt}\ud t\bigg) = \bigg(\int_0^a w^{(q)}(a,z) \diff z \bigg) \frac{r^{(q)}(x)}{r^{(q)}(a)} - \int_0^x w^{(q)}(x,z)  \diff z \\
=\left( \frac {r^{(q)}(a)-1} q\right) \frac{r^{(q)}(x)}{r^{(q)}(a)} - \frac {r^{(q)}(x)-1} q  = q^{-1} \Big( 1- \frac {r^{(q)}(x)} {r^{(q)}(a)}\Big).
\end{multline*}
The result follows by noting that $\mathbb{E}_x\big(e^{-qT_a^+}\big)=1-q\mathbb{E}_x (\int_0^{T_a^+} e^{-qt} \ud t )$.  

(ii) The finiteness of $T_a^+$ holds by setting $q = 0$ and noting that $r^{(0)}(x)=r^{(0)}(a)=1$.
%Now using 
%Therefore putting the pieces together we get that
%\begin{align}
%1+q\Bigg(\phi_b(a;\R)+\varphi_0(b,a;\R)+&\delta\int_0^b \int_{b}^c\mathbb{W}^{(q)}(a-u)W^{(q)\prime}(u-z)\ud u \ud z\Bigg)=r(a, b). \notag %Z^{(q)}(c)+q\delta\int_b^c\mathbb{W}^{(q)}(c-u)W^{(q)}(u)\ud u.\notag
%\end{align}

%This implies that
%\begin{align}
%%&\frac{Z^{(q)}(x)+q\delta\int_b^x\mathbb{W}^{(q)}(x-y)W^{(q)}(y)\ud y}{Z^{(q)}(c)+q\delta\int_b^c\mathbb{W}^{(q)}(c-y)W^{(q)}(y)\ud y}\notag\\
%\frac{r(x,b)}{r(a,b)} &=\frac{1+q\Big(\phi_b(x;\R)+\varphi_0(b,x;\R)+\delta\int_0^b \int_{b}^x\mathbb{W}^{(q)}(x-u)W^{(q)\prime}(u-z)\ud u \ud z\Big)}{1+q\Big(\phi_b(a;\R)+\varphi_0(b,a;\R)+\delta\int_0^b \int_{b}^a\mathbb{W}^{(q)}(a-u)W^{(q)\prime}(u-z) \ud u \ud z\Big)}.\notag
%\end{align}
%In other words
%\begin{align}
%%&\frac{Z^{(q)}(x)+q\delta\int_b^x\mathbb{W}^{(q)}(x-y)W^{(q)}(y)\ud y}{Z^{(q)}(c)+q\delta\int_b^c\mathbb{W}^{(q)}(c-y)W^{(q)}(y)\ud y}\notag\\
%\frac{r(x,b)}{r(a,b)} &=1-q\Big(\phi_b(a;\R)+\varphi_0(b,a;\R)+\delta\int_{[0,b]}\int_{b}^a\mathbb{W}^{(q)}(a-u)W^{(q)\prime}(u-z) \ud u \ud z\Big)
%%&\times\left\{\frac{Z^{(q)}(x)+q\delta\int_b^x\mathbb{W}^{(q)}(x-y)W^{(q)}(y)\ud y}{Z^{(q)}(c)+q\delta\int_b^c\mathbb{W}^{(q)}(c-y)W^{(q)}(y)\ud y}\right\}\notag\\
%\frac{r(x,b)}{r(a,b)} \notag \\
%&-q\Big(\phi_b(x;\R)+\varphi_0(b,x;\R)+\delta\int_{[0,b]}\int_{b}^x\mathbb{W}^{(q)}(x-u)W^{(q)\prime}(u-z) \ud u \ud z\Big)\notag\\
%&=1-qV^{(q)}(x,a;\R).\notag
%\end{align}

\end{proof}
\begin{remark}
In particular, when $\delta = 0$ in Corollary \ref{corollary_one_sided}, we recover \eqref{upcrossing_time_reflected}. %\red{ok.}
\end{remark}

%\subsection{Dividends and application to risk theory.}\label{dividendsn}

We conclude this section with the expression of the expected NPV of $L$ as discussed in Section \ref{subsection_dividends}; these are immediate by Theorem \ref{resol} and Corollary \ref{corollaryresol1}.
\begin{corollary}
For any $q\geq0$ and $x\leq a$, we have
\begin{align*}%\label{dividends}
\mathbb{E}_x\left(\int_0^{T_a^+}e^{-qt}\ud L_t\right) %&= \delta V^{(q)}(x,c;[b,\infty)) \\
&= \delta \overline{\mathbb{W}}^{(q)}(a-b) \frac{r^{(q)}(x)}{r^{(q)}(a)}- \delta \overline{\mathbb{W}}^{(q)}(x-b). %\left(\frac{Z^{(q)}(x)+q\delta\int_b^x\mathbb{W}^{(q)}(x-y)W^{(q)}(y)\ud y}{Z^{(q)}(c)+q\delta\int_b^c\mathbb{W}^{(q)}(c-y)W^{(q)}(y)\ud y}\right).
\end{align*}
%\red{[It is probably that?]
%\begin{align}
%&V^{(q)}(x,c;[b,\infty))=-\overline{\mathbb{W}}^{(q)}(x-b)dz\notag\\
%&+\overline{\mathbb{W}}^{(q)}(c-b)\left(\frac{Z^{(q)}(x-a)+q\delta\int_b^x\mathbb{W}^{(q)}(x-y)W^{(q)}(y-a)dy}{Z^{(q)}(c-a)+q\delta\int_b^c\mathbb{W}^{(q)}(c-y)W^{(q)}(y-a)dy}\right).\notag
%\end{align}
%}
\end{corollary}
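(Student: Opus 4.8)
The plan is to recognize that this corollary is nothing more than the resolvent of Theorem \ref{resol} read off at the single Borel set on which $L$ accumulates. Since $L_t = \delta \int_0^t 1_{\{ V_s > b\}} \ud s$, the discounted expected cost pulls out the constant $\delta$ and becomes an occupation-type quantity:
\begin{align*}
\mathbb{E}_x\left(\int_0^{T_a^+}e^{-qt}\ud L_t\right) = \delta\, \mathbb{E}_x\left(\int_0^{T_a^+}e^{-qt}1_{\{ V_t > b\}}\ud t\right).
\end{align*}
Because $V_t \in [0,a]$ for every $t < T_a^+$, the indicator $1_{\{ V_t > b\}}$ coincides with $1_{\{ V_t \in B\}}$ for the Borel set $B := (b,a]$, so the right-hand side is exactly $\delta$ times the left-hand side of \eqref{resol1} with this choice of $B$.

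Next I would apply Theorem \ref{resol} directly, obtaining
\begin{align*}
\mathbb{E}_x\left(\int_0^{T_a^+}e^{-qt}\ud L_t\right) = \delta \int_b^a \Big( w^{(q)}(a,z)\frac{r^{(q)}(x)}{r^{(q)}(a)} - w^{(q)}(x,z)\Big)\ud z.
\end{align*}
The only remaining task is to integrate $w^{(q)}$ over $z \in (b,a]$. On this range the term $1_{\{ 0 < z < b\}}$ in the definition of $w^{(q)}$ vanishes identically, so $w^{(q)}(x,z) = 1_{\{ b < z < x\}}\mathbb{W}^{(q)}(x-z)$. Hence, by the substitution $u = a-z$ together with the definition of $\overline{\mathbb{W}}^{(q)}$,
\begin{align*}
\int_b^a w^{(q)}(a,z)\ud z = \int_b^a \mathbb{W}^{(q)}(a-z)\ud z = \overline{\mathbb{W}}^{(q)}(a-b),
\end{align*}
and, using $x \leq a$ so that the integrand is supported on $(b,x)$,
\begin{align*}
\int_b^a w^{(q)}(x,z)\ud z = \int_b^x \mathbb{W}^{(q)}(x-z)\ud z = \overline{\mathbb{W}}^{(q)}(x-b).
\end{align*}
Substituting these two evaluations gives the stated expression $\delta \overline{\mathbb{W}}^{(q)}(a-b)\,r^{(q)}(x)/r^{(q)}(a) - \delta\overline{\mathbb{W}}^{(q)}(x-b)$.

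There is essentially no obstacle here: the identity is a direct specialization of Theorem \ref{resol}. The only points worth a moment's care are that the value of the integrand at the single point $z=b$ is Lebesgue-null, so whether one takes $B=(b,a]$ or $[b,a]$ is immaterial, and that the $1_{\{ 0<z<b\}}$ piece of $w^{(q)}$ — the one carrying the $\delta$-weighted $W^{(q)\prime}$ integral — drops out because the integration variable never enters $(0,b)$. No separate argument is needed for $q=0$, since Theorem \ref{resol} already covers all $q \geq 0$.
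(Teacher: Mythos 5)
Your proof is correct and is exactly the argument the paper intends: the authors state this corollary is "immediate by Theorem \ref{resol}," and your specialization to $B=(b,a]$, together with the integration of $w^{(q)}(\cdot,z)$ over $z\in(b,a]$ yielding $\overline{\mathbb{W}}^{(q)}(a-b)$ and $\overline{\mathbb{W}}^{(q)}(x-b)$, is precisely that computation.
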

%By taking $a\to\infty$ we have the following result related to the expected total amount of dividends paid.%\red{[we should remove $h$]} \red{[I will check the following result by setting $c=b$ and compare the result for the reflected \lev process.]}
\begin{corollary} \label{cor_dividend_infty}
 Fix $x \in \R$.  
For $q > 0$, we have
\begin{align}
\mathbb{E}_x&\left(\int_0^{\infty}e^{-qt}\ud L_t\right)={e^{-\varphi(q)b}} \frac { r^{(q)}(x)} {\displaystyle \varphi(q)q\int_b^{\infty}e^{-\varphi(q)y}W^{(q)}(y)\ud y} - \delta \overline{\mathbb{W}}^{(q)}(x-b).\notag
\end{align}
For $q = 0$, it becomes infinity.
\end{corollary}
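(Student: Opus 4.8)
The plan is to obtain both formulae by letting $a \uparrow \infty$ in the preceding corollary, which states $\mathbb{E}_x(\int_0^{T_a^+}e^{-qt}\ud L_t) = \delta \overline{\mathbb{W}}^{(q)}(a-b) r^{(q)}(x)/r^{(q)}(a)- \delta \overline{\mathbb{W}}^{(q)}(x-b)$. First I would justify that the left-hand side converges to the desired quantity: since the integrand $e^{-qt}\ud L_t$ is nonnegative and $T_a^+$ is nondecreasing in $a$ with $T_a^+\uparrow\infty$ a.s.\ as $a\uparrow\infty$ (the finiteness of each $T_a^+$ is Corollary \ref{corollary_one_sided}, while the divergence follows because the c\`adl\`ag path of $V$ is bounded on compact time-intervals, so $V$ cannot exceed arbitrarily high levels before a finite time), monotone convergence gives $\mathbb{E}_x(\int_0^{\infty}e^{-qt}\ud L_t)=\lim_{a\uparrow\infty}\mathbb{E}_x(\int_0^{T_a^+}e^{-qt}\ud L_t)$.

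For $q>0$, the term $\delta\overline{\mathbb{W}}^{(q)}(x-b)$ does not depend on $a$ and hence survives, so it suffices to evaluate $\delta r^{(q)}(x)\lim_{a\uparrow\infty}\overline{\mathbb{W}}^{(q)}(a-b)/r^{(q)}(a)$. I would compute this limit by dividing numerator and denominator by $\mathbb{W}^{(q)}(a)$ and reusing the asymptotics already established in the proof of Corollary \ref{corollaryresol1}: on the one hand $r^{(q)}(a)/\mathbb{W}^{(q)}(a)\to q\delta\int_b^\infty e^{-\varphi(q)y}W^{(q)}(y)\ud y$; on the other hand, after the substitution $u=a-s$ in $\overline{\mathbb{W}}^{(q)}(a-b)=\int_b^a\mathbb{W}^{(q)}(a-s)\ud s$, the monotonicity in \eqref{W_q_limit} yields the bound $\mathbb{W}^{(q)}(a-s)/\mathbb{W}^{(q)}(a)\le e^{-\varphi(q)s}$, so dominated convergence gives $\overline{\mathbb{W}}^{(q)}(a-b)/\mathbb{W}^{(q)}(a)\to\int_b^\infty e^{-\varphi(q)s}\ud s = e^{-\varphi(q)b}/\varphi(q)$. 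Taking the ratio, the two factors of $\delta$ cancel and I recover $e^{-\varphi(q)b}r^{(q)}(x)/(\varphi(q)q\int_b^\infty e^{-\varphi(q)y}W^{(q)}(y)\ud y)$, which is the claimed expression.

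For $q=0$ the computation is immediate: since $r^{(0)}\equiv 1$, the preceding corollary reduces to $\delta\overline{\mathbb{W}}(a-b)-\delta\overline{\mathbb{W}}(x-b)$, and because $\mathbb{W}$ is strictly positive and increasing on $(0,\infty)$ the integral $\overline{\mathbb{W}}(a-b)=\int_0^{a-b}\mathbb{W}(u)\ud u$ diverges as $a\uparrow\infty$, so the expected NPV is infinite. The only genuinely delicate point is the $q>0$ asymptotic analysis, and even there the work is light because the limits of $r^{(q)}(a)/\mathbb{W}^{(q)}(a)$ and of the scale-function ratios are already available from Corollary \ref{corollaryresol1}; the main thing to get right is the exponential domination $\mathbb{W}^{(q)}(a-s)/\mathbb{W}^{(q)}(a)\le e^{-\varphi(q)s}$ that legitimizes passing the limit inside the integral defining $\overline{\mathbb{W}}^{(q)}(a-b)$.
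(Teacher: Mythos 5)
Your proof is correct and takes essentially the same route as the paper: the paper declares the result ``immediate'' from Theorem \ref{resol} and Corollary \ref{corollaryresol1} (i.e.\ setting $B=(b,\infty)$ there and using $\ud L_t=\delta 1_{\{V_t>b\}}\ud t$), and your direct $a\uparrow\infty$ limit of the finite-horizon dividend identity rests on exactly the same ingredients, namely monotone convergence via $T_a^+\uparrow\infty$ and the asymptotics $\mathbb{W}^{(q)}(a-y)/\mathbb{W}^{(q)}(a)\to e^{-\varphi(q)y}$ with the exponential domination supplied by \eqref{W_q_limit}. Both the $q>0$ computation and the $q=0$ divergence are handled correctly.
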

%\begin{proof}
%It follows verbatim from the proof of Corollary \ref{corollaryresol1}.
%\end{proof}
\section{Costs of capital injection} \label{section_capital_injection}
In this section, we derive the expression for the second item of \eqref{net_present_value}, which corresponds to the NPV of capital injection in the insurance context.
%Now let us derive the total cost of capital injection. 
%Let $R=\{R_t:t\geq0\}$  be the total cost of capital injection until time $t>0$, and define for $x\leq a$
%\[
%g(x,a):=\mathbb{E}_x\Big(\int_{\red{[0,T_a^+]}}e^{-qt}\ud R_t\Big), \quad x \in \R.
%\]

%\red{JL: Unlike other results, I guess the approximation result is not sufficient to prove for the case of unbounded variation case, because I do not know if the dominated convergence theorem can be applied to the left hand side of \eqref{capital cost}. I guess we need to approximate piece by piece in the theorem?  Please see the ones in red in the proof.
%}
\begin{proposition} \label{prop_capital_injection}Suppose $\psi'(0+) > -\infty$ and $q > 0$.
For any $x\leq a$, we have 
\begin{align}\label{capital cost}
\begin{split}
\mathbb{E}_x&\left(\int_{[0,T_a^+]}e^{-qt}\ud R_t\right)=\tilde{r}^{(q)}(a) \frac{r^{(q)}(x)} {r^{(q)}(a)}- \tilde{r}^{(q)}(x),
\end{split}
\end{align}
where
\begin{align*}
\tilde{r}^{(q)}(x)
%&:= \overline{Z}^{(q)}(x)+\delta\int_b^x\mathbb{W}^{(q)}(x-y)Z^{(q)}(y)\ud y- \psi'(0+) \overline{\mathbb{W}}^{(q)}(x-b)+\frac {\psi'(0+)} q \mathbb{Z}^{(q)} (x-b) \\
&:= \overline{Z}^{(q)}(x) + \frac {\psi'(0+)} q +\delta\int_b^x\mathbb{W}^{(q)}(x-y)Z^{(q)}(y)\ud y, \quad x \in \R.
\end{align*} 
\end{proposition}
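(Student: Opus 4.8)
The plan is to mirror exactly the strategy used to prove Theorem \ref{resol}, but with the resolvent density replaced by the capital-injection identities \eqref{capital_injection_identity_SN} for the reflected process $U$ and \eqref{undershoot_expectation}/\eqref{cc0} for the drift-changed process $Y$. As before, I would first establish the result for $X$ of bounded variation and then pass to the unbounded-variation case by the approximation scheme of Proposition \ref{prop_approximation} and Remark \ref{remark_strongly_approximating}. Throughout I write $g^{(q)}(x,a) := \mathbb{E}_x\big(\int_{[0,T_a^+]}e^{-qt}\ud R_t\big)$ for the quantity of interest.

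First I would split on whether $x$ is below or above $b$, using Remark \ref{remark_connection_Y_U} to identify $V$ with $U$ up to $T_b^+ = \kappa_b^+$ and with $Y$ up to $T_b^- = \tau_b^-$. For $x < b$, the cumulative capital injected $R$ coincides with $\tilde{R}$ (the pushing term of the reflected process $U$) until $\kappa_b^+$, and no injection occurs while $V$ is in the refraction region; hence by the strong Markov property and \eqref{capital_injection_identity_SN}, \eqref{upcrossing_time_reflected},
\begin{align*}
g^{(q)}(x,a) = \mathbb{E}_x\Big(\int_{[0,\kappa_b^+]}e^{-qt}\ud\tilde{R}_t\Big) + \frac{Z^{(q)}(x)}{Z^{(q)}(b)}\, g^{(q)}(b,a).
\end{align*}
For $x \geq b$, since $R$ increases only at or below $0$, the process accumulates no injection before $\tau_b^-$; thus by the strong Markov property at $\tau_b^-$ (and noting $V_{\tau_b^-} = Y_{\tau_b^-}$ on $\{\tau_b^- < \tau_a^+\}$),
\begin{align*}
g^{(q)}(x,a) = \mathbb{E}_x\big(e^{-q\tau_b^-}\, g^{(q)}(Y_{\tau_b^-},a)\, 1_{\{\tau_b^- < \tau_a^+\}}\big).
\end{align*}
Setting $x = b$ in this last relation, substituting the expression for $x < b$ into the right-hand side via the undershoot formula \eqref{undershoot_expectation}, and dividing by $\mathbb{W}^{(q)}(0)/\mathbb{W}^{(q)}(a-b)$ yields a single scalar equation for $g^{(q)}(b,a)$. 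The integrals appearing there are handled by the new identities of Lemma \ref{lemma_useful_identity}: the term carrying $Z^{(q)}$ collapses via \eqref{lemma_useful_identity_c} and \eqref{cc0}, while the $\tilde{R}$-contribution, which involves $\overline{Z}^{(q)}$, is exactly what \eqref{lemma_useful_identity_Z_bar} is designed to simplify; the stray $\psi'(0+)$ and $\overline{\mathbb{W}}^{(q)}$ terms are what force the precise form of $\tilde{r}^{(q)}$. Solving for $g^{(q)}(b,a)$ and back-substituting gives the claim for $x < b$; for $x > b$ one substitutes the already-solved value of $g^{(q)}(Y_{\tau_b^-},a)$ and checks, as in Theorem \ref{resol}, that the coefficient of $\mathbb{W}^{(q)}(x-b)/\mathbb{W}^{(q)}(a-b)$ cancels, leaving $\tilde{r}^{(q)}(a)\,r^{(q)}(x)/r^{(q)}(a) - \tilde{r}^{(q)}(x)$.

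I expect the main obstacle to be the bookkeeping in the $x > b$ case, where the candidate $\tilde{r}^{(q)}$ must be shown to absorb all the $\overline{Z}^{(q)}$- and $\psi'(0+)$-terms so that the spurious $\mathbb{W}^{(q)}(x-b)$-proportional piece vanishes identically; this is precisely where the hypothesis $\psi'(0+) > -\infty$ is needed for $\tilde{r}^{(q)}$ to be well defined, and where the third identity \eqref{lemma_useful_identity_Z_bar} carries the full weight of the computation. The passage to unbounded variation is then routine: both sides are continuous functionals of the scale functions $W^{(q)}, \mathbb{W}^{(q)}$ and their derivatives, which converge along the approximating sequence by Remark \ref{remark_strongly_approximating}, and the dominated-convergence justification parallels that given at the end of the proof of Theorem \ref{resol}, using $T_{a,n}^+ \to T_a^+$ together with the finiteness established in Corollary \ref{corollary_one_sided}.
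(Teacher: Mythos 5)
Your treatment of the bounded-variation case is essentially the paper's proof: the same split at $b$, the same use of \eqref{upcrossing_time_reflected} and \eqref{capital_injection_identity_SN} below $b$, the same one-step equation $g^{(q)}(x,a)=\mathbb{E}_x\big(e^{-q\tau_b^-}g^{(q)}(Y_{\tau_b^-},a)1_{\{\tau_b^-<\tau_a^+\}}\big)$ above $b$, evaluated via \eqref{undershoot_expectation}, \eqref{cc0} and \eqref{lemma_useful_identity_Z_bar}, followed by solving the scalar equation at $x=b$ and back-substituting. That part is fine.

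The genuine gap is in the extension to unbounded variation, which you dismiss as ``routine'' and claim ``parallels'' the end of the proof of Theorem \ref{resol}. It does not. In Theorem \ref{resol} the functional is $\int_0^{T_{a,n}^+}e^{-qt}1_{\{V_t^{(n)}\in B\}}\,\ud t$, whose integrand is bounded by $e^{-qt}$, so pathwise convergence of $V^{(n)}$ and $T_{a,n}^+$ plus ordinary dominated convergence suffices. Here the functional is a Stieltjes integral $\int_{[0,T_{a,n}^+]}e^{-qt}\,\ud R_t^{(n)}$ against the \emph{unbounded} increasing processes $R^{(n)}$, and no dominating integrable bound is available for free: $R_t^{(n)}$ is only controlled by $a-\inf_{s\le t}Y_s^{(n)}$, which is not uniformly integrable over $n$ without further work. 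The paper has to (a) truncate at the first passage of $Y^{(n)}$ below $-M$ and show the contribution after $\tau_{-M,n}^-$ is uniformly small in $n$ via the bound $\mathbb{E}_x\big(e^{-q\tau_{-M,n}^-}1_{\{\tau_{-M,n}^-<T_{a,n}^+\}}\big)\|g\|$; (b) dominate the truncated part by an integrable random variable built from the Poisson random measure of the big jumps, after coupling $N^{(n)}=N$ on $u<-1$ --- and it is exactly here, not merely in the definition of $\tilde r^{(q)}$, that the hypothesis $\psi'(0+)>-\infty$ (i.e.\ $\int_{(-\infty,-1)}|u|\Pi(\ud u)<\infty$) is consumed; and (c) prove the a.s.\ convergence of the Stieltjes integrals themselves, which requires integration by parts and the nontrivial observation that $R$ is flat on a neighbourhood of $T_a^+$ (since $V_{T_a^+}=a>0$), so that $R_{T_{a,n}^+}\to R_{T_a^+}$. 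None of (a)--(c) is supplied or implied by your argument, and without them the interchange of limit and expectation in the unbounded-variation case is unjustified.
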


\begin{proof} (i) Assume that $X$ is of bounded variation, and let $g^{(q)}(x,a)$ be the left hand side of \eqref{capital cost}. For $x<b$, by an application of the strong Markov property, Remark \ref{remark_connection_Y_U}, \eqref{upcrossing_time_reflected}, and \eqref{capital_injection_identity_SN}, \begin{align}\label{12}
g^{(q)}(x,a)&=\mathbb{E}_x\Big(\int_{[0,T_b^+]} e^{-qt}\ud R_t\Big)+\mathbb{E}_x(e^{-qT_b^+})g^{(q)}(b,a)\notag\\
&=-\Big( \overline{Z}^{(q)}(x)+\frac{\psi'(0+)}{q} \Big)+\frac{Z^{(q)}(x)}{Z^{(q)}(b)}\left(\overline{Z}^{(q)}(b)+\frac{\psi'(0+)}{q}+g^{(q)}(b,a)\right).
\end{align} %\red{JL: the above holds for unbounded var case too.}
%Here we can extend it to the case $x < 0$ and indeed it holds that $g^{(q)}(x,a) = g^{(q)}(0,a) -a$.
\par Now in the case where $x\geq b$, we obtain using (\ref{12}),  Remark \ref{remark_connection_Y_U}, and the fact that $R$ stays constant on $[0, T_b^-)$,

\begin{align} \label{g_x_a_probabilistic}
\begin{split}
g^{(q)}(x,a)%&= \mathbb{E}_x\Big(\int_{[0,\tau_b^-\wedge\tau_a^+)}e^{-qt}\ud R_t\Big)+\mathbb{E}_x(e^{-q\tau_b^-}g^{(q)}(Y_{\tau_b^-},a)1_{\{\tau_b^-<\tau_a^+ \}})\\
&=\mathbb{E}_x \big(e^{-q\tau_b^-}g^{(q)}(Y_{\tau_b^-},a) 1_{\{\tau_b^-<\tau_a^+\}} \big)\\
&=\mathbb{E}_x\left(e^{-q\tau_b^-}\left\{-\overline{Z}^{(q)}(Y_{\tau_b^-})-\frac{\psi'(0+)}{q}+\frac{Z^{(q)}(Y_{\tau_b^-})}{Z^{(q)}(b)}\left(\overline{Z}^{(q)}(b)+\frac{\psi'(0+)}{q}+g^{(q)}(b,a)\right)\right\} 1_{\{\tau_b^-<\tau_a^+\}}\right).
\end{split}
%&=\mathbb{E}_x\left(e^{-q\tau_b^-}\left\{-l(Y_{\tau_b^-}-a)+Z^{(q)}(Y_{\tau_b^-}-a)\frac{l(b-a)}{Z^{(q)}(b-a)}+\frac{Z^{(q)}(Y_{\tau_b^-}-a)}{Z^{(q)}(b-a)}g(b)\right\};\tau_b^-<\tau_c^+\right).\notag
\end{align}
Here, by using \eqref{undershoot_expectation} and \eqref{lemma_useful_identity_Z_bar}, we obtain 
\begin{align}\label{cc1}
\begin{split}
\mathbb{E}_x&\left(e^{-q\tau_b^-}\overline{Z}^{(q)}(Y_{\tau_b^-}) 1_{\{\tau_b^-<\tau_a^+ \}}\right)%=\mathbb{E}_{x-b}\left(e^{-q\tau_0^-}\overline{Z}^{(q)}(Y_{\tau_0^-}+b) 1_{\{\tau_0^-<\tau_{a-b}^+ \}}\right)
\\
&=\int_0^{a-b}\int_{(-\infty,-y)}\overline{Z}^{(q)}(y+u+b)\left\{\frac{\mathbb{W}^{(q)}(x-b)\mathbb{W}^{(q)}(a-b-y)}{\mathbb{W}^{(q)}(a-b)}-\mathbb{W}^{(q)}(x-b-y)\right\}\Pi(\ud u)\ud y \\
%&=\overline{Z}^{(q)}(x)+\delta\int_b^x\mathbb{W}^{(q)}(x-y)Z^{(q)}(y)\ud y-\overline{Z}^{(q)}(x-b) \\
%&+\overline{\mathbb{Z}}^{(q)}(x-b)-\psi'_Y(0+)\overline{\mathbb{W}}^{(q)}(x-b)-\delta\int_b^x\mathbb{W}^{(q)}(x-y)Z^{(q)}(y-b)\ud y \\
%&-\frac{\mathbb{W}^{(q)}(x-b)}{\mathbb{W}^{(q)}(a-b)}\Bigg(\overline{Z}^{(q)}(a)+\delta\int_b^a\mathbb{W}^{(q)}(a-y)Z^{(q)}(y)\ud y-\overline{Z}^{(q)}(a-b) \\
%&+\overline{\mathbb{Z}}^{(q)}(a-b)-\psi'_Y(0+)\overline{\mathbb{W}}^{(q)}(a-b)-\delta\int_b^a\mathbb{W}^{(q)}(a-y)Z^{(q)}(y-b)\ud y\Bigg) \\
&=\overline{Z}^{(q)}(x)+\delta\int_b^x\mathbb{W}^{(q)}(x-y)Z^{(q)}(y)\ud y-\psi'(0+)\overline{\mathbb{W}}^{(q)}(x-b)\\
&-\frac{\mathbb{W}^{(q)}(x-b)}{\mathbb{W}^{(q)}(a-b)}\Big(\overline{Z}^{(q)}(a)+\delta\int_b^a\mathbb{W}^{(q)}(a-y)Z^{(q)}(y)\ud y- \psi'(0+)\overline{\mathbb{W}}^{(q)}(a-b)\Big).
\end{split}
\end{align}
Substituting  \eqref{laplace_in_terms_of_z}, \eqref{cc0}, and \eqref{cc1} in \eqref{g_x_a_probabilistic}, we get, for $x\geq b$,	
\begin{align} \label{g_x_a_above_b}
g^{(q)}(x,a)
%=\mathbb{E}_x\left(\int_{[0,\tau_b^-\wedge\tau_c^+)}e^{-qt}\ud R_t\right)+\mathbb{E}_x(e^{-q\tau_b^-}g(Y_{\tau_b^-},c);\tau_b^-<\tau_c^+)\notag\\
%&=\mathbb{E}_x(e^{-q\tau_b^-}g(Y_{\tau_b^-},c);\tau_b^-<\tau_c^+)\notag\\
%&=\mathbb{E}_x\left(e^{-q\tau_b^-}\left\{-\overline{Z}^{(q)}(Y_{\tau_b^-})-\frac{\psi'(0+)}{q}+\frac{Z^{(q)}(Y_{\tau_b^-})}{Z^{(q)}(b)}\left(\overline{Z}^{(q)}(b)+\frac{\psi'(0+)}{q}+g(b,a)\right)\right\} 1_{\{\tau_b^-<\tau_a^+\}}\right)\notag\\
&=-\tilde{r}^{(q)}(x)+\frac{\mathbb{W}^{(q)}(x-b)}{\mathbb{W}^{(q)}(a-b)}\tilde{r}^{(q)}(a) %-\frac{\psi'(0+)}{q}\left(\mathbb{Z}^{(q)}(x-b) -\mathbb{Z}^{(q)}(a-b)\frac{\mathbb{W}^{(q)}(x-b)}{\mathbb{W}^{(q)}(a-b)}\right)\notag\\
+\frac {\tilde{r}^{(q)}(b)+g^{(q)}(b,a)} {Z^{(q)}(b)} \left(r^{(q)}(x)-\frac{\mathbb{W}^{(q)}(x-b)}{\mathbb{W}^{(q)}(a-b)} r^{(q)}(a)\right).
\end{align}

%with
%\begin{align}
%\overline{r}(x,b)&=\overline{Z}^{(q)}(x)+\delta\int_b^x\mathbb{W}^{(q)}(x-y)Z^{(q)}(y)\ud y-(\delta+\psi'_Y(0+))\overline{\mathbb{W}}^{(q)}(x-b). \notag
%\end{align}
%and
%\begin{align}
%r(x,b)=Z^{(q)}(x)+q\delta\int_b^x\mathbb{W}^{(q)}(x-y)W^{(q)}(y)\ud y.\notag
%\end{align}
Setting $x=b$, we obtain
\begin{align*}
g^{(q)}(b,a)
%&=-\tilde{r}^{(q)}(b)+ \frac {\tilde{r}^{(q)}(a)} {(c-\delta) \mathbb{W}^{(q)}(a-b)} +\frac{r^{(q)}(b)+g(b,a)} {Z^{(q)}(b)} \left(r^{(q)}(b)-\frac {r^{(q)}(a)} {(c-\delta) \mathbb{W}^{(q)}(a-b)}\right) \notag \\
&=-\tilde{r}^{(q)}(b)+\frac {\tilde{r}^{(q)}(a)} { (c-\delta) \mathbb{W}^{(q)}(a-b)} +\frac {\tilde{r}^{(q)}(b)+g^{(q)}(b,a) } {Z^{(q)}(b)} \left(Z^{(q)}(b)-\frac {r^{(q)}(a)} {(c-\delta)\mathbb{W}^{(q)}(a-b)} \right).
%&=\frac{\mathbb{W}^{(q)}(0)}{\mathbb{W}^{(q)}(a-b)}\tilde{r}^{(q)}(a) +\frac{1}{Z^{(q)}(b)}g(b,a)\left(Z^{(q)}(b)-\frac{\mathbb{W}^{(q)}(0)}{\mathbb{W}^{(q)}(a-b)}\blue{r^{(q)}(a)}\right),\notag \\
%&-\frac{1}{Z^{(q)}(b)}\tilde{r}^{(q)}(b)\left(\frac{\mathbb{W}^{(q)}(0)}{\mathbb{W}^{(q)}(a-b)}\blue{r^{(q)}(a)}\right),\notag
\end{align*}
Hence we have %\red{[Actually we need to have $W^{(q)}(0) > 0$ to solve this.]}
\begin{align}\label{above expression}
g^{(q)}(b,a)=\frac{\tilde{r}^{(q)}(a)}{r^{(q)}(a)}Z^{(q)}(b)-\tilde{r}^{(q)}(b).
\end{align}
By substituting (\ref{above expression}) in \eqref{12}, we obtain \eqref{capital cost} for $x< b$; on the other hand, by using (\ref{above expression}) in \eqref{g_x_a_above_b}, we get \eqref{capital cost} for $x \geq b$.

(ii) We now extend this result to the case $X$ is of unbounded variation. 
Let $V^{(n)}$, $Y^{(n)}$, $R^{(n)}$, $L^{(n)}$, and $T_{a,n}^+$ are those for $X^{(n)}$ (of bounded variation) of a strongly approximating sequence for $X$. Without loss of generality, we can choose such sequence so that the Poisson random measure $N^{(n)}(\diff s, \diff u)$ of $X^{(n)}$, for all $n \in \mathbb{N}$, coincides with the Poisson random measure $N(\diff s, \diff u)$ of $X$ for $u < -1$ a.s.   Recall also, as in the proof of Theorem \ref{resol}, that $T_{a,n}^+ \xrightarrow{n \uparrow \infty} T_a^+$ a.s. 

Define, for $n \in \mathbb{N}$,
%\blue{Kazu: In order to have a uniform notation I made the change $g^{(n)}$ to $g^{(q)}_n$.}
\begin{align*} 
g_n^{(q)}(x,a) := \mathbb{E}_x\Big(\int_{[0,T_{a,n}^{+}]}e^{-qt}\ud R_t^{(n)}\Big).
\end{align*}
In view of the expression \eqref{capital cost} for the bounded variation case and by Remark \ref{remark_strongly_approximating}, we have
\begin{align*}
\|g \| := \sup_{n\in\mathbb{N}} \sup_{0 \leq y \leq a}g^{(q)}_n(y,a)  < \infty.
\end{align*}

We define $\tau_{-M,n}^{-} := \inf \{ t > 0: Y_t^{(n)} < - M\}$ and consider the decomposition, for $M > 0$, %\begin{align} \label{g_decomposition}
%g^{(q)}_n(x,a) &=\E_x \Big( 1_{\{T_{a,n}^+  < \tau_{-M,n}^- \}} \int_{[0, T_{a,n}^+ ]} e^{-qs} \diff R_s^{(n)} \Big)+\E_x \Big( 1_{\{T_{a,n}^+ > \tau_{-M,n}^-\}}  \int_{[0, T_{a,n}^+]} e^{-qs} \diff R_s^{(n)}\Big).
%\end{align}
%\red{JL: How about changing this to
\begin{align} \label{g_decomposition}
g^{(q)}_n(x,a) &=\E_x \Big( \int_{[0, T_{a,n}^+ \wedge \tau_{-M,n}^-]} e^{-qs} \diff R_s^{(n)} \Big)+\E_x \Big( 1_{\{T_{a,n}^+ > \tau_{-M,n}^-\}}  \int_{(\tau_{-M,n}^-, T_{a,n}^+]} e^{-qs} \diff R_s^{(n)}\Big).
\end{align}
%\blue{Kazu: Yeah it looks good.}
(1) We shall first show that the second expectation can be made arbitrarily small uniformly in $n \geq N$ by choosing $M, N$ sufficiently large.
We have, with $(\mathcal{F}_t^{(n)}; t \geq 0)$ being the natural filtration of $X^{(n)}$,
\begin{align} \label{bound_R_tail}
\begin{split}
\E_x \Big( 1_{\{T_{a,n}^+ > \tau_{-M,n}^-\}} \int_{(\tau_{-M,n}^-, T_{a,n}^+]} e^{-qs} \diff R_s^{(n)} \Big) &= \E_x \Big[ \E_x \Big( 1_{\{T_{a,n}^+ > \tau_{-M,n}^-\}} \int_{(\tau_{-M,n}^-, T_{a,n}^+]} e^{-qs} \diff R_s^{(n)} \Big| \mathcal{F}_{\tau_{-M,n}^-}^{(n)}\Big)\Big] \\
&\leq \E_x\big( e^{-q \tau_{-M,n}^-}1_{\{ \tau_{-M,n}^-<T_{a,n}^+\}} \big) \|g \|.
 \end{split}
\end{align}
%\blue{Kazu: I might be doing something wrong but I obtain:
%begin{align*} 
%\begin{split}
%\E_x &\Big( 1_{\{T_{a,n}^+ > \tau_{-M,n}^-\}} \int_{[0, T_{a,n}^+]} e^{-qs} \diff R_s^{(n)} \Big) = \E_x \Big[ \E_x \Big( 1_{\{T_{a,n}^+ > \tau_{-M,n}^-\}} \int_{[0, T_{a,n}^+]} e^{-qs} \diff R_s^{(n)} \Big| \mathcal{F}_{\tau_{-M,n}^-}\Big)\Big] \\
%&=\E_x \Big(1_{\{T_{a,n}^+ > \tau_{-M,n}^-\}} \int_{[0,  \tau_{-M,n}^-]}e^{-qs} \diff R_s^{(n)}\Big)+\E_x \Big[ \E_x \Big( 1_{\{T_{a,n}^+ > \tau_{-M,n}^-\}} \int_{[\tau_{-M,n}^-, T_{a,n}^+]} e^{-qs} \diff R_s^{(n)} \Big| \mathcal{F}_{\tau_{-M,n}^-}\Big)\Big]\\
%&\leq\E_x \Big(1_{\{T_{a,n}^+ > \tau_{-M,n}^-\}} \int_{[0,  \tau_{-M,n}^-]}e^{-qs} \diff R_s^{(n)}\Big)+\E_x[ e^{-q \tau_{-M,n}^-}1_{\{ \tau_{-M,n}^-<T_{a,n}^+\}}]\|g \|.
% \end{split}
%\end{align*}
%Hence \red{[we can remove this?]}
%\begin{align*}
%\Big|\E_x \Big( 1_{\{T_{a,n}^+ > \tau_{-M,n}^-\}}& \int_{(\tau_{-M,n}^-, T_{a,n}^+]} e^{-qs} \diff R_s^{(n)} \Big) -\E_x \Big( 1_{\{T_{a}^+ > \tau_{-M}^-\}} \int_{(\tau_{-M}^-, T_{a}^+]} e^{-qs} \diff R_s\Big) \Big|\\
%&\leq \left|\E_x \Big(1_{\{T_{a,n}^+ > \tau_{-M,n}^-\}} \int_{[0,  \tau_{-M,n}^-]}e^{-qs} \diff R_s^{(n)}\Big)-\E_x \Big(1_{\{T_{a}^+ > \tau_{-M}^-\}} \int_{[0,  \tau_{-M}^-]}e^{-qs} \diff R_s\Big)\right|\\
%&+\E_x[ e^{-q \tau_{-M,n}^-}1_{\{ \tau_{-M,n}^-<T_{a,n}^+\}}]\|g \|+\E_x[ e^{-q \tau_{-M}^-}1_{\{ \tau_{-M}^-<T_{a}^+\}}]\|g \|.
%\end{align*}
Bounded convergence gives 
\begin{align*}
\E_x \big( e^{-q \tau_{-M,n}^-}1_{\{ \tau_{-M,n}^-<T_{a,n}^+\}} \big) &\xrightarrow{n \uparrow \infty}\E_x \big( e^{-q \tau_{-M}^-}1_{\{ \tau_{-M}^-<T_a^+\}} \big), \\
\E_x \big( e^{-q \tau_{-M}^-}1_{\{ \tau_{-M}^-<T_a^+\}} \big) \leq \E_x \big( e^{-q \tau_{-M}^-}1_{\{ \tau_{-M}^-< \infty\}} \big) &\xrightarrow{M \uparrow \infty} 0.
\end{align*}
This means, for any arbitrary $\varepsilon > 0$, we can choose sufficiently large $N \in \mathbb{N}$ and $M > 0$ such that
\[
\sup_{n \geq N} \E_x \big( e^{-q \tau_{-M,n}^-}1_{\{ \tau_{-M,n}^-<T_{a,n}^+\}} \big)  < \varepsilon / \|g \|. \]
Namely, the left hand side of \eqref{bound_R_tail} can be made arbitrarily small by choosing sufficiently large $N$ and $M$.

(2) Consider now the first expectation in \eqref{g_decomposition}. On $[0, T_{a,n}^+]$, we have $V_t^{(n)} = X_t^{(n)} + R_t^{(n)} - L_t^{(n)} \leq a$ and hence 
\begin{align}
R_t^{(n)} \leq a + L_t^{(n)} - X_t^{(n)} \leq a - Y_t^{(n)} \leq a - \inf_{0 \leq s \leq t}Y_s^{(n)}. \label{R_bound}
\end{align}
On $[0,  T_{a,n}^+ \wedge \tau_{-M,n}^{-})$, $R_t^{(n)} \leq a + M$ and therefore $\int_{[0, T_{a,n}^+ \wedge \tau_{-M,n}^{-})} e^{-qs} \diff R_s^{(n)}$ is bounded.
Furthermore, by noting that $\Delta R_{T^+_{a,n}}^{(n)} = 0$ and by \eqref{R_bound},
\begin{multline*}
\int_{\{ T_{a,n}^+ \wedge \tau_{-M,n}^{-} \}} e^{-qs} \diff R_s^{(n)}  =\int_{\{\tau_{-M,n}^{-}\}} e^{-qs} \diff R_s^{(n)} 1_{\{\tau_{-M,n}^{-} < T_{a,n}^+\}} \\ \leq e^{-q \tau_{-M,n}^-} \big|R_{\tau_{-M,n}^-}^{(n)} \big| 1_{\{\tau_{-M,n}^{-} < T_{a,n}^+\}}
 \leq e^{-q \tau_{-M,n}^-} (\big|Y_{\tau_{-M,n}^-}^{(n)} \big| + a).
\end{multline*}
By $N(\diff s, \diff u) = N^{(n)}(\diff s, \diff u)$ for $u < -1$ and with $\underline{Y}$ the running infimum process of $Y$,  we have uniformly in $n \in \mathbb{N}$
\begin{align*}
e^{-q\tau_{-M,n}^-} \big|Y_{\tau_{-M,n}^-}^{(n)} \big|1_{\big\{Y_{\tau_{-M,n}^-}^{(n)} < -M-1\big\}} 
&= \int_0^\infty \int_{(-\infty,0)} e^{-qs} (-u-Y_{s-}^{(n)} ) 1_{ \{\underline{Y}_{s-}^{(n)} > -M, Y_{s-}^{(n)} + u < - M-1 \} } N^{(n)}(\diff s,  \diff u) \\
&\leq \int_0^\infty \int_{(-\infty, -1)} e^{-qs} (|u|+M) 1_{ \{\underline{Y}_{s-}^{(n)} > -M, Y_{s-}^{(n)} + u < - M-1 \} } N^{(n)}(\diff s, \diff u) \\ &= \int_0^\infty \int_{(-\infty,-1)} e^{-qs} (|u|+M) 1_{ \{\underline{Y}_{s-}^{(n)} > -M, Y_{s-}^{(n)} + u < - M-1 \} } N(\diff s, \diff u) \\
&\leq \int_0^\infty \int_{(-\infty,-1)} e^{-qs} (|u|+M)  N(\diff s,  \diff u), 
\end{align*}
which is integrable because, by the assumption that $\psi'(0+) > -\infty$,
\begin{align*}
\E_x \Big( \int_0^\infty \int_{(-\infty,-1)} e^{-qs} |u|  N(\diff s, \diff u) \Big) = \E_x \Big( \int_0^\infty \int_{(-\infty,-1)} e^{-qs} |u|  \Pi(\diff u)  \diff s \Big) = q^{-1} \int_{(-\infty,-1)}  |u|  \Pi(\diff u) < \infty.
\end{align*}
In sum, $\int_{[0, T_{a,n}^+ \wedge \tau_{-M,n}^{-}]} e^{-qs} \diff R_s^{(n)}$ is bounded in $n$ by an integrable random variable. Hence, Fatou's lemma gives
\begin{align*}
\limsup_{n \rightarrow \infty}\E_x \Big( \int_{[0, T_{a,n}^+ \wedge \tau_{-M,n}^{-}]} e^{-qs} \diff R_s^{(n)} \Big) \leq \E_x \Big( \limsup_{n \rightarrow \infty} \int_{[0, T_{a,n}^+ \wedge \tau_{-M,n}^{-}]} e^{-qs} \diff R_s^{(n)} \Big), \quad M > 0. \end{align*}

Combining with (1) and Fatou's lemma, we have that 
\begin{multline} \label{bound_fatou}
\mathbb{E}_x\Big( \liminf_{n \rightarrow \infty}\int_{[0,T_{a,n}^{+}]}e^{-qt}\ud R_t^{(n)}\Big)
\leq  \liminf_{n \rightarrow \infty} \mathbb{E}_x\Big( \int_{[0,T_{a,n}^{+}]}e^{-qt}\ud R_t^{(n)}\Big) \\ \leq \limsup_{n \rightarrow \infty} \mathbb{E}_x\Big( \int_{[0,T_{a,n}^{+}]}e^{-qt}\ud R_t^{(n)}\Big) 
\leq \mathbb{E}_x\Big( \limsup_{n \rightarrow \infty}\int_{[0,T_{a,n}^{+}]}e^{-qt}\ud R_t^{(n)}\Big).
\end{multline}
To see how the last inequality holds, by Fatou's lemma, for any $\varepsilon > 0$, we can choose sufficiently large $M$ such that
\begin{align*}
%&\mathbb{E}_x\Big( \liminf_{n \rightarrow \infty}\int_{[0,T_{a,n}^{+}]}e^{-qt}\ud R_t^{(n)}\Big)
%\leq  \liminf_{n \rightarrow \infty} \mathbb{E}_x\Big( \int_{[0,T_{a,n}^{+}]}e^{-qt}\ud R_t^{(n)}\Big) \leq 
&\limsup_{n \rightarrow \infty} \mathbb{E}_x\Big( \int_{[0,T_{a,n}^{+}]}e^{-qt}\ud R_t^{(n)}\Big) 
 % \leq \liminf_{n \rightarrow \infty} \mathbb{E}_x\Big( \int_{[0,T_{a,n}^{+} \wedge \tau_{-M,n}^{-} ]}e^{-qt}\ud R_t^{(n)}\Big) + \varepsilon \\
\leq \limsup_{n \rightarrow \infty} \mathbb{E}_x\Big( \int_{[0,T_{a,n}^{+} \wedge \tau_{-M,n}^{-} ]}e^{-qt}\ud R_t^{(n)}\Big) + \varepsilon  \\ &\leq \mathbb{E}_x\Big( \limsup_{n \rightarrow \infty}\int_{[0,T_{a,n}^{+} \wedge \tau_{-M,n}^{-} ]}e^{-qt}\ud R_t^{(n)}\Big) + \varepsilon \leq \mathbb{E}_x\Big( \limsup_{n \rightarrow \infty}\int_{[0,T_{a,n}^{+}]}e^{-qt}\ud R_t^{(n)}\Big) + \varepsilon.
\end{align*}

(3) In order to finish the proof, in view of \eqref{bound_fatou}, it remains to show that almost surely 
\begin{align*}
\lim_{n \rightarrow \infty }\int_{[0,T_{a,n}^{+}]}e^{-qt}\ud R_t^{(n)} 
= \int_{[0,T_{a}^{+}]}e^{-qt}\ud R_t. 
\end{align*}
Integration by parts gives 
\begin{align*}
\int_{[0,T_{a,n}^{+}]}e^{-qt}\ud R_t^{(n)} = e^{-q T_{a,n}^+} R_{T_{a,n}^+}^{(n)} + q \int_0^{T_{a,n}^{+}}e^{-qt}R_t^{(n)} \diff t.
\end{align*}
In view of Proposition \ref{prop_approximation}, we have $R_t^{(n)} \xrightarrow{n \uparrow \infty} R_t$, $t \geq 0$, and, as in the proof of Theorem \ref{resol}, $T_{a,n}^+ \xrightarrow{n \uparrow \infty} T_a^+$ a.s.\ (recall that $T_a^+ < \infty$ a.s.\ by Corollary \ref{corollary_one_sided}).

 In addition, the triangle inequality gives
\begin{align*}
\big|e^{-q T_{a,n}^+} R_{T_{a,n}^+}^{(n)} - e^{-q T_{a}^+} R_{T_{a}^+} \big| \leq \big|e^{-q T_{a,n}^+} R_{T_{a,n}^+}^{(n)} - e^{-q T_{a,n}^+} R_{T_{a,n}^+} \big| + \big|e^{-q T_{a,n}^+} R_{T_{a,n}^+} - e^{-q T_{a}^+} R_{T_{a}^+} \big|.
\end{align*}
The first term on the right hand side vanishes by the convergence of $R_t^{(n)}$ and $T_{a,n}^+$. To see how the latter also vanishes, set $\underline{\sigma} := \sup \{ t < T_a^+: V_t =0\}$ (with $\sup \varnothing = 0$) and $\overline{\sigma} := \inf \{ t > T_a^+: V_t =0\}$, then $R_t$ does not increase in the interval $(\underline{\sigma}, \overline{\sigma})$  (clearly $\underline{\sigma} < T_a^+ < \overline{\sigma}$) when $T^+_{a} > 0$; in the case when $T^+_{a} =0$ it does not increase in the interval $[0, \overline{\sigma})$  (clearly $0= T_a^+ < \overline{\sigma}$). This together with the convergence of $T^+_{a,n}$ to $T^+_a$ shows that it indeed vanishes.

\end{proof}

\begin{remark} 
In Proposition \ref{prop_capital_injection},  when $\delta = 0$ or $a=b$, we have $r^{(q)}(x)=Z^{(q)}(x)$ and $\tilde{r}^{(q)}(x)  =\overline{Z}^{(q)}(x) + {\psi'(0+)} / q$, and hence we recover \eqref{capital_injection_identity_SN}.
%\begin{align*}
%\mathbb{E}_x\left(\int_0^{T_c^+}e^{-qt}\ud R_t\right)&=- \Big( \overline{Z}^{(q)}(x) + \frac {\psi'(0+)} q\Big) +\Big( \overline{Z}^{(q)}(c) + \frac {\psi'(0+)} q \Big) \frac{Z^{(q)}(x)} {Z^{(q)}(c)},
%\end{align*}
%and hence we recover the result (page 167) in the proof of Theorem 1 of \cite{APP2007}, for the case of a reflected L\'evy process. %\red{JL: Changed this way.  Can we remove the rest?}

%In particular, if $\delta = 0$,
%\begin{align}
%\mathbb{E}_x\left(\int_0^{T_c^+}e^{-qt}\ud R_t\right)&=-\overline{r}(x,a,b)-\frac{\psi'(0+)}{q}Z^{(q)}(x-b)+\left(\bar{r}(c,a,b) + \frac {\psi'(0+)} q Z^{(q)} (c-b)\right)\frac{r(x,a,b)} {r(c,a,b)}
%\end{align}
%where
%\begin{align}
%\overline{r}(x,a,b)&=\overline{Z}^{(q)}(x-a)-\psi'(0+)\overline{W}^{(q)}(x-b), \notag\\
%r(x,a,b)&=Z^{(q)}(x-a).\notag
%\end{align}
%Hence,
%\begin{align}
%&\mathbb{E}_x\left(\int_0^{T_c^+}e^{-qt}\ud R_t\right)=-\overline{r}(x,a,b)-\frac{\psi'(0+)}{q}Z^{(q)}(x-b)+\left(\bar{r}(c,a,b) + \frac {\psi'(0+)} q Z^{(q)} (c-b)\right)\frac{r(x,a,b)} {r(c,a,b)}\notag\\
%&=-\overline{Z}^{(q)}(x-a)+\psi'(0+)\overline{W}^{(q)}(x-b)-\frac{\psi'(0+)}{q}Z^{(q)}(x-b)+\frac{Z^{(q)}(x-a)}{Z^{(q)}(b-a)}\left(\overline{Z}^{(q)}(c-a)+\frac{\psi'(0+)}{q}\right)\notag\\
%&=-\overline{Z}^{(q)}(x-a)-\frac{\psi'(0+)}{q}+\frac{Z^{(q)}(b\red{x?}-a)}{Z^{(q)}(b-a)}\left(\overline{Z}^{(q)}(c-a)+\frac{\psi'(0+)}{q}\right).\notag
%\end{align}
%And therefore we recover the result in Theorem 1 of \cite{APP2007}, for the case of a reflected L\'evy process.
\end{remark}

By taking $a$ to $\infty$ in identity (\ref{capital cost}), we obtain the following result.
\begin{corollary}
Assume $\psi'(0+) > -\infty$ and $q > 0$.
For any $x \in \R$, we have 
\begin{align*}
\mathbb{E}_x\left(\int_{[0,\infty)} e^{-qt}\ud R_t\right) 
&=-\tilde{r}^{(q)}(x) +\left(\int_b^\infty e^{-\varphi(q) (y-b)}Z^{(q)}(y)\ud y \right)\frac{r^{(q)}(x)} {q \int_b^\infty e^{-\varphi(q) (y-b)}W^{(q)}(y)\ud y}.
%&+\mathbb{W}^{(q)}(x-b)  \Big[ \frac{Z^{(q)}(b-a)-\mathbb{Z}^{(q)}(b-a)}{Z^{(q)}(b-a)}\delta\int_b^{\infty}e^{-\varphi(q)(y-b)}Z^{(q)}(y-a)dy\Big].
\end{align*}
\end{corollary}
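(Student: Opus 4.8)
The plan is to derive this identity by letting $a \uparrow \infty$ in the finite-horizon formula \eqref{capital cost} of Proposition \ref{prop_capital_injection}. Two separate things must be established: that the left-hand side $\mathbb{E}_x(\int_{[0,T_a^+]}e^{-qt}\ud R_t)$ converges to the infinite-horizon quantity, and that the $a$-dependent ratio $\tilde{r}^{(q)}(a)/r^{(q)}(a)$ on the right-hand side converges to the stated explicit limit.

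For the left-hand side, I would first note that $T_a^+$ is nondecreasing in $a$ and that $T_a^+ \uparrow \infty$ almost surely as $a \uparrow \infty$: indeed, $\sup_{0 \leq s \leq t}V_s < \infty$ for every fixed $t$, so eventually $a$ exceeds this supremum and $T_a^+ > t$. Since $R$ is nondecreasing and $e^{-qt} \geq 0$, the integral $\int_{[0,T_a^+]}e^{-qt}\ud R_t$ increases pointwise to $\int_{[0,\infty)}e^{-qt}\ud R_t$, and the monotone convergence theorem gives $\mathbb{E}_x(\int_{[0,T_a^+]}e^{-qt}\ud R_t) \uparrow \mathbb{E}_x(\int_{[0,\infty)}e^{-qt}\ud R_t)$.

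It then remains to compute $\lim_{a\uparrow\infty}\tilde{r}^{(q)}(a)/r^{(q)}(a)$, as the term $-\tilde{r}^{(q)}(x)$ and the factor $r^{(q)}(x)$ are constant in $a$. I would divide both $\tilde{r}^{(q)}(a)$ and $r^{(q)}(a)$ by $\mathbb{W}^{(q)}(a-b)$. Using \eqref{W_q_limit} and the asymptotics recorded in the proof of Corollary \ref{corollaryresol1}, the non-integral contributions $\overline{Z}^{(q)}(a)$, $\psi'(0+)/q$, and $Z^{(q)}(a)$ are of order at most $e^{\Phi(q)a}$, whereas $\mathbb{W}^{(q)}(a-b)$ grows like $e^{\varphi(q)(a-b)}$ with $\varphi(q)>\Phi(q)$ for $q>0$; these terms therefore vanish in the limit. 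For the integral terms I would exploit the monotonicity of $x \mapsto e^{-\varphi(q)x}\mathbb{W}^{(q)}(x)$ to obtain the domination $\mathbb{W}^{(q)}(a-y)/\mathbb{W}^{(q)}(a-b) \leq e^{-\varphi(q)(y-b)}$ for $y\geq b$, together with the pointwise limit $\mathbb{W}^{(q)}(a-y)/\mathbb{W}^{(q)}(a-b)\to e^{-\varphi(q)(y-b)}$; dominated convergence then gives $\delta\int_b^a\mathbb{W}^{(q)}(a-y)Z^{(q)}(y)\ud y/\mathbb{W}^{(q)}(a-b)\to \delta\int_b^\infty e^{-\varphi(q)(y-b)}Z^{(q)}(y)\ud y$, and likewise with $Z^{(q)}$ replaced by $qW^{(q)}$ for the denominator. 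Taking the ratio, the common factor $\delta$ cancels and
\[
\frac{\tilde{r}^{(q)}(a)}{r^{(q)}(a)} \xrightarrow{a\uparrow\infty} \frac{\int_b^\infty e^{-\varphi(q)(y-b)}Z^{(q)}(y)\ud y}{q\int_b^\infty e^{-\varphi(q)(y-b)}W^{(q)}(y)\ud y},
\]
which combined with the monotone-convergence step yields the stated formula.

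I expect the main obstacle to be the careful justification of the dominated-convergence step for the integral terms: one needs both the explicit uniform bound from the monotonicity of $e^{-\varphi(q)x}\mathbb{W}^{(q)}(x)$ and the integrability of $e^{-\varphi(q)(y-b)}Z^{(q)}(y)$ on $[b,\infty)$, the latter resting on the strict inequality $\varphi(q)>\Phi(q)$, which holds precisely because $q>0$ (and is where the hypothesis $q>0$ is genuinely used). The remaining verification that the lower-order terms are negligible against $\mathbb{W}^{(q)}(a-b)$ is routine once the exponential growth rates $\Phi(q)$ versus $\varphi(q)$ are compared.
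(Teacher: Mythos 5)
Your proposal is correct and follows essentially the same route as the paper: the paper also passes to the limit $a\uparrow\infty$ in \eqref{capital cost}, divides $\tilde{r}^{(q)}(a)$ and $r^{(q)}(a)$ by $\mathbb{W}^{(q)}(a-b)$, and applies dominated convergence using the bound $\mathbb{W}^{(q)}(a-y)/\mathbb{W}^{(q)}(a-b)\leq e^{-\varphi(q)(y-b)}$ from \eqref{W_q_limit}. You are in fact slightly more complete, since you also justify the convergence of the left-hand side via $T_a^+\uparrow\infty$ and monotone convergence, a step the paper leaves implicit.
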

\begin{proof}
We have, by \eqref{W_q_limit},
\begin{align*}
\lim_{a \rightarrow \infty} \frac{\tilde{r}^{(q)}(a)} {r^{(q)}(a)} &=\lim_{a \rightarrow \infty}\frac {\int_b^a\mathbb{W}^{(q)}(a-y)Z^{(q)}(y)\ud y }  {q \int_b^a\mathbb{W}^{(q)}(a-y)W^{(q)}(y)\ud y}.
%&+\lim_{c \rightarrow \infty}\frac{\mathbb{W}^{(q)}(x-b)}{\mathbb{W}^{(q)}(c-b)} \Big[ \frac{Z^{(q)}(b-a)-\mathbb{Z}^{(q)}(b-a)}{Z^{(q)}(b-a)}\left(R^{(q)}(c-a)+\delta\int_b^c\mathbb{W}^{(q)}(c-y)Z^{(q)}(y-a)dy \right)\Big].
\end{align*}
Here, because $\mathbb{W}^{(q)}(a-y)/\mathbb{W}^{(q)}(a-b) \leq  e^{-\varphi(q)(y-b)}$ by \eqref{W_q_limit}, dominated convergence gives
\begin{align*}
\lim_{a\to\infty}\mathbb{W}^{(q)}(a-b)^{-1}\int_b^a\mathbb{W}^{(q)}(a-y)Z^{(q)}(y)\ud y &=\int_b^{\infty}e^{-\varphi(q)(y-b)}Z^{(q)}(y)\ud y, \\
\lim_{a\to\infty}\mathbb{W}^{(q)}(a-b)^{-1}\int_b^a\mathbb{W}^{(q)}(a-y)W^{(q)}(y)\ud y &=\int_b^{\infty}e^{-\varphi(q)(y-b)}W^{(q)}(y)\ud y.
\end{align*}
This shows the claim.
%Hence,
%\begin{align*}
%\mathbb{E}_x\left(\int_0^\infty e^{-qt}\ud R_t\right) 
%&=-\tilde{r}^{(q)}(x) +\left(\int_b^\infty e^{-\varphi(q) (y-b)}Z^{(q)}(y)\ud y \right)\frac{r^{(q)}(x)} {q \int_b^\infty e^{-\varphi(q) (y-b)}W^{(q)}(y)\ud y}.
%%&+\mathbb{W}^{(q)}(x-b)  \Big[ \frac{Z^{(q)}(b-a)-\mathbb{Z}^{(q)}(b-a)}{Z^{(q)}(b-a)}\delta\int_b^{\infty}e^{-\varphi(q)(y-b)}Z^{(q)}(y-a)dy\Big].
%\end{align*}
\end{proof}

\section{Occupation times}  \label{section_occupation_time}
%\red{[my American advisor told me that we should use the present tense as much as possible. I think we can safely remove ``will"]}
In this section, we are interested in computing the occupation time of the process $V$ above and below the level of refraction $b$. Namely, for $a > 0$, we compute the joint Laplace transform of the stopping time $T_a^+$ and the following quantities: 
\[
\int_0^{T_a^+}1_{\{V_s<b\}}\ud s\qquad\text{and}\qquad \int_0^{T_a^+}1_{\{V_s> b\}}\ud s.
\]
Recall as in Corollary \ref{corollary_one_sided} that $T_a^+ < \infty$ a.s.
%\red{[Can we instead obtain the joint Laplace transform of $T_c^+$ and $\int_0^{T_c^+}1_{\{V_s\geq b\}}ds$ if it is not difficult?  IT seems that it has been done by Renaud for the refracted process.]}
%Now we state our first result: %\red{[It has been shown in the previous section that $T_c^+ < \infty$ a.s. and so I guess we can remove the indicator.]}
\begin{proposition}\label{occupation_time_below}  %\red{[Combined the two results.]}\blue{cool.}
For any $p\geq0$, $q\geq -p$, $a > 0$ and $x \leq a$,
\begin{align}
\E_x\left(e^{-pT_a^+-q\int_0^{T_a^+}1_{\{V_s<b\}}\ud s} \right)=\frac{\mathcal{R}^{(p,q)}(x)}{\mathcal{R}^{(p,q)}(a)}, \label{occupation_result_1} \\
\E_x\left(e^{-pT_a^+-q\int_0^{T_a^+}1_{\{V_s> b\}}\ud s} \right)=\frac{\mathcal{L}^{(p,q)}(x)}{\mathcal{L}^{(p,q)}(a)}, \label{occupation_result_2}
\end{align} 
where $\mathcal{R}^{(p,q)}$ is defined as in \eqref{mathcal_R_def} and \begin{align*}
\mathcal{L}^{(p,q)}(x)&:= \mathcal{R}^{(p+q,-q)}(x) \\ &= Z^{(p)}(x)+q\overline{\mathbb{W}}^{(p+q)}(x-b) +p\int_b^x\mathbb{W}^{(p+q)}(x-y)\left(q\overline{W}^{(p)}(y)+\delta W^{(p)}(y)\right)\ud y, \quad x \in \R.
\end{align*}
In particular, for $x \leq 0$, $\mathcal{R}^{(p,q)}(x) = \mathcal{L}^{(p,q)}(x) = 1$.
\end{proposition}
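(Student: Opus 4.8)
The plan is to establish \eqref{occupation_result_1} first in the bounded variation case by the same recursive scheme used for Theorem \ref{resol}, then to deduce \eqref{occupation_result_2} from it by a complementation argument, and finally to pass to the unbounded variation case by approximation. Throughout write $h^{(p,q)}(x,a) := \E_x(e^{-pT_a^+ - q\int_0^{T_a^+}1_{\{V_s<b\}}\ud s})$ for the left-hand side of \eqref{occupation_result_1}, and recall from Remark \ref{remark_connection_Y_U} that up to $T_b^+ = \kappa_b^+$ the process $V$ agrees with the reflected process $U$, while up to $T_b^- = \tau_b^-$ it agrees with the drift-changed process $Y$.

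Assume $X$ is of bounded variation. For $x < b$, the process spends all of $[0,\kappa_b^+)$ strictly below $b$, so the occupation clock runs at full rate and $e^{-pT_a^+-q\int_0^{T_a^+}1_{\{V_s<b\}}\ud s}$ factors at $\kappa_b^+$ as $e^{-(p+q)\kappa_b^+}$ times the same functional restarted at $V_{\kappa_b^+}=b$. Hence, by the strong Markov property and \eqref{upcrossing_time_reflected} (with index $p+q\ge0$),
\[
h^{(p,q)}(x,a)=\frac{Z^{(p+q)}(x)}{Z^{(p+q)}(b)}\,h^{(p,q)}(b,a),\qquad x<b.
\]
For $x\ge b$ the process moves as $Y$ until $\tau_b^-\wedge\tau_a^+$, accumulating no occupation below $b$; decomposing at this time and using Remark \ref{remark_connection_Y_U} (so that $V$ restarts at $Y_{\tau_b^-}<b$), the first identity in \eqref{laplace_in_terms_of_z}, and the display above for the restarted value, I obtain
\[
h^{(p,q)}(x,a)=\frac{\mathbb{W}^{(p)}(x-b)}{\mathbb{W}^{(p)}(a-b)}+\frac{h^{(p,q)}(b,a)}{Z^{(p+q)}(b)}\,\E_x\!\left(e^{-p\tau_b^-}Z^{(p+q)}(Y_{\tau_b^-})1_{\{\tau_b^-<\tau_a^+\}}\right).
\]
Now \eqref{cc0_general} evaluates the last expectation as $\mathcal{R}^{(p,q)}(x)-\mathcal{R}^{(p,q)}(a)\mathbb{W}^{(p)}(x-b)/\mathbb{W}^{(p)}(a-b)$. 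Setting $x=b$ and using $\mathbb{W}^{(p)}(0)=(c-\delta)^{-1}>0$ together with $\mathcal{R}^{(p,q)}(b)=Z^{(p+q)}(b)$ (immediate from \eqref{mathcal_R_def}) closes the recursion and yields $h^{(p,q)}(b,a)=Z^{(p+q)}(b)/\mathcal{R}^{(p,q)}(a)$. Substituting back into both displays and recalling that $\mathcal{R}^{(p,q)}(x)=Z^{(p+q)}(x)$ for $x<b$ gives \eqref{occupation_result_1} in the bounded variation case; the special value $\mathcal{R}^{(p,q)}(x)=1$ for $x\le0$ follows from \eqref{mathcal_R_def} and \eqref{z_below_zero}.

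For \eqref{occupation_result_2} I would argue by complementation. By Lemma \ref{lemma_mass_zero} the set $\{s:V_s=b\}$ is Lebesgue-null a.s., so $\int_0^{T_a^+}1_{\{V_s>b\}}\ud s=T_a^+-\int_0^{T_a^+}1_{\{V_s<b\}}\ud s$ and therefore $e^{-pT_a^+-q\int_0^{T_a^+}1_{\{V_s>b\}}\ud s}=e^{-(p+q)T_a^+-(-q)\int_0^{T_a^+}1_{\{V_s<b\}}\ud s}$. Thus the left-hand side of \eqref{occupation_result_2} equals $h^{(p+q,-q)}(x,a)$, which by the already-proven \eqref{occupation_result_1} equals $\mathcal{R}^{(p+q,-q)}(x)/\mathcal{R}^{(p+q,-q)}(a)=\mathcal{L}^{(p,q)}(x)/\mathcal{L}^{(p,q)}(a)$; one checks that the admissibility conditions for \eqref{occupation_result_1} with parameters $(p+q,-q)$, namely $p+q\ge0$ and $p\ge0$, are exactly the hypotheses $q\ge-p$ and $p\ge0$. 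The explicit formula for $\mathcal{L}^{(p,q)}$ is obtained by substituting $(p+q,-q)$ into \eqref{mathcal_R_def}.

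Finally, I pass to the unbounded variation case via a strongly approximating sequence $X^{(n)}$ as in Proposition \ref{prop_approximation} and Remark \ref{remark_strongly_approximating}. As in the proof of Theorem \ref{resol}, $T_{a,n}^+\to T_a^+$ a.s.\ and $V^{(n)}\to V$ uniformly on compacts; since $\{s:V_s=b\}$ is null (Lemma \ref{lemma_mass_zero}), the integrands $1_{\{V_s^{(n)}<b\}}$ converge to $1_{\{V_s<b\}}$ for a.e.\ $s$, so the occupation integrals converge. Because $p\ge0$ and $p+q\ge0$ keep the exponent nonpositive, bounded convergence yields convergence of the left-hand side of \eqref{occupation_result_1} for $X^{(n)}$ to that for $X$, while the right-hand side converges by the convergence of scale functions in Remark \ref{remark_strongly_approximating}; the same then holds for \eqref{occupation_result_2}. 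The main obstacle is precisely this last step, namely controlling the occupation-time functionals under the approximation, where the key input is the absence of time spent exactly at the level $b$ provided by Lemma \ref{lemma_mass_zero}.
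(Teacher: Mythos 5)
Your proposal is correct and follows essentially the same route as the paper: the identical strong Markov decomposition at $\kappa_b^+$ and $\tau_b^-\wedge\tau_a^+$ combined with \eqref{upcrossing_time_reflected}, \eqref{laplace_in_terms_of_z} and \eqref{cc0_general} to close the recursion at $x=b$, the same complementation via Lemma \ref{lemma_mass_zero} to deduce \eqref{occupation_result_2}, and the same approximation argument for unbounded variation. Your added checks (nonpositivity of the exponent for $q<0$, admissibility of the parameters $(p+q,-q)$) are correct and slightly more explicit than the paper's.
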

\begin{proof}
(i) We will show the result for the case in which $X$ is of bounded variation; the case of unbounded variation can be obtained by Remark \ref{remark_strongly_approximating} and dominated convergence. 

Let us define, for each $x \leq a$, $h^{(p,q)}(x)$  as the left hand side of  \eqref{occupation_result_1}.
First, consider the case $x<b$. By an application of the strong Markov property, Remark \ref{remark_connection_Y_U}, and \eqref{upcrossing_time_reflected},
\begin{align} \label{p_x_delta_below_b}
h^{(p,q)}(x,a)=\E_x\left[e^{-(p+q)\kappa_b^+}  \right] \E_b\left(e^{-pT_a^+-q\int_0^{T_a^+}1_{\{V_s<b\}}\ud s} \right) 
%=h^{(p,q)}(b, \delta)\E_x\left(e^{-(p+q)\kappa_b^+} \right)
 =h^{(p,q)}(b,a)\frac{Z^{(p+q)}(x)}{Z^{(p+q)}(b)}.
\end{align}
Second, for the case $x\geq b$, we obtain, again by the strong Markov property and Remark \ref{remark_connection_Y_U}, 
\begin{align} \label{p_x_delta_above_b}
\begin{split}
h^{(p,q)}(x,a)&=\E_x\left(e^{-p\tau_a^+} 1_{\{\tau_a^+< \tau_b^- \}}\right)+\E_x\left[e^{-p\tau_b^-}\E_{Y_{\tau_b^-}}\left(e^{-p T_a^+-q\int_0^{T_a^+}1_{\{V_s<b\}}\ud s} \right) 1_{\{\tau_b^-<\tau_a^+\}}\right]\\
&=\frac{\mathbb{W}^{(p)}(x-b)}{\mathbb{W}^{(p)}(a-b)}+\E_x\left(e^{-p\tau_b^-}h^{(p,q)}(Y_{\tau_b^-},a) 1_{\{\tau_b^-<\tau_a^+ \}}\right).
\end{split}
\end{align}
In order to compute the expectation on the right hand side, 
%by (\ref{lemma_useful_identity_c}), we get
%\begin{align}
%\int_0^{a-b}&\int_{-\infty}^{-y}Z^{(p+q)}(y+u+b)\left\{\frac{\mathbb{W}^{(p)}(x-b)\mathbb{W}^{(p)}(a-b-y)}{\mathbb{W}^{(p)}(a-b)}-\mathbb{W}^{(p)}(x-b-y)\right\}\Pi(\ud u)\ud y\notag\\
%&=\frac{\mathbb{W}^{(p)}(x-b)}{\mathbb{W}^{(p)}(a-b)}\Bigg((c-\delta) Z^{(p+q)}(b) \mathbb{W}^{(p)}(a-b)-Z^{(p+q)}(a)+q\overline{\mathbb{W}}^{(p)}(a-b)\notag\\
%&+(p+q)\int_b^a\mathbb{W}^{(p)}(a-y)\left(q\overline{W}^{(p+q)}(y)-\delta W^{(p+q)}(y)\right)\ud y\notag\\
%&-\Bigg((c-\delta) Z^{(p+q)}(b) \mathbb{W}^{(p)}(x-b)-Z^{(p+q)}(x)+q\overline{\mathbb{W}}^{(p)}(x-b)\notag\\
%&+(p+q)\int_b^x\mathbb{W}^{(p)}(x-y)\left(q\overline{W}^{(p+q)}(y)-\delta W^{(p+q)}(y)\right)\ud y\Bigg)\notag\\
%&=\mathcal{R}^{(p,q)}(x,b)-\mathcal{R}^{(p,q)}(a,b)\frac{\mathbb{W}^{(p)}(x-b)}{\mathbb{W}^{(p)}(a-b)}.\notag
%\end{align}
by \eqref{cc0_general}  and \eqref{p_x_delta_below_b},
\begin{align*} %\label{p_y_delta_overshoot_exp}
\begin{split}
\E_x\left(e^{-p\tau_b^-}h^{(p,q)}(Y_{\tau_b^-},a) 1_{\{\tau_b^-<\tau_a^+ \}}\right) &=\frac{h^{(p,q)}(b,a)}{Z^{(p+q)}(b)} \mathbb{E}_x\left(e^{-p\tau_b^-}Z^{(p+q)}(Y_{\tau_b^-})1_{\{\tau_b^-<\tau_a^+\}}\right) \\
%\\&=\int_0^{a-b}\int_{-\infty}^{-y}h^{(p,q)}(y+u+b)\left\{\frac{\mathbb{W}^{(p)}(x-b)\mathbb{W}^{(p)}(a-b-y)}{\mathbb{W}^{(p)}(a-b)}-\mathbb{W}^{(p)}(x-b-y)\right\}\Pi(\ud u)\ud y\\
%&=\frac{h^{(p,q)}(b)}{Z^{(p+q)}(b)}\int_0^{a-b}\int_{-\infty}^{-y}Z^{(p+q)}(y+u+b)\left\{\frac{\mathbb{W}^{(p)}(x-b)\mathbb{W}^{(p)}(a-b-y)}{\mathbb{W}^{(p)}(a-b)}-\mathbb{W}^{(p)}(x-b-y)\right\}\Pi(\ud u)\ud y \\
&= \frac{h^{(p,q)}(b,a)}{Z^{(p+q)}(b)} \Big( \mathcal{R}^{(p,q)}(x)-\mathcal{R}^{(p,q)}(a)\frac{\mathbb{W}^{(p)}(x-b)}{\mathbb{W}^{(p)}(a-b)} \Big).
\end{split}
\end{align*}
Substituting this in \eqref{p_x_delta_above_b} gives
\begin{align} \label{p_x_total}
\begin{split}
h^{(p,q)}(x,a)
&=\frac{\mathbb{W}^{(p)}(x-b)}{\mathbb{W}^{(p)}(a-b)}+\frac{h^{(p,q)}(b,a)}{Z^{(p+q)}(b)} \Big( \mathcal{R}^{(p,q)}(x)-\mathcal{R}^{(p,q)}(a)\frac{\mathbb{W}^{(p)}(x-b)}{\mathbb{W}^{(p)}(a-b)} \Big).\end{split}
\end{align}
Setting $x=b$ and solving for $h^{(p,q)}(b,a)$, we have
%
%multiplying both sides by $\mathbb{W}^{(p)}(a-b)/ \mathbb{W}^{(p)}(0) = (c-\delta) \mathbb{W}^{(p)}(a-b)$, by (\ref{lemma_useful_identity_c}),
%\begin{align}
%(c-\delta)\mathbb{W}^{(p)}(a-b) h^{(p,q)}(b)
%&=1 +\frac{h^{(p,q)}(b)}{Z^{(p+q)}(b)}\int_0^{a-b}\int_{-\infty}^{-y}Z^{(p+q)}(y+u+b)\left\{\mathbb{W}^{(p)}(a-b-y)\right\}\Pi(\ud u)\ud y\notag \\
%&=1\notag +\frac{h^{(p,q)}(b)}{Z^{(p+q)}(b)}  \Big[ \frac{Z^{(p+q)}(b)}{\mathbb{W}^{(p)}(0+)}\mathbb{W}^{(p)}(a-b)-Z^{(p+q)}(a)+q\overline{\mathbb{W}}^{(p)}(a-b)\notag\\
%&+(p+q)\int_b^a\mathbb{W}^{(p)}(x-y)\left(q \overline{W}^{(p+q)}(y)-\delta W^{(p+q)}(y)\right)\ud y \Big] \notag \\
%&=1+  (c-\delta)\mathbb{W}^{(p)}(a-b) h^{(p,q)}(b) + \frac{h^{(p,q)}(b)}{Z^{(p+q)}(b)} \Big[-Z^{(p+q)}(a)+q\overline{\mathbb{W}}^{(p)}(a-b)\notag\\
%&+(p+q)\int_b^a\mathbb{W}^{(p)}(x-y)\left(q \overline{W}^{(p+q)}(y)-\delta W^{(p+q)}(y)\right)\ud y \Big]. \notag
%\end{align}
%%In other words,
%%\begin{align}
%%-1=
%%& \frac{p(b,\delta)}{Z^{(p+q)}(b)} \Big[-Z^{(p+q)}(c)+q\overline{\mathbb{W}}^{(p)}(c-b) +(p+q)\int_b^c\mathbb{W}^{(p)}(x-y)\left(q \overline{W}^{(p+q)}(y)-\delta W^{(p+q)}(y)\right)\ud y \Big]. 
%%\end{align}
%%Hence, we obtain that
%Solving this for $h^{(p,q)}(b, \delta)$, we have
\begin{align*}
h^{(p,q)}(b,a)=\frac{Z^{(p+q)}(b)}{\mathcal{R}^{(p,q)}(a)}. %\label{p_b_delta_b}
\end{align*}
This together with \eqref{p_x_delta_below_b} and \eqref{p_x_total} completes the proof of \eqref{occupation_result_1}.

%\blue{In order to show for the case $x > b$, we substitute (\ref{p_b_delta_b}) in (\ref{p_x_total}) to obtain}

%it is left to evaluate the double integral in \eqref{p_y_delta_overshoot_exp}.
%and using identity (\ref{lemma_useful_identity_c}), we obtain that
%\begin{align}
%p(b,\delta)=\frac{Z^{(p+q)}(b-a)}{\mathcal{R}^{(p,q)}(c,a,b)},\notag
%\end{align}
%where
%\begin{align}
%\mathcal{R}^{(p,q)}(x,b)&=Z^{(p+q)}(x)-q\overline{\mathbb{W}}^{(p)}(x-b)-(p+q)\int_b^x\mathbb{W}^{(p)}(x-y)\left(q\overline{W}^{(p+q)}(y)-\delta W^{(p+q)}(y)\right)\ud y\notag.
%\end{align}

%This together with \eqref{p_x_delta_above_b}, \eqref{p_y_delta_overshoot_exp}, and \eqref{p_b_delta_b} gives for $x > b$
%\begin{align}
%h^{(p,q)}(x)&=\frac{\mathbb{W}^{(p)}(x-b)}{\mathbb{W}^{(p)}(a-b)}+\frac{1}{\mathcal{R}^{(p,q)}(a,b)}\left(\mathcal{R}^{(p,q)}(x,b)-\mathcal{R}^{(p,q)}(a,b)\frac{\mathbb{W}^{(p)}(x-b)}{\mathbb{W}^{(p)}(a-b)}\right)=\frac{\mathcal{R}^{(p,q)}(x,b)}{\mathcal{R}^{(p,q)}(a,b)},\notag
%\end{align}
%which shows  \eqref{occupation_result_1}.

(ii)  By Lemma \ref{lemma_mass_zero} it is clear that $\int_0^{T_a^+}1_{\{V_s= b\}}\ud s = 0$ a.s. Therefore
\begin{align}
\E_x\left(e^{-pT_a^+-q\int_0^{T_a^+}1_{\{V_s>b\}}\ud s}\right)=\E_x\left(e^{-(p+q)T_a^++q\int_0^{T_a^+}1_{\{V_s< b\}}\ud s} \right), \notag
\end{align} 
 and hence the equation \eqref{occupation_result_2} follows directly from (\ref{occupation_result_1}).
\end{proof}
%\blue{   
%As a consequence of the previous result we can compute the case which involves the occupation time of the process $V$ above the level $b$,
%\begin{corollary}
%For any $p\geq0$, $q\geq -p$, and $x \leq a$ we have that
%%\red{say for $x < 0$}
%where
%\begin{align}
%\mathcal{L}^{(p,q)}(x,b)&:=Z^{(p)}(x)+q\overline{\mathbb{W}}^{(p+q)}(x-b) +p\int_b^x\mathbb{W}^{(p+q)}(x-y)\left(q\overline{W}^{(p)}(y)+\delta W^{(p)}(y)\right)\ud y\notag.
%\end{align}
%%\red{[The signs may be wrong: does not seem consistent with $\mathcal{R}$.]}
%%\blue{[I checked them again Kazu, and I didn't find a mistake, the thing is that the signs change because the interchange between the roles of p and q  in Lemmma 4.1].} \red{OK.}
%\end{corollary}
%\begin{proof}
%We first note that the following identity holds
%\end{proof}}

By setting $\delta = 0$, we can obtain the same identity for the reflected process $U$.
\begin{corollary} %\red{[Combined the two results.]}\blue{cool.}
For any $p\geq0$, $q\geq -p$, $a, b > 0$ and $x \leq a$,
\begin{align*}
\E_x\left(e^{-p\kappa_a^+-q\int_0^{\kappa_a^+}1_{\{U_s<b\}}\ud s} \right)&=\frac{Z^{(p+q)}(x)-q\overline{W}^{(p)}(x-b)-(p+q) q\int_b^xW^{(p)}(x-y)\overline{W}^{(p+q)}(y)\ud y}{Z^{(p+q)}(a)-q\overline{W}^{(p)}(a-b)-(p+q) q\int_b^aW^{(p)}(a-y)\overline{W}^{(p+q)}(y)\ud y},  \\
\E_x\left(e^{-p\kappa_a^+-q\int_0^{\kappa_a^+}1_{\{U_s>  b\}}\ud s} \right)&=\frac{Z^{(p)}(x)+q\overline{W}^{(p+q)}(x-b) +pq\int_b^x W^{(p+q)}(x-y)\overline{W}^{(p)}(y) \ud y}{Z^{(p)}(a)+q\overline{W}^{(p+q)}(a-b) +pq\int_b^a W^{(p+q)}(a-y)\overline{W}^{(p)}(y) \ud y}.
\end{align*} 
\end{corollary}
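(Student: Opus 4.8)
The plan is to obtain this as a direct specialization of Proposition \ref{occupation_time_below} to the degenerate case $\delta = 0$. The key structural observation is that when $\delta = 0$, the refracted-reflected process $V$ coincides with the reflected process $U$ of \eqref{reflected_levy}. Indeed, inspecting the construction in Section \ref{RRLPD}, the refracted \lev process $\widetilde{A}$ appearing in \textbf{Step 1} is the unique strong solution to \eqref{SDE} with refraction rate $\delta = 0$, which is simply $X$ itself; consequently no drift is ever subtracted, the upper level $b$ plays no role in the dynamics, and the recursion produces exactly the process reflected at $0$. Hence $V_t = U_t$ for all $t \geq 0$ and, in particular, $T_a^+ = \kappa_a^+$ almost surely.

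With this identification in hand, I would then note that $\delta = 0$ forces $Y_t = X_t - \delta t = X_t$, so that the scale functions of the drift-changed process collapse onto those of $X$: $\mathbb{W}^{(q)} = W^{(q)}$ and therefore $\overline{\mathbb{W}}^{(q)} = \overline{W}^{(q)}$ for every $q \geq 0$. It then remains to substitute these identities, together with $\delta = 0$, into the definitions of $\mathcal{R}^{(p,q)}$ in \eqref{mathcal_R_def} and of $\mathcal{L}^{(p,q)} = \mathcal{R}^{(p+q,-q)}$ from Proposition \ref{occupation_time_below}. In the expression for $\mathcal{R}^{(p,q)}$ the term carrying $\delta W^{(p+q)}$ drops out and the remaining integrand simplifies to $(p+q)q\, W^{(p)}(x-y)\overline{W}^{(p+q)}(y)$, yielding precisely the numerator of the first claimed identity; an entirely parallel substitution into $\mathcal{L}^{(p,q)}$ removes the $\delta W^{(p)}$ term and produces the numerator of the second. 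The ratios $\mathcal{R}^{(p,q)}(x)/\mathcal{R}^{(p,q)}(a)$ and $\mathcal{L}^{(p,q)}(x)/\mathcal{L}^{(p,q)}(a)$ then read off exactly as the two stated formulas.

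I do not anticipate a genuine obstacle here, since the argument is a verification rather than a new computation; the only point requiring care is the justification that $V = U$ when $\delta = 0$, which must be read off cleanly from the recursive construction and is the single substantive claim underlying the corollary. Everything else is the routine algebraic cancellation described above, performed separately for the below-$b$ and above-$b$ occupation functionals.
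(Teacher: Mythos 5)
Your proposal is correct and is exactly the paper's argument: the corollary is stated as the $\delta=0$ specialization of Proposition \ref{occupation_time_below}, under which $V=U$, $\mathbb{W}^{(q)}=W^{(q)}$, and the $\delta$-terms in $\mathcal{R}^{(p,q)}$ and $\mathcal{L}^{(p,q)}$ drop out to give the stated numerators. No further comparison is needed.
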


\appendix

\section{Proofs}

\subsection{Proof of Proposition \ref{prop_approximation}} \label{proof_prop_approximation}
We shall first show the following lemma.
\begin{lemma} \label{lemma_piecewise_bound}
%Fix  almost every $\omega \in \Omega \backslash \Omega_0$ where $\Omega_0$ is the exceptional set. Define $x_t := X_t(\omega)$

Fix $t > 0$. Let $(x_s; 0 \leq s \leq t )$ and $(\tilde{x}_s; 0 \leq s \leq t )$ be the paths of two different \lev processes such that, for some $\varepsilon > 0$, 
\begin{align*}
\sup_{0 \leq s \leq t} |x_s - \tilde{x}_s| < \varepsilon.
\end{align*}
Fix $z, \tilde{z} \in \R$ and $0 \leq t_0 < t$.

(i)  Define reflected paths $u_s(z, t_0)$ and $\tilde{u}_s(z, t_0)$ on $[t_0, t]$ of the shifted paths $(z + (x_s - x_{t_0});  t_0 \leq s \leq t  )$ and $(\tilde{z} + (\tilde{x}_{s} - \tilde{x}_{t_0});  t_0 \leq s \leq t )$, respectively. In other words, for all $t_0 \leq s \leq t$, let
\begin{align*}
u_s(z, t_0) &:= z + (x_s - x_{t_0}) + ( - \inf_{t_0 \leq u \leq s}  [z + (x_u - x_{t_0})]) \vee 0, \\
\tilde{u}_s(\tilde{z}, t_0) &:= \tilde{z} + (\tilde{x}_s - \tilde{x}_{t_0}) + ( - \inf_{t_0 \leq u \leq s}  [\tilde{z} + (\tilde{x}_u - \tilde{x}_{t_0})]) \vee 0.
%= x_0 + (X_s - X_{\overline{T}_1}) +a + ( - \inf_{\overline{T}_1 \leq u \leq s}  [x_0 + (X_u - X_{\overline{T}_1})]) \vee (-a).
\end{align*}
Then, 
\begin{align*}
\sup_{t_0 \leq s \leq t}|u_s(z, t_0) - \tilde{u}_s(\tilde{z}, t_0)| < 2|z - \tilde{z} | + 4 \varepsilon.
\end{align*}

(ii) Similarly, define refracted paths $a_s (z; t_0)$ and $\tilde{a}_s (\tilde{z}; t_0)$ on $[t_0, t]$ that solve, for all $t_0 \leq s \leq t$,
\begin{align*}
a_s (z, t_0) &= z + (x_s - x_{t_0})   - \delta \int_{t_0}^{s} 1_{\{a_u (z, t_0)> b\}} \ud u, \\
\tilde{a}_s (\tilde{z}, t_0) &= \tilde{z} + (\tilde{x}_s - \tilde{x}_{t_0})   - \delta \int_{t_0}^{s} 1_{\{\tilde{a}_u (\tilde{z}, t_0)> b\}} \ud u.
\end{align*}
Then, 
\begin{align*}
\sup_{t_0 \leq s \leq t}|a_s(z, t_0) - \tilde{a}_s(\tilde{z}, t_0)| < 2|z - \tilde{z} | + 4 \varepsilon.
\end{align*}
\end{lemma}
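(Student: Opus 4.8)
The plan is to reduce both parts to a single estimate on the \emph{free} (unmodified) difference and then to control the two regulator terms separately. Write $\Delta_s := (z-\tilde z) + (x_s - x_{t_0}) - (\tilde x_s - \tilde x_{t_0})$ for $t_0 \le s \le t$; this is exactly the difference of the two shifted driving paths \emph{before} any reflection or refraction is imposed, so both $u_s-\tilde u_s$ and $a_s-\tilde a_s$ will be $\Delta_s$ plus a difference of regulators. From the hypothesis $\sup_{0\le s\le t}|x_s-\tilde x_s|<\varepsilon$ I get $|\Delta_s| \le |z-\tilde z| + |x_s-\tilde x_s| + |x_{t_0}-\tilde x_{t_0}|$, so $M := \sup_{t_0\le s\le t}|\Delta_s| < |z-\tilde z| + 2\varepsilon$. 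In both parts I will in fact establish the slightly stronger bound $2M$, which lies strictly below the claimed $2|z-\tilde z|+4\varepsilon$.

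For (i), I would observe that $u_s(z,t_0) = \eta_s + (-\inf_{t_0\le r\le s}\eta_r)\vee 0$ with $\eta_s := z + (x_s - x_{t_0})$, and likewise for the tilde path, and that $\eta_s - \tilde\eta_s = \Delta_s$. Since the running-infimum map and the map $w \mapsto (-w)\vee 0$ are each $1$-Lipschitz for the supremum norm, the two regulator terms differ by at most $\sup_{t_0\le r\le s}|\eta_r - \tilde\eta_r| = \sup_{t_0\le r\le s}|\Delta_r|\le M$. The triangle inequality then gives $|u_s(z,t_0)-\tilde u_s(\tilde z,t_0)| \le |\Delta_s| + M \le 2M$ for every $s$, which is the assertion. (Existence of the refracted paths $a_s,\tilde a_s$ solving the stated integral equations is, of course, guaranteed by the construction of \cite{KL} under $(\mathbf{H})$, so I would note this at the outset.)

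Part (ii) is where the real work lies. Setting $D_s := a_s(z,t_0) - \tilde a_s(\tilde z,t_0)$, the defining equations give $D_s = \Delta_s - G_s$ with $G_s := \delta\int_{t_0}^s \big(1_{\{a_r>b\}} - 1_{\{\tilde a_r > b\}}\big)\,\ud r$. The crucial structural point is a \emph{sign match}: if $D_r>0$ then $a_r>\tilde a_r$, so $\tilde a_r>b$ forces $a_r>b$ and hence $1_{\{a_r>b\}}\ge 1_{\{\tilde a_r>b\}}$; symmetrically for $D_r<0$. Thus the integrand of $G$ has the same sign as $D_r$, i.e.\ refraction is a \emph{restoring force}: the continuous, Lipschitz function $G$ is nondecreasing on $\{D>0\}$ and nonincreasing on $\{D<0\}$. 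To bound $D_s$ from above, fix $s$ with $D_s>0$. If $D>0$ throughout $[t_0,s]$ then $G_s\ge G_{t_0}=0$, so $D_s\le \Delta_s\le M$. Otherwise let $\tau:=\sup\{r\in[t_0,s]: D_r\le 0\}<s$; on $(\tau,s]$ one has $D>0$, whence $G_s\ge G_\tau$ and therefore $D_s \le \Delta_s - G_\tau = (\Delta_s-\Delta_\tau) + D_\tau \le 2M$ when $D_\tau\le0$. The lower bound follows by exchanging the roles of the two processes, giving $\sup_s|D_s|\le 2M<2|z-\tilde z|+4\varepsilon$.

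I expect the main obstacle to be the bookkeeping at the exit time $\tau$. Because refraction adds only an absolutely continuous drift, $D$ inherits every jump of $\Delta$, so $\tau$ need not satisfy $D_\tau\le0$ in value: a jump may push $D$ from a nonpositive left limit to a positive value. The clean way around this is to work with $D_{\tau-}$ and use the continuity of $G$ (so $G_\tau=G_{\tau-}$ and $G_s\ge G_{\tau-}$), which yields $D_s \le (\Delta_s-\Delta_{\tau-}) + D_{\tau-}\le 2M$ with $D_{\tau-}\le 0$. Once this jump subtlety is dispatched, the remainder is just the triangle inequality combined with the sign-matching monotonicity of the refraction term, so the only genuinely delicate step is verifying the behaviour of $G$ across the last passage of $D$ through $0$.
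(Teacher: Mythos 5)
Your proof is correct. Part (i) is essentially the paper's argument: bound the free (pre-regulation) difference by $|z-\tilde z|+2\varepsilon$ via the triangle inequality, then use that the Skorokhod regulator $\eta\mapsto(-\inf\eta)\vee 0$ is $1$-Lipschitz in the supremum norm, giving the factor $2$. For part (ii) the paper does not argue at all: it simply invokes equation (3.8) in the proof of Lemma 12 of Kyprianou and Loeffen \cite{KL}, which is precisely the inequality $\sup_s|a_s-\tilde a_s|\le 2\sup_s|\eta_s-\tilde\eta_s|$ for refracted paths, and then concludes as in (i). You instead supply a self-contained proof of that inequality via the sign-matching observation that the refraction term acts as a restoring force (the integrand of $G$ has the sign of $D$), combined with a last-exit-time argument from $\{D\le 0\}$; your handling of the jump at $\tau$ through the left limit $D_{\tau-}\le 0$ and the continuity of $G$ is exactly the point that needs care, and you dispatch it correctly. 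What your route buys is independence from the external reference (and it makes transparent why the constant is $2$ and why no Gronwall-type growth in $\delta$ or $t$ appears); what the paper's route buys is brevity, since the identical estimate is already on record in \cite{KL}. Your additional remark that the argument in fact yields the sharper bound $2M\le 2|z-\tilde z|+4\sup_{0\le s\le t}|x_s-\tilde x_s|$ is also consistent with how the lemma is subsequently applied in the induction of Claim \ref{claim_induction}.
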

\begin{proof}
(i) Regarding the distance between the shifted paths, the triangle inequality gives
%$\{z + (x_{u + t_0} - x_{t_0});  0 \leq u \leq t-t_0 \}$ and $\{\tilde{z} + (\tilde{x}_{u + t_0} - \tilde{x}_{t_0});  0 \leq u \leq t-t_0 \}$, wa have
\begin{align} \label{bound_with_tilde}
\sup_{t_0 \leq s \leq t}|[z + (x_{s } - x_{t_0})] - [\tilde{z} + (\tilde{x}_{s } - \tilde{x}_{t_0})]| \leq |z - \tilde{z} | + \sup_{t_0 \leq s \leq t}|x_s - \tilde{x}_s| + |x_{t_0} - \tilde{x}_{t_0}| < |z - \tilde{z} | + 2 \varepsilon.
\end{align}
It can be easily verified that
\begin{multline*}
\Big| \Big( - \inf_{t_0 \leq u \leq s}  [z + (x_u - x_{t_0})] \Big) \vee 0 - \Big( - \inf_{t_0 \leq u \leq s}  [\tilde{z} + (\tilde{x}_u - \tilde{x}_{t_0})] \Big) \vee 0 \Big| \leq
\sup_{t_0 \leq s \leq t}|[z + (x_s - x_{t_0})] - [\tilde{z} + (\tilde{x}_s - \tilde{x}_{t_0})]|.\end{multline*}
Hence  \eqref{bound_with_tilde} and  the triangle inequality gives the result.

(ii) 
As in the proof of Lemma 12 of \cite{KL} (in particular the equation (3.8)),  %\red{[let us double check this]}
\begin{align*}
\sup_{t_0 \leq s \leq t} |a_s(z, t_0) - \tilde{a}_s(\tilde{z}, t_0)| \leq 2 \sup_{t_0 \leq s \leq t}|[z + (x_{s } - x_{t_0})] - [\tilde{z} + (\tilde{x}_{s } - \tilde{x}_{t_0})]|.
\end{align*}
Hence,  \eqref{bound_with_tilde} shows the result.

\end{proof}

We shall now use this lemma to show the proposition.
With $\beta := b/2 > 0$, 
let us define a sequence of increasing random times $(\underline{T}_1, \overline{T}_1, \underline{T}_2, \overline{T}_2, \ldots )$ as follows:
\begin{align*}
&\underline{T}_1 := \inf \{ s > 0: V_s >  \beta\},  \quad \overline{T}_1 := \sup \{ s < \sigma_1: V_s >  \beta\}, \quad \textrm{ with } \sigma_1 := \inf \{ s > \underline{T}_1: V_s =  0\}, 
\end{align*}
%&\underline{T}_2 := \inf \{ s > \sigma_1: V_s >  c\},  \quad \overline{T}_2 := \sup \{ s < \sigma_2: V_s >  c\}, \quad \sigma_2 := \inf \{ s > \underline{T}_2: V_s <  a\}, \\
%&\ldots \\
and, for all $k \geq 2$, 
\begin{align*}
&\underline{T}_k := \inf \{ s > \sigma_{k-1}: V_s >  \beta \},  \quad \overline{T}_k := \sup \{ s < \sigma_k: V_s >  \beta \},\quad \textrm{ with }  \sigma_k := \inf \{ s > \underline{T}_k: V_s =  0\}.
\end{align*}
For convenience, we also let $\overline{T}_0 := 0$.

Let 
\begin{align*}
K := K_1 + K_2 + 1, \quad \textrm{with } \; K_1 := \sup \{ k \geq 0: \underline{T}_k  < t\} \textrm{ and } K_2 := \sup \{ k \geq 0: \overline{T}_k  < t\},
\end{align*}
be the number of times switching has occurred until time $t$ (plus one), and define
\begin{align}
\underline{\beta} := \min_{1 \leq k \leq K_1}  \inf_{s\in[\underline{T}_k,\overline{T}_k \wedge t)} V_s.\label{beta_underline}\end{align}
Then it is clear from the definitions of $\underline{T}$ and $\overline{T}$ (and $\sigma$) that $\underline{\beta} > 0$.

For the rest of the proof, fix $\omega  \in \Omega \backslash \Omega_0$ where $\Omega_0 := \{ \omega' : \sup_{0 \leq s \leq t} |X_s ( \omega' ) - X_s^{(n)} ( \omega' ) | \nrightarrow 0 \textrm{ or } K ( \omega' )= \infty \}$ is a null set.
It is sufficient to show that there exist a finite number $C$ and $\underline{n}  \in \mathbb{N}$ such that
\begin{align}
\sup_{0 \leq s \leq t} |V^{(n)}_s (\omega) - V_s (\omega)|  \leq C \sup_{0 \leq s \leq t} |X^{(n)}_s (\omega) - X_s (\omega)|, \quad n \geq \underline{n}. \label{bound_with_C}
\end{align}
% It is sufficient to show, for any $\delta > 0$, that there exists $n(\omega)$ such that $\sup_{0 \leq s \leq t} |V^{(n)}_s (\omega) - V_s (\omega)| < \delta$.
%\red{Define $\varepsilon$ as a function of $\delta$,} and $n$ be large enough that
In this proof, we choose $\underline{n}$
 large enough so that
\begin{align}
\sup_{m \geq \underline{n}}\sup_{0 \leq s \leq t} |X^{(m)}_s (\omega) - X_s (\omega)| < [4(2^{K(\omega)}-1)]^{-1} \underline{\beta} (\omega). \label{bound_sup_sup}
\end{align}
We will see in later arguments that, for all $n \geq \underline{n}$, this bound guarantees that $\underline{T} (\omega)$ and $\overline{T}(\omega)$ can act as switching times for both $V(\omega)$ and $V^{(n)}(\omega)$ (so that, on each interval $[\underline{T}_k(\omega), \overline{T}_k(\omega)]$ and $[\overline{T}_k(\omega), \underline{T}_{k+1}(\omega)]$, both $V(\omega)$ and $V^{(n)}(\omega)$ are refracted and reflected paths, respectively).

Let us fix $n \geq \underline{n}$ and
\begin{align*}
\varepsilon := \sup_{0 \leq s \leq t} |X^{(n)}_s (\omega) - X_s (\omega)|.
\end{align*}
Now we define a sequence $(\eta_k; 0 \leq k \leq K(\omega))$ such that $\eta_0 = 0$ and
\begin{align*}
\eta_{k+1} = 2 \eta_{k} + 4 \varepsilon, \quad k \geq 0.
\end{align*}
%\blue{If (\ref{c2}) is correct then take $\eta_{n+1} = 2 \eta_{n} + 6 \varepsilon$?}
The latter gives $\eta_{k+1} - \eta_k = 2 (\eta_k - \eta_{k-1})$, and hence $\eta_{k} - \eta_{k-1} = 2^{k-1} (\eta_1-\eta_0) =  2^{k+1} \varepsilon$.
Therefore, $\eta_k =  (\eta_1-\eta_0) + \cdots + (\eta_k-\eta_{k-1}) = 4(2^k-1) \varepsilon$, and by \eqref{bound_sup_sup}
%
%
%As $\eta_n > \eta_{n-1}$, we must have
%\begin{align*}
%\eta_n \leq 2^{n+2} \varepsilon.
%\end{align*}
%Here, we want
\begin{align}
\eta_1 <\eta_2 < \cdots < \eta_{K (\omega)} \leq 4(2^{K(\omega)}-1) \varepsilon < \underline{\beta}(\omega) \leq b/2. \label{bound_eta}
\end{align}
%Hence, we need to make sure
%\begin{align*}
% \varepsilon <  [8(2^{K(\omega)+2}-1)]^{-1} (b-a).
%\end{align*}
We will show that 
\begin{align} \label{diff_interval_bound}
\begin{split}
\sup_{\overline{T}_k (\omega)\leq s \leq \underline{T}_{k+1} (\omega) \wedge t }
|V_s (\omega)- V^{(n)}_s (\omega)|  &\leq \eta_{2k+1}, \quad 0 \leq k \leq K_2(\omega), \\
\sup_{\underline{T}_k (\omega)\leq s \leq \overline{T}_k (\omega) \wedge t}
|V_s (\omega)- V^{(n)}_s (\omega)|  &\leq \eta_{2k}, \quad 1 \leq k \leq K_1(\omega),
\end{split}
\end{align}
and hence \eqref{bound_with_C} holds with $C = 4(2^{K(\omega)}-1)$.
 Toward this end, we first show the following claims.

\begin{claim}  \label{claim_induction}
(i)  Fix $k \geq 0$. Suppose $\overline{T}_k (\omega) < t$ (or $k \leq K_2(\omega)$) and
\begin{align*}
|(V_{\overline{T}_k-} (\omega) + \Delta X_{\overline{T}_k} (\omega)) - (V_{\overline{T}_k-}^{(n)} (\omega)+  \Delta X_{\overline{T}_k}^{(n)} (\omega)) | &\leq  \eta, \end{align*}
and
\begin{align}
\tilde{\eta} := 2\eta + 4 \varepsilon < \underline{\beta} < b/2. \label{bound_eta_lemma}
\end{align}
Then, 
\begin{align*}
|V_s (\omega)- V^{(n)}_s (\omega)|  \leq \tilde{\eta}, \quad \overline{T}_k (\omega)\leq s \leq \underline{T}_{k+1} (\omega) \wedge t.
\end{align*}
%\red{[to be removed]where in particular, 
%\begin{align*}
%V^{(n)}_{\overline{T}_k (\omega)} > V_{\overline{T}_k (\omega)} - \tilde{\eta} \geq c - \tilde{\eta} > a. %\label{V_n_above_a}
%\end{align*}}

(ii) Fix $k \geq 1$.  Suppose $\underline{T}_k (\omega) < t$ (or $k \leq K_1(\omega)$) and
\begin{align} 
|V_{\underline{T}_k} (\omega)  - V^{(n)}_{\underline{T}_k} (\omega)  | \leq  \eta, \label{v_diff_previous_under}\end{align}
such that \eqref{bound_eta_lemma} holds.
%\begin{align}
%\tilde{\eta} := 2\eta + 4 \varepsilon < \underline{c}-a < (b-a)/2. \label{bound_eta_lemma_under}
%\end{align}
Then, 
\begin{align*}
|V_s  (\omega) - V_s^{(n)}  (\omega) | &\leq \tilde{\eta}, \quad \underline{T}_k(\omega) \leq s < \overline{T}_k(\omega) \wedge t, \\
%\end{align*}
%and
%\begin{align*}
|(V_{\overline{T}_k-} (\omega) + \Delta X_{\overline{T}_k} (\omega)) - (V_{\overline{T}_k-}^{(n)} (\omega)+  \Delta X_{\overline{T}_k}^{(n)} (\omega)) | &\leq  \tilde{\eta},  \quad  \textrm{if }  \overline{T}_k (\omega) < t. \end{align*}
%\red{ [to be removed]where in particular, 
%\begin{align*}
%(V_{\overline{T}_k-}^{(n)} + \Delta X_{\overline{T}_k}^{(n)}) < (V_{\overline{T}_k-} + \Delta X_{\overline{T}_k}) + \tilde{\eta} < c + \tilde{\eta} < b.
%\end{align*}}

\end{claim}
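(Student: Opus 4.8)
The plan is to read each part of the claim as a direct application of the matching item of Lemma~\ref{lemma_piecewise_bound}, after verifying that on the interval in question both $V(\omega)$ and $V^{(n)}(\omega)$ genuinely follow the same dynamics (reflected for (i), refracted for (ii)). Throughout I take $x=X(\omega)$, $\tilde{x}=X^{(n)}(\omega)$, so that $\sup_{0\leq s\leq t}|x_s-\tilde{x}_s|<\varepsilon$, and I note that the entry-value discrepancies assumed in the hypotheses are precisely the quantities $|z-\tilde{z}|$ appearing in Lemma~\ref{lemma_piecewise_bound}. Hence the output bound $2\eta+4\varepsilon=\tilde\eta$ will come for free once the correct reflected/refracted representations are in place.

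For part (i), on $[\overline{T}_k,\underline{T}_{k+1})$ the path $V$ stays strictly below $\beta=b/2$, so it is never refracted; moreover it coincides with the reflected path $u_s(z,\overline{T}_k)$ started from the post-transition value $z:=V_{\overline{T}_k-}+\Delta X_{\overline{T}_k}$, since below $b$ and above $0$ the motion is that of the free driver while reflection at $0$ is reproduced exactly by the formula for $u_s$. Writing $\tilde{u}_s:=\tilde{u}_s(\tilde{z},\overline{T}_k)$ with $\tilde{z}:=V^{(n)}_{\overline{T}_k-}+\Delta X^{(n)}_{\overline{T}_k}$, Lemma~\ref{lemma_piecewise_bound}(i) gives $\sup_{\overline{T}_k\leq s\leq \underline{T}_{k+1}\wedge t}|\tilde{u}_s-V_s|<2\eta+4\varepsilon=\tilde\eta$. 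I then invoke $\tilde\eta<b/2$ to conclude $\tilde{u}_s<V_s+\tilde\eta<b$ on the whole interval, so the reflected path for $X^{(n)}$ never reaches $b$; thus no refraction is triggered for $V^{(n)}$, we have $V^{(n)}_s=\tilde{u}_s$ there, and the claimed bound follows.

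For part (ii), on $[\underline{T}_k,\overline{T}_k)$ the path $V$ stays at or above $\underline\beta>0$ by the definition \eqref{beta_underline} and is therefore never reflected, coinciding with the refracted path $a_s(z,\underline{T}_k)$ started from $z:=V_{\underline{T}_k}$ (there is no jump at the continuous up-crossing $\underline{T}_k$, since $X$ has no positive jumps). With $\tilde{a}_s:=\tilde{a}_s(\tilde{z},\underline{T}_k)$ and $\tilde{z}:=V^{(n)}_{\underline{T}_k}$, Lemma~\ref{lemma_piecewise_bound}(ii) yields $\sup_{\underline{T}_k\leq s\leq\overline{T}_k\wedge t}|\tilde{a}_s-V_s|<\tilde\eta$; using $\tilde\eta<\underline\beta$ then forces $\tilde{a}_s>V_s-\tilde\eta\geq\underline\beta-\tilde\eta>0$, so the refracted path for $X^{(n)}$ never reaches $0$, no reflection is triggered, and $V^{(n)}_s=\tilde{a}_s$ on $[\underline{T}_k,\overline{T}_k)$. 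For the final inequality I identify $V_{\overline{T}_k-}+\Delta X_{\overline{T}_k}$ with the refracted path evaluated at $\overline{T}_k$: the drift-subtraction term is continuous, so the only jump is $\Delta X_{\overline{T}_k}$, giving $a_{\overline{T}_k}=V_{\overline{T}_k-}+\Delta X_{\overline{T}_k}$ and similarly for the tilde process; the endpoint case $s=\overline{T}_k$ of the same Lemma~\ref{lemma_piecewise_bound}(ii) bound then produces the stated inequality.

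The main obstacle is the self-consistency of the regime on each interval for the approximating process $V^{(n)}$: a priori $V^{(n)}$ only follows reflected (resp.\ refracted) dynamics until it crosses into the other region, so I must rule out such crossings. This is exactly where the standing smallness $\tilde\eta<\underline\beta<b/2$ is essential, as it upgrades the comparison bound of Lemma~\ref{lemma_piecewise_bound} into the strict separation ($V^{(n)}<b$ on reflection intervals, $V^{(n)}>0$ on refraction intervals) that keeps $V^{(n)}$ in the same regime as $V$, so that $\overline{T}_k,\underline{T}_{k+1}$ and $\underline{T}_k,\overline{T}_k$ act as switching times for $V^{(n)}$ as well. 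The other delicate point is the matching of post-jump values at $\overline{T}_k$, which is why the hypothesis of (i) and the last conclusion of (ii) are phrased through $V_{\cdot-}+\Delta X_{\cdot}$ rather than $V_{\cdot}$.
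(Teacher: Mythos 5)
Your proposal is correct and follows essentially the same route as the paper: both parts are obtained by applying Lemma \ref{lemma_piecewise_bound} (i) and (ii) to the reflected and refracted path representations started from the post-transition values $V_{\overline{T}_k-}+\Delta X_{\overline{T}_k}$ and $V_{\underline{T}_k}$, and then using $\tilde\eta<\underline{\beta}<b/2$ to verify that $V^{(n)}$ never switches regime on the interval, so that it coincides with the comparison path. Your explicit treatment of the self-consistency (ruling out regime crossings for $V^{(n)}$) and of the jump at $\overline{T}_k$ matches what the paper does, just spelled out in slightly more detail.
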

\begin{proof}
(i) Consider the reflected paths on $[\overline{T}_k (\omega), \underline{T}_{k+1} (\omega) \wedge t]$: %$U_s (V_{\overline{T}_1-} + \Delta X_{\overline{T}_1}, \overline{T}_1) (\omega)$ and $U_s^{(n)} (V_{\overline{T}_1-}^{(n)}+ \Delta X^{(n)}_{\overline{T}_1}, \overline{T}_1) (\omega)$ that solve
\begin{multline} \label{U_time_space_shifted}
U_s (V_{\overline{T}_k-}+ \Delta X_{\overline{T}_k}, \overline{T}_k) (\omega)  := V_{\overline{T}_k-}(\omega) + \Delta X_{\overline{T}_k} (\omega)+ (X_s (\omega) - X_{\overline{T}_k} (\omega)) \\ + \Big( - \inf_{\overline{T}_k  (\omega) \leq u \leq s}  [V_{\overline{T}_k-} (\omega)+ \Delta X_{\overline{T}_k}(\omega)+ (X_u (\omega)- X_{\overline{T}_k} (\omega))] \Big) \vee 0, 
\end{multline}
and 
\begin{multline} \label{U_n_time_space_shifted}
U_s^{(n)} (V_{\overline{T}_k-}^{(n)}+ \Delta X_{\overline{T}_k}^{(n)}, \overline{T}_k) (\omega) := V_{\overline{T}_k-}^{(n)} (\omega)+ \Delta X_{\overline{T}_k}^{(n)}(\omega)+ (X^{(n)}_s (\omega)- X^{(n)}_{\overline{T}_k} (\omega)) \\+ \Big( - \inf_{\overline{T}_k (\omega) \leq u \leq s}  [V_{\overline{T}_k-}^{(n)}(\omega) + \Delta X_{\overline{T}_k}^{(n)}(\omega)+ (X^{(n)}_u(\omega) - X^{(n)}_{\overline{T}_k} (\omega))] \Big) \vee 0.
%= x_0 + (X_s - X_{\overline{T}_1}) +a + ( - \inf_{\overline{T}_1 \leq u \leq s}  [x_0 + (X_u - X_{\overline{T}_1})]) \vee (-a).
\end{multline}
By an application of Lemma \ref{lemma_piecewise_bound} (i) with $z = V_{\overline{T}_k-} (\omega)+ \Delta X_{\overline{T}_k}  (\omega)$ and $\tilde{z} = V_{\overline{T}_k-}^{(n)}  (\omega) + \Delta X_{\overline{T}_k}^{(n)}  (\omega)$ and $t_0=\overline{T}_k (\omega)$, we have
\begin{multline*}
\big|U_s (V_{\overline{T}_k-}+ \Delta X_{\overline{T}_k}, \overline{T}_k) (\omega)- U_s^{(n)} (V_{\overline{T}_k-}^{(n)}+ \Delta X_{\overline{T}_k}^{(n)}, \overline{T}_k) (\omega) \big| \\ < 2 \big|[V_{\overline{T}_k-} (\omega)+ \Delta X_{\overline{T}_k}  (\omega) ] - [V_{\overline{T}_k-}^{(n)}  (\omega) + \Delta X_{\overline{T}_k}^{(n)}  (\omega)] \big| + 4 \varepsilon \leq 2 \eta + 4\varepsilon = \tilde{\eta}, \quad \overline{T}_k (\omega) \leq s \leq \underline{T}_{k+1} (\omega) \wedge t.
\end{multline*}

Using that $V_s (\omega) \leq  \beta$ on $[\overline{T}_k (\omega), \underline{T}_{k+1} (\omega)]$ and \eqref{bound_eta_lemma}, we can conclude that there is no refraction for the path $V^{(n)}(\omega)$ on $[\overline{T}_k (\omega), \underline{T}_{k+1} (\omega) \wedge t)$. Therefore $V  (\omega)$ and $V^{(n)}(\omega)$ coincide with their associated reflected paths, defined in \eqref{U_time_space_shifted} and \eqref{U_n_time_space_shifted}, respectively, on $[\overline{T}_k (\omega), \underline{T}_{k+1} (\omega) \wedge t]$, and hence the claim holds.
%\begin{align*}
%|V_s - V^{(n)}_s|  \leq \tilde{\eta}, \quad \overline{T}_k (\omega)\leq s \leq \underline{T}_k (\omega).
%\end{align*}

(ii) Consider the refracted paths $A_s (V_{\underline{T}_k}, \underline{T}_k) (\omega)$ and $A_s^{(n)} (V_{\underline{T}_k}^{(n)}, \underline{T}_k) (\omega)$ on $[\underline{T}_k  (\omega) , \overline{T}_k  (\omega)  \wedge t]$ that solve, for $\underline{T}_k(\omega) \leq s \leq \overline{T}_k  (\omega)  \wedge t$,
\begin{align*}
A_s (V_{\underline{T}_k}, \underline{T}_k) (\omega) &= V_{\underline{T}_k}(\omega) + (X_s (\omega)- X_{\underline{T}_k}(\omega))   - \delta \int_{\underline{T}_k(\omega)}^{s} 1_{\{A_u (V_{\underline{T}_k}, \underline{T}_k)(\omega) > b\}} \ud u,  \\
A_s^{(n)} (V^{(n)}_{\underline{T}_k}, \underline{T}_k) (\omega) &= V_{\underline{T}_k}^{(n)} (\omega) + (X_s^{(n)} (\omega)- X_{\underline{T}_k}^{(n)}(\omega))   - \delta \int_{\underline{T}_k(\omega)}^{s} 1_{\{A_u^{(n)} (V_{\underline{T}_k}^{(n)}, \underline{T}_k)(\omega) > b\}} \ud u.
\end{align*}
  By Lemma \ref{lemma_piecewise_bound} (ii) and \eqref{v_diff_previous_under},
\begin{align*}
|A_s(V_{\underline{T}_k}, \underline{T}_k)  (\omega)  - A^{(n)}_s(V_{\underline{T}_k  }^{(n)}, \underline{T}_k ) (\omega) | < 2|V_{\underline{T}_k} (\omega)  - V_{\underline{T}_k}^{(n)} (\omega) | + 4 \varepsilon \leq 2 \eta + 4\varepsilon = \tilde{\eta}, \quad \underline{T}_k (\omega) \leq s \leq \overline{T}_k (\omega) \wedge t. 
\end{align*}

By the above inequality, \eqref{beta_underline} and \eqref{bound_eta_lemma} ,
%\eqref{bound_sup_sup} \red{(we make sure that $c - 12 \varepsilon > a$ and $c + 12 \varepsilon < b$)}, 
there is no reflection for both $V$ and $V^{(n)}$ on $[\underline{T}_k(\omega), \overline{T}_k(\omega) \wedge t)$ and we have $A_s(V_{\underline{T}_k}, \underline{T}_k)  (\omega)  = V_{s-}  (\omega) + \Delta X_s (\omega)$ and $A_s^{(n)}(V_{\underline{T}_k}^{(n)}, \underline{T}_k)  (\omega) = V^{(n)}_{s-} (\omega)  + \Delta X^{(n)}_s (\omega) $  for all $\underline{T}_k (\omega) \leq s \leq \overline{T}_k (\omega) \wedge t$. This completes the proof.
%\red{[to be removed]
%\begin{align*}
%|V_s  (\omega) - V_s^{(n)}  (\omega) | &\leq \tilde{\eta}, \quad \underline{T}_k \leq s < \overline{T}_k, \\
%%\end{align*}
%%and
%%\begin{align*}
%|(V_{\overline{T}_k-} (\omega) + \Delta X_{\overline{T}_k} (\omega)) - (V_{\overline{T}_k-}^{(n)} (\omega)+  \Delta X_{\overline{T}_k}^{(n)} (\omega)) | &\leq  \tilde{\eta}. \end{align*}
%\blue{Kazu: Here it is not
%\begin{align}\label{c2}
%|(V_{\overline{T}_1-} (\omega) &+ \Delta X_{\overline{T}_1} (\omega)) - (V_{\overline{T}_1-}^{(n)} (\omega)+  \Delta X_{\overline{T}_1}^{(n)} (\omega)) |\notag\\
%&\leq |(V_{\overline{T}_1-} (\omega)- V_{\overline{T}_1-}^{(n)} (\omega)|+| X_{\overline{T}_1-} (\omega)- X^{(n)}_{\overline{T}_1-} (\omega)|+| X_{\overline{T}_1} (\omega)- X^{(n)}_{\overline{T}_1} (\omega)|\notag\\
%&\leq14\varepsilon.
%\end{align}
%Therefore make $\eta_2=14\varepsilon$?} \red{JL: I think we do not need to add anything. It is simply that $V_{t-}  + \Delta X_{t} = A_t$ even at $t = \overline{T}_1$.}\blue{Kazu: Ok. sorry, I agree}
%This and \eqref{bound_eta} also show that
%\begin{align*}
%(V_{\overline{T}_k-}^{(n)} + \Delta X_{\overline{T}_k}^{(n)}) < (V_{\overline{T}_k-} + \Delta X_{\overline{T}_k}) + \tilde{\eta} < c + \tilde{\eta} < b.
%\end{align*}}
\end{proof}

We are now ready to show \eqref{diff_interval_bound}  by mathematical induction.
The base case is clear by Claim \ref{claim_induction} (i) with $k=0$ and
\begin{align*}
|(V_{0-} (\omega) + \Delta X_0 (\omega)) - (V_{0-}^{(n)} (\omega)+  \Delta X_0^{(n)} (\omega)) | = |x-x| = 0 =\eta_0. \end{align*}
By applying repeatedly Claim \ref{claim_induction} (i) and (ii) one after the other (for $K(\omega)$ times), it is clear that \eqref{diff_interval_bound} holds, as desired.

\subsection{Proof of Lemma  \ref{lemma_useful_identity} (iii).} \label{proof_lemma_useful_identit}

\par Using the fact that $\overline{Z}^{(q)}(x)=x$ for $x\leq0$, we obtain 
\begin{align}
\int_0^{\infty}&\int_{(-\infty,-y)}\left(\overline{Z}^{(q)}(y+u+b-v) - (y+u)\right)\mathbb{W}^{(p)}(x-b-y)\Pi(\ud u)\ud y\notag\\
&=\int_0^{\infty}\int_{(-\infty,-y)}\left(\overline{Z}^{(q)}(y+u+b-v)- \overline{Z}^{(q)}(y+u+0)\right)\mathbb{W}^{(p)}(x-b-y)\Pi(\ud u)\ud y\notag\\
&=\int_v^b \int_0^{\infty}\int_{(-\infty,-y)}Z^{(q)}(y+u+b-z)\mathbb{W}^{(p)}(x-b-y)\Pi(\ud u)\ud y \ud z\notag
\end{align}
which, by \eqref{RLqp2} and \eqref{lemma_useful_identity_c}, equals
\begin{align}\label{Z_bar_y_u_W_integral}
\begin{split}
&\int_v^b \Big[ (c-\delta) {Z^{(q)}(b-z)}\mathbb{W}^{(p)}(x-b)-Z^{(q)}(x-z)-(p-q)\overline{\mathbb{W}}^{(p)}(x-b) \\
&\qquad +\int_b^x\mathbb{W}^{(p)}(x-y)\big((q-p)(Z^{(q)}(y-z)-1)-\delta Z^{(q)\prime}(y-z)\big) \ud y\Big]\ud z\\
&=(c-\delta)\overline{Z}^{(q)}(b-v){\mathbb{W}^{(p)}(x-b)}+\overline{Z}^{(q)}(x-b)-\overline{Z}^{(q)}(x-v) \\
&-\delta\int_b^x\mathbb{W}^{(p)}(x-y)\left(-Z^{(q)}(y-b)+Z^{(q)}(y-v)\right) \ud y\\
&+(q-p)\int_b^x\mathbb{W}^{(p)}(x-y)\left(-\overline{Z}^{(q)}(y-b)+\overline{Z}^{(q)}(y-v)\right) \ud y. \\
&=(c-\delta)\overline{Z}^{(q)}(b-v){\mathbb{W}^{(p)}(x-b)}- \overline{Z}^{(q)}(x-v) + \overline{\mathbb{Z}}^{(p)}(x-b)+\delta\overline{\mathbb{W}}^{(p)}(x-b)\\
&-\delta\int_b^x\mathbb{W}^{(p)}(x-y) Z^{(q)}(y-v)  \ud y +(q-p)\int_b^x\mathbb{W}^{(p)}(x-y) \overline{Z}^{(q)}(y-v) \ud y.
\end{split}
\end{align}
Now we will compute the first term in the right hand side of the previous identity. To this end, an application of 
%\begin{align}\label{mainiden4}
%\int_0^{\infty}\int_{-\infty}^{-y}(y+u)\mathbb{W}^{(p)}(x-b-y)\Pi(\ud u)\ud y=\frac{\mathbb{W}^{(p)}(x-b)}{\mathbb{W}^{(p)}(0+)}\E\left(e^{-p\tau_0^{-}}Y_{\tau_0^-};\tau_0^-<\tau_{x-b}^+\right).
%\end{align}
Lemma 3.1 in \cite{BKY} gives
\begin{align}
\E\left(e^{-p\tau_0^{-}}Y_{\tau_0^-} 1_{\{\tau_0^-<\tau_{x-b}^+ \}}\right)= \frac {\psi'_{Y}(0+)\overline{\mathbb{W}}^{(p)}(x-b)-\overline{\mathbb{Z}}^{(p)}(x-b) }{(c-\delta)\mathbb{W}^{(p)}(x-b)}; \notag
\end{align}
therefore by \eqref{undershoot_expectation} we obtain
\begin{align} \label{y_u_W_integral}
\begin{split}
\int_0^{\infty}\int_{(-\infty,-y)}(y+u)\mathbb{W}^{(p)}(x-b-y)\Pi(\ud u)\ud y&=(c-\delta)\mathbb{W}^{(p)}(x-b)\E\left(e^{-p\tau_0^{-}}Y_{\tau_0^-}1_{\{\tau_0^-<\tau_{x-b}^+\}}\right) \\
&=\psi'_Y(0+)\overline{\mathbb{W}}^{(p)}(x-b)-\overline{\mathbb{Z}}^{(p)}(x-b).
\end{split}
\end{align}
By summing up \eqref{Z_bar_y_u_W_integral} and \eqref{y_u_W_integral} and recalling that $\psi'(0+) = \psi_Y'(0+) + \delta$, we have \eqref{lemma_useful_identity_Z_bar}.

\end{document}